\documentclass[11pt]{amsart}
\usepackage{amssymb,amsmath,latexsym,enumerate, dsfont}
\usepackage{graphicx,verbatim,enumerate,%
ifpdf,mdwlist}
\usepackage{mathrsfs}
\usepackage{mathabx}
\usepackage{mathtools}
\usepackage{color}
\ifpdf
\usepackage{hyperref}
\else
\fi
\usepackage{graphicx}
\usepackage{cancel}
\usepackage{float}
\renewcommand{\phi}{\varphi}

\newtheorem{theorem}{Theorem}[section]
\newtheorem{fact}[theorem]{Fact}
\newtheorem{lemma}[theorem]{Lemma}
\newtheorem{sublemma}{Sublemma}

\newtheorem{corollary}[theorem]{Corollary}

\newtheorem{question}[theorem]{Question}
\newtheorem*{lemma*}{Lemma}
\newtheorem{defi}[theorem]{Definition}
\newenvironment{definition}{\begin{defi} \rm}{ \end{defi}}
\newtheorem{exa}[theorem]{Example}

\newtheorem{rem}[theorem]{Remark}
\newenvironment{remark}{\begin{rem} \rm}{ \end{rem}}

\DeclareMathOperator{\dom}{domain}
\DeclareMathOperator{\up}{Up}

\newcommand{\mathand}{\; \&  \;}

\newcommand{\smf}{\smallfrown}

\newcommand{\aaa}{\mathcal{A}}

\newcommand{\imp}{\rightarrow}

\newcommand{\meet}{\wedge}
\newcommand{\andd}{\wedge}

\newcommand{\Th}{\operatorname{Th}}
\newcommand{\IPC}{\operatorname{IPC}}
\newcommand{\CPC}{\operatorname{CPC}}
\newcommand{\JAN}{\operatorname{JAN}}
\newcommand{\Dym}{\mathscr{M}_{D}}

\newcommand{\Usl}{\mathbb{U}\textrm{sl}}
\DeclareMathOperator{\graph}{graph}

\newcommand{\cat}[2]{#1\widehat{\phantom{\alpha}}\!#2}

\newcommand{\MP}[1]{\ensuremath{\mathcal{#1}}}
\newcommand{\QA}[1]{(\forall{#1})\,}
\newcommand{\QE}[1]{(\exists{#1})\,}

\AtBeginDocument{%
   \def\MR#1{}
}

\title[Intuitionism and computing with partial information]{Intuitionism
and computing with partial information}

\author[H.~Ganchev]{Hristo Ganchev}
\address{Department of Mathematical Logic, Faculty of Mathematics and Computer Science, Sofia University, Sofia}
\email{\href{mailto:ganchev@fmi.uni-sofia.bg}{ganchev@fmi.uni-sofia.bg}}

\author[P.~Shafer]{Paul Shafer}
\address{School of Mathematics\\
University of Leeds\\
Leeds, LS2 9JT, United Kingdom}
\email{\href{mailto:p.e.shafer@leeds.ac.uk}{p.e.shafer@leeds.ac.uk}}
\thanks{The second author was supported by EPSRC Overseas Travel Grant EP/R006458/1.}

\author[T.~Slaman]{Theodore A. Slaman}
\address{Department of Mathematics, University of California, Berkeley, Berkeley, CA 94720, USA}
\email{\href{mailto:slaman@math.berkeley.edu}{slaman@math.berkeley.edu}}

\author[A.~Sorbi]{Andrea Sorbi}
\address{Dipartimento di Ingegneria Informatica e Scienze Matematiche\\
Universit\`a Degli Studi di Siena\\
I-53100 Siena, Italy}
\email{\href{mailto:andrea.sorbi@unisi.it}{andrea.sorbi@unisi.it}}
\thanks{The fourth author is a member of INDAM-GNSAGA}

\author[M.~Soskova]{Mariya I. Soskova}
\address{Department of Mathematics, University of Wisconsin\\
Madison, USA}
\email{\href{mailto:msoskova@math.wisc.edu}{msoskova@math.wisc.edu}}
\thanks{The fifth author was partially supported by NSF grant DMS-2451099.}

\keywords{Intuitionistic propositional logic, Dyment lattice, Dyment-Muchnik 
lattice, splitting class}

\subjclass[2010]{03B20, 03D30, 03G10, 06D20}

\begin{document}

\begin{abstract}
There exist initial segments of both the Dyment lattice and the 
Dyment–Muchnik lattice that yield Brouwer algebras modeling exactly the 
intuitionistic propositional calculus. For the Dyment–Muchnik lattice, this 
result is obtained by constructing a splitting class of enumeration 
degrees. In contrast, the full Dyment lattice and the full Dyment–Muchnik 
lattice model the intuitionistic propositional calculus plus the weak law 
of excluded middle. We also observe that certain naturally definable 
classes of enumeration degrees, which are downwards closed under 
enumeration reducibility, fail to form splitting classes.  
\end{abstract}

\maketitle

\section{Introduction}
Mass problems were introduced by Medvedev in \cite{Medvedev} to formalize 
Kolmogorov’s proposal of a calculus of problems as a semantics for 
constructive logics. The central idea is to identify “problems’’ with mass 
problems, where a \emph{mass problem} is a set of total functions from 
$\omega$ to $\omega$ (with $\omega$ denoting the set of natural numbers). The 
elements of a mass problem are viewed as its solutions. A mass problem is 
\emph{solvable} if it contains a computable element, i.e.\ if we can compute 
a solution to the corresponding problem. 

Given two mass problems $\MP{A}$ and $\MP{B}$, we say that $\MP{A}$ is 
\emph{Medvedev reducible} to $\MP{B}$ (written $\MP{A} \leq \MP{B}$) if there 
exists an oracle Turing machine that computes an element of $\MP{A}$ whenever 
it is supplied with an element of $\MP{B}$ as an oracle. Intuitively, this 
implies that we can solve $\MP{A}$ if we can solve $\MP{B}$. Under this 
reducibility, the solvable mass problems are the “easiest possible,’’ since 
they reduce to every other mass problem. 

Following Kolmogorov’s idea of interpreting logical connectives as operations 
on problems, observe that we can solve $\MP{A}$ \emph{or} $\MP{B}$ if and 
only if we can solve 
\[
  \MP{A} \wedge \MP{B}
    = \cat{0}{\MP{A}} \cup \cat{1}{\MP{B}},
\]
where $\cat{i}{\MP{C}} = \{ \cat{i}{f} : f \in \MP{C} \}$ and $\cat{i}{f}$ is 
the function defined by $\cat{i}{f}(0) = i$ and $\cat{i}{f}(x+1) = f(x)$. 
Likewise, we can solve $\MP{A}$ \emph{and} $\MP{B}$ if and only if we can 
solve 
\[
  \MP{A} \oplus \MP{B} = \{ f \oplus g : f \in \MP{A},\ g \in \MP{B} \},
\]
where $f \oplus g$ is defined by $f \oplus g(2x) = f(x)$ and $f \oplus 
g(2x+1) = g(x)$. One can further define 
\[
  \MP{A} \to \MP{B}
    = \left\{ \cat{e}{f} : (\forall g \in \MP{A})\,
        [\, \Phi_e(f \oplus g) \downarrow \in \MP{B} \,] \right\},
\]
where $\Phi_e$ is the $e$-th oracle Turing machine, and $\Phi_e(f \oplus g) 
\downarrow$ means that $\Phi_e(f \oplus g)$ is a total function. If this mass 
problem is solvable, then informally the statement “if $\MP{A}$ is solvable, 
then $\MP{B}$ is solvable’’ holds. Finally, one interprets the connective 
$\neg$ by 
\[
  \neg \MP{A} = \MP{A} \to \emptyset .
\]

Muchnik later proposed in \cite{Muchnik:Lattice} a weakening of Medvedev 
reducibility, called \emph{weak reducibility}, defined by 
\[
  \MP{A} \leq_w \MP{B}
    \quad\text{iff}\quad
  (\forall g \in \MP{B})(\exists f \in \MP{A})[ f \leq_T g ],
\]
where $\leq_T$ denotes Turing reducibility. This weaker reducibility induces 
the same interpretation of the propositional connectives $\land$ and $\lor$ 
on mass problems and the same notion of solvability. However, the Muchnik 
interpretation of $\to$ is 
\[
\MP{A} \to \MP{B} = \left\{f : (\forall g \in \MP{A})(\exists h 
\in \MP{B})\, [h \leq_T f \oplus g] \right\},
\]
which need not be equivalent to the Medvedev interpretation. Under these 
interpretations, a propositional formula is \emph{valid} in either the 
Medvedev or the Muchnik calculus of problems if, for every substitution of 
mass problems for propositional atoms, the resulting mass problem is 
solvable. It turns out that the valid formulas are precisely the theorems of 
the intermediate logic obtained by adding to the intuitionistic propositional 
calculus the weak law of excluded middle, i.e.\ the axiom scheme $\neg \alpha 
\lor \neg\neg \alpha$ (\cite{Medvedev, Sorbi:Embedding}). This logic is known 
as the \emph{Jankov logic}, or the \emph{logic of the weak law of the 
excluded middle}. We will call $JAN$ this logic after Jankov. 

For many years it was open whether there exists a mass problem $\MP{C}$ such 
that the mass problems reducible to $\MP{C}$ model exactly the intuitionistic 
propositional logic. For the Medvedev lattice this question was answered 
positively in \cite{Skvortsova:intuitionism}, and for the Muchnik lattice in 
\cite{Sorbi-Terwijn:factors}. Both results were later improved by Kuyper 
\cite{Kuyper-Muchnik, Kuyper-Medvedev}, who explicitly exhibited initial 
segments arising from “natural’’ (or at least “more natural’’) mass problems. 
By contrast, the proofs of Skvortsova and of Sorbi--Terwijn rely at some 
point on the classical theorem of Lachlan and Lebeuf \cite{Lachlan-Lebeuf}, 
which states that every countable upper semilattice with a least element is 
isomorphic to an initial segment of the Turing degrees. 

In both Medvedev’s and Muchnik’s approaches the emphasis is on total 
functions as solutions to problems and on Turing computability as the means 
of reducing mass problems to one another. In this paper we instead adopt the 
perspective of computability relative to \emph{partial} information. The most 
general and comprehensive framework for relative computability of partial 
functions is the model proposed by Myhill \cite{Myhill:Partial}, according to 
which a partial function $\phi$ is computable relative to a partial function 
$\psi$ (written $\phi \leq_e \psi$) if there exists an effective enumeration 
procedure that enumerates the graph of $\phi$ from \emph{any} enumeration of 
the graph of $\psi$. The precise meaning of “effective enumeration 
procedure’’ will be given later, when we introduce enumeration operators. 

(An alternative and suggestive formulation of relative computability of 
partial functions---see, e.g., \cite{McEvoy:Dissertation}---states that 
$\phi$ is computable by a nondeterministic partial-function-oracle Turing 
machine using $\psi$ as oracle; such a machine always loops when it queries 
an undefined value of the oracle.) 

Within this framework it is natural---following Dyment 
\cite{Dyment:Someproperties}---to identify informal problems with sets of 
partial functions from $\omega$ to $\omega$. Reducibility of mass problems 
then refers to relative computability of partial functions, and the 
“easiest’’ problems (those reducible to every other) are precisely those 
containing partial computable functions. 

\subsection{The results} 

In Section~\ref{sec:Dyment} we show that there exists an initial segment of 
the Dyment lattice such that the propositional formulas which are valid in 
this initial segment are exactly the theorems of the intuitionistic 
propositional calculus. For this investigation, most of the machinery and of 
the ingredients used in the proof will turn out to be almost straightforward 
modifications of the ones employed for the Medvedev lattice in 
\cite{Kuyper-Medvedev}, \cite{Skvortsova:intuitionism}, \ 
\cite{Sorbi:Embedding}. In addition to some intended simplifications, the 
most relevant exceptions are perhaps the proofs of Sublemma~\ref{sublem:1} 
and Sublemma~\ref{sublem:2} to Lemma~\ref{lem:can1}, which seem to be the 
only cases where the Dyment lattice demands a proof of its own. In 
Section~\ref{sec:Dyment-Muchnik} we show that there exists an initial segment 
of the Dyment-Muchnik lattice modeling again exactly the intuitionistic 
propositional calculus. For this result, the known proof for the Muchnik 
lattice in \cite{Sorbi-Terwijn:factors} is of no help, since it relies on the 
Lachlan and Lebeuf result on the initial segments of the Turing degrees, 
which does not hold in the enumeration degrees. We do this by exhibiting a 
splitting class in the enumeration degrees, not an easy task in fact, since 
the known examples of splitting classes in the Turing degrees do not seem to 
be easily exportable to the context of enumeration reducibility, as 
exemplified in Subsection~\ref{ssct:c}. It follows from these results 
(Section~\ref{sec:total}) that both the full Dyment lattice and the full 
Dyment-Muchnik lattice model the Jankov logic $\JAN$. 

\section{Background and notations}

In this section we fix some of the most used notions and notations.

\subsection{Computability theory}\label{ssct:comp}

The reader is referred to any standard textbook on computability theory (see, 
for instance, \cite{Cooper:Book, Rogers:Book, Soare:Book}) for the basic 
background. Recall that an \emph{enumeration operator} (or 
\emph{$e$-operator}) is a mapping $\Phi: 2^{\omega} \to 2^{\omega}$ for which 
there exists a computably enumerable (c.e.) set $W$ (said to \emph{define} 
$\Phi$) such that, for $A \subseteq \omega$, 
\[
\Phi(A)=\{x: (\exists u)[\langle x, u\rangle \in W 
                        \mathand D_{u} \subseteq A]\},
\]
where $D_{u}$ is the finite set with canonical index $u$. (In the remainder 
of the paper we will often identify finite sets with their canonical indices, 
so we may write $\langle x, D \rangle \in W$ instead of $\langle x, u \rangle 
\in W$, where $D = D_{u}$.) 

Given $A, B \subseteq \omega$, we say that $A$ is \emph{enumeration 
reducible} (or \emph{$e$-reducible}) \emph{to} $B$ (notation: $A \leq_e B$) 
if there exists an enumeration operator $\Phi$ such that $A = \Phi(B)$. This 
reducibility induces in the usual way a degree structure, namely the 
structure $\mathcal{D}_e^{\mathrm{set}}$ of \emph{$e$-degrees}. One may refer 
to the \emph{standard indexing} $(\Phi_e : e \in \omega)$ of the enumeration 
operators, where $\Phi_e$ is the operator defined by the c.e.\ set $W_e$. 

Let $\mathcal{P}$ denote the set of all partial functions from $\omega$ to 
$\omega$. For $\psi \in \mathcal{P}$, an \emph{enumeration of $\psi$} means 
an enumeration of $\operatorname{graph}(\psi)$. The definition of 
$e$-reducibility transfers in the obvious way from sets to partial functions 
by declaring that for partial functions $\phi$ and $\psi$, we have $\phi 
\leq_e \psi$ if $\operatorname{graph}(\phi) \leq_e 
\operatorname{graph}(\psi)$. The \emph{partial degree} of $\phi$ is the 
equivalence class of $\phi$ under the equivalence relation $\equiv_e$ induced 
by $\leq_e$ on $\mathcal{P}$. The structure of the partial degrees will be 
denoted by $\mathcal{D}_e^{\mathrm{pf}}$. 

If $\phi, \psi \in \mathcal{P}$ and $x$ is a number, then $\phi(x)=\psi(x)$ 
means that either both $\phi(x)$ and $\psi(x)$ are undefined, or they are 
both defined and yield the same value. 

Any enumeration operator $\Phi$ induces a partial mapping $\Phi: \mathcal{P} 
\to \mathcal{P}$ as follows: for $\psi \in \mathcal{P}$, we say that 
$\Phi(\psi)\downarrow$ (or $\psi \in \dom(\Phi)$) if 
$\Phi(\operatorname{graph}(\psi))$ is single-valued, i.e.\ there exists $\chi 
\in \mathcal{P}$ such that $\Phi(\operatorname{graph}(\psi)) = 
\operatorname{graph}(\chi)$. In this case we write $\Phi(\psi)\downarrow = 
\chi$. If $\Phi(\psi)\downarrow$ and $x,y \in \omega$, then 
$\Phi(\psi)(x)\downarrow = y$ means that $\langle x, y \rangle \in 
\Phi(\psi)$, and $\Phi(\psi)(x)\downarrow$ means $(\exists z)\,[\langle x, z 
\rangle \in \Phi(\psi)]$. 

\subsection{Lattices and Brouwer algebras} 

Our main reference for lattice theory is \cite{BalbesDwinger}. A 
\emph{Brouwer algebra} is a lattice $\mathscr{L}=\langle L, \lor, \wedge, 1, 
\leq \rangle$ (here, $\leq$ denotes the partial ordering of the lattice, 
defined by $a\leq b$ iff $a\wedge b=a$ iff $a\lor b=b$; the symbol $\geq$ 
denotes the dual relation of $\leq$) with a greatest element such that for 
every $a,b \in L$, the set $\{c: a \lor c \geq b\}$ has a least element 
(denoted by $a \rightarrow b$). An equivalent formulation is that the type of 
$\mathscr{L}$ can be enriched with a binary operation $a \rightarrow b$, such 
that for all  $a,b,c \in L$, $a \lor c \geq b \Leftrightarrow a\rightarrow b 
\le c$. The Brouwer algebras form an equational class. It is a routine 
exercise to verify that every Brouwer algebra has a least element as well, 
and that every Brouwer algebra is a distributive lattice. Hence we may refer 
to a Brouwer algebra as an algebra $\mathscr{L}=\langle L, \lor, \wedge, 
\rightarrow, 0, 1\rangle$ where $\langle L, \lor, \wedge,  0, 1\rangle$ is a 
bounded distributive lattice, and $a\rightarrow b=\min \{c: a \lor c \geq 
b\}$. We will always assume that whether $\mathscr{L}$ is a Brouwer algebra 
or simply a lattice, it is nontrivial, i.e., $0\ne 1$. Every finite 
distributive lattice is easily seen to be a Brouwer algebra. Finally, if 
$\mathscr{L}=\langle L, \lor, \wedge, \rightarrow, 0, 1\rangle$ is a Brouwer 
algebra and $a\ne 0$ then the interval $[0,a]_{\mathscr{L}}= \langle \{b: b 
\leq a\}, \lor, \wedge, \rightarrow, 0, a\rangle$ is a Brouwer algebra as 
well, with greatest element $a$, and with the operations obtained by 
restricting those of $\mathscr{L}$ to the interval. 
 
\subsection{Propositional logics} 

Brouwer algebras can be used for a semantics of propositional logics. A 
propositional formula $\alpha$ is an \emph{identity} of a Brouwer algebra 
$\mathscr{L}$ if every substitution of elements of $\mathscr{L}$ for the 
propositional atoms of $\alpha$ always yields the least element $0$ of the 
algebra, when the propositional connectives $\lor$, $\wedge$, and 
$\rightarrow$ are interpreted by the algebraic operations $\wedge$, $\lor$, 
and $\rightarrow$, respectively (note that the connective $\lor$ is 
interpreted as the meet operation and $\wedge$ as the join operation). 
Negation $\neg$ is interpreted by the operation $\neg a = a \rightarrow 1$, 
by which we can enrich the type of the Brouwer algebra. The \emph{theory} 
$\Th(\mathscr{L})$ of a Brouwer algebra $\mathscr{L}$ is the set of its 
identities. It is easy to see that $\IPC \subseteq \Th(\mathscr{L}) \subseteq 
\CPC$ for every Brouwer algebra $\MP{L}$, where $\IPC$ denotes the set of 
theorems of the intuitionistic propositional calculus, and $\CPC$ denotes the 
set of classically valid propositional formulas. Straightforward 
verifications show the following. 

\begin{fact}\label{fact:1}
Let $f: \mathscr{L}_1 \longrightarrow \mathscr{L}_2$ be a homomorphism of 
Brouwer algebras. If $f$ is one-one then $\Th{(\mathscr{L}_2)}\subseteq 
\Th{(\mathscr{L}_1 )}$;  if $f$ is onto, then  $\Th{(\mathscr{L}_1)}\subseteq 
\Th{(\mathscr{L}_2)}$. Moreover,  for every Brouwer algebra $\mathscr{L}$ and 
elements $x,y, z \in \mathscr{L}$ such that $x<y$ and $y= z \lor x$, then 
$\Th([0,z]_{\mathscr{L}})\subseteq \Th([x,y]_{\mathscr{L}})$. 
\end{fact}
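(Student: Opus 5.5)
The three claims are handled separately, and in each case identities are transported along the appropriate map by induction on formulas.

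For the first claim, suppose $f$ is one-one and $\alpha \in \Th(\mathscr{L}_2)$. Take any substitution $\sigma$ of elements of $\mathscr{L}_1$ for the atoms of $\alpha$, and write $\alpha^{\mathscr{L}_1}[\sigma]$ for the resulting value. Since $f$ preserves $\lor$, $\wedge$, $\rightarrow$, $0$ and $1$, a routine induction on the construction of $\alpha$ gives
\[
f(\alpha^{\mathscr{L}_1}[\sigma]) = \alpha^{\mathscr{L}_2}[f\circ\sigma].
\]
The right-hand side equals $0$ because $\alpha$ is an identity of $\mathscr{L}_2$. Thus $f(\alpha^{\mathscr{L}_1}[\sigma]) = 0 = f(0)$, and injectivity yields $\alpha^{\mathscr{L}_1}[\sigma]=0$.

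For the second claim, suppose $f$ is onto and $\alpha\in\Th(\mathscr{L}_1)$. Any substitution $\tau$ into $\mathscr{L}_2$ lifts, by surjectivity, to some $\sigma$ with $f\circ\sigma=\tau$. Then
\[
\alpha^{\mathscr{L}_2}[\tau]=f(\alpha^{\mathscr{L}_1}[\sigma])=f(0)=0.
\]

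For the interval statement, I will build a surjective homomorphism $g\colon [0,z]_{\mathscr{L}}\to[x,y]_{\mathscr{L}}$ defined by $g(b)=b\lor x$; the second claim then finishes the proof.
\begin{itemize}
\item \emph{Well defined and onto.} If $b\le z$ then $x\le b\lor x\le z\lor x=y$, so $g(b)\in[x,y]$. If $x\le c\le y$, distributivity gives $c=c\wedge(z\lor x)=(c\wedge z)\lor x=g(c\wedge z)$, with $c\wedge z\le z$.
\item \emph{Constants and lattice operations.} We have $g(0)=x$ and $g(z)=y$. Preservation of $\lor$ is immediate, and preservation of $\wedge$ is distributivity: $(a\wedge b)\lor x=(a\lor x)\wedge(b\lor x)$.
\item \emph{Implication.} This is the main obstacle. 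In $[0,z]$ the implication is the ambient $a\rightarrow b$, which is at most $b\le z$. In $[x,y]$ it is $\min\{c\ge x: a\lor x\lor c\ge b\lor x\}$, and one checks that this equals $((a\lor x)\rightarrow(b\lor x))\lor x$. So I must show
\[
(a\rightarrow b)\lor x=((a\lor x)\rightarrow b)\lor x.
\]
The inequality $\ge$ holds because $a\rightarrow b\ge (a\lor x)\rightarrow b$, as the latter is antitone in its first argument. For $\le$, observe that $a\lor((a\lor x)\rightarrow b)\lor x\ge b$, hence $a\rightarrow b\le((a\lor x)\rightarrow b)\lor x$; joining both sides with $x$ gives the claim. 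The substitution of $b$ for $b\lor x$ in the consequent is harmless because $(a\lor x)\rightarrow(b\lor x)=(a\lor x)\rightarrow b$: the antecedent already contains $x$.
\end{itemize}

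Having checked that $g$ is a surjective homomorphism of Brouwer algebras, the onto case yields $\Th([0,z]_{\mathscr{L}})\subseteq\Th([x,y]_{\mathscr{L}})$.
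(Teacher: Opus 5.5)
Your proof is correct and follows the same route as the paper. The paper treats the homomorphism claims as routine verifications, and for the interval claim it cites Skvortsova's Lemma~4 for the fact that $u\mapsto x\lor u$ is a Brouwer algebra homomorphism from $[0,z]_{\mathscr{L}}$ onto $[x,y]_{\mathscr{L}}$; you use the same map and verify this directly, including the one nontrivial point, that implication in $[x,y]$ is $(a\rightarrow b)\lor x$ and is preserved.
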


\begin{proof}
The statements concerning the homomorphism $f: \mathscr{L}_1 \longrightarrow 
\mathscr{L}_2$ follow from straightforward verifications. For the remaining 
claim, note (see \cite[Lemma~4]{Skvortsova:intuitionism}) that under the 
given assumptions the mapping $u \mapsto x \lor u$ is a Brouwer algebra 
homomorphism from $[0,z]_{\mathscr{L}}$ onto $[x,y]_{\mathscr{L}}$. 
\end{proof}

For further details on the algebraic semantics of propositional logic the 
reader is referred to the classic but still valuable text by Rasiowa and 
Sikorski \cite{Rasiowa-Sikorski:Book}. 

\subsection{Notations}\label{ssct:notations}

The following notation will be used throughout the paper. The symbols 
$\Usl_{\rightarrow}$, $\mathbb{L}_d$, and $\mathbb{B}$ denote, respectively, 
the categories of implicative upper semilattices, distributive lattices, and 
Brouwer algebras: an upper semilattice (with least upper bound operation 
$\lor$, and with partial ordering relation $\leq$) is \emph{implicative} if 
it can be equipped with a binary operation $\rightarrow$ satisfying that $a 
\rightarrow b=\textrm{least }\{c: b \leq a\lor c\}$. The symbol 
$Fr^{L_d}_{\Usl_{\rightarrow}}(\mathscr{U})$ denotes the free distributive 
lattice over the implicative upper semilattice $\mathscr{U}$. Given two 
objects $\mathscr{L}_1$ and $\mathscr{L}_2$ in any of these categories, say 
$\mathbb{C}$, we write $\mathscr{L}_1 \overset{\mathbb{C}}{\simeq} 
\mathscr{L}_2$ (or simply $\mathscr{L}_1 \simeq \mathscr{L}_2$ where $\simeq$ 
denotes order-theoretic isomorphism, since in all of these categories the 
objects are partially ordered structures and the isomorphisms coincide with 
the order-theoretic isomorphisms) to indicate that the two objects are 
isomorphic. We write $f : \mathscr{L}_1 \xhookrightarrow{\mathbb{C}} 
\mathscr{L}_2$ to indicate that $f$ is a monomorphism in $\mathbb{C}$ from 
$\mathscr{L}_1$ to $\mathscr{L}_2$.   

\section{The Dyment lattice and the Dyment-Muchnik lattice} 

A \emph{Dyment mass problem} (or a \emph{$D$-mass problem}) $\MP{A}$ is any 
subset of the collection $\MP{P}$ of all partial functions from $\omega$ to 
$\omega$.  Given $D$-mass problems $\MP{A}, \MP{B}$, we say that $\MP{A}$ is 
\emph{Dyment-reducible} (or, simply, \emph{$D$-reducible}) to  $\MP{B}$ 
(notation $\MP{A} \leq_D \MP{B}$), if there exists an enumeration operator 
$\Phi$ such that $\mathcal{B}\subseteq \dom(\Phi)$, and $\Phi(\psi) \in 
\mathcal{A}$, for every $\psi \in \MP{B}$. The \emph{Dyment degree} (or 
\emph{$D$-degree}) of $\MP{A}$, denoted by $[\MP{A}]_{D}$, is the equivalence 
class of $\MP{A}$ under the equivalence relation $\equiv_{D}$ induced by the 
preordering relation $\le_{D}$. The set of Dyment degrees is denoted by 
$\textbf{M}_{D}$. 

Similar to Muchnik's weakening of Medvedev's reducibility, one can define the 
weaker non uniform version of $D$-reducibility on $D$-mass  problems, for 
which $\MP{A}$ is \emph{Dyment-Muchnik reducible to $\MP{B}$} (notation: 
$\MP{A}\leq_{D,w} \MP{B}$) if $(\forall \psi \in \MP{B})(\exists \phi \in 
\MP{A})[\phi \leq_{e} \psi]$.  The \emph{Dyment-Muchnik degree} of $\MP{A}$, 
denoted by $[\MP{A}]_{D,w}$, is the equivalence class of $\MP{A}$ under the 
equivalence $\equiv_{D,w}$ induced by the preordering relation $\le_{D,w}$. 
The set of Dyment-Muchnik degrees is denoted by $\textbf{M}_{D,w}$. 

Roughly Dyment-Muchnik reducibility is to Dyment reducibility as Muchnik 
reducibility is to Medvedev reducibility. 

We now extend to partial functions and $D$-mass problems the familiar 
operations used by Medvedev on total functions and mass problems of total 
functions. Given partial functions $\phi, \psi \in \mathcal{P}$ define $\phi 
\oplus \psi$ to be the partial function 
\[
\phi \oplus \psi(x)=
\begin{cases}
 \phi(y)     & \text{if $x=2y$}, \\
 \psi(y)   & \text{if $x=2y+1$}.
\end{cases}
\]
If $i\in \omega$ and $\phi\in \mathcal{P}$, define $\cat{i}{\phi}$ to be the
partial function
\[
\cat{i}{\phi}(x)=
\begin{cases}
i  & \text{if $x=0$ }, \\
\phi(x-1)  & \text{if $x>0$},
\end{cases}
\]
and if $\MP{A}$ is a $D$-mass  problem then denote $\cat{i}{\MP{A}}=
\{\cat{i}{\phi}: \phi \in \MP{A}\}$. Finally, let
\begin{align*}
&\MP{A} \oplus  \MP{B}=\{\phi \oplus \psi: \phi\in \MP{A}, \psi \in \MP{B}\},\\
&\MP{A} \wedge \MP{B}=\cat{0}{\MP{A}} \cup \cat{1}{\MP{B}},\\
&\MP{A} \imp \MP{B}=\{\cat{e}{\phi}:
        (\forall \psi\in \MP{A})[\Phi_{e}(\phi\oplus \psi)\downarrow \in
        \MP{B}]\}.
\end{align*}
The Dyment equivalence relation $\equiv_D$ on $D$-mass  problems is a 
congruence with respect to the above introduced operations on $D$-mass  
problems (\cite{Dyment:Someproperties}). Hence, similarly to the case of 
Medvedev reducibility, these operations induce corresponding operations (for 
which we use the same symbols) in the quotient structure. If $\mathbf{0}_{D}$ 
denotes the Dyment degree of the $D$-mass  problem containing all partial 
computable functions, and $\mathbf{1}_{D}$ denotes the Dyment degree of the 
empty $D$-mass  problem, then the following holds. 

\begin{theorem}
The algebra $\mathscr{M}_{D}= \langle \textbf{M}_{D}, \lor, \wedge, 
\rightarrow, \mathbf{0}_{D}, \mathbf{1}_{D}\rangle$ is a Brouwer algebra, 
with least element given by $\mathbf{0}_{D}$, and greatest element given by  
$\mathbf{1}_{D}$, and 
\begin{align*}
&[\MP{A}]_{D} \lor [\MP{B}]_{D}=[\MP{A}\oplus  \MP{B}]_{D},\\
&[\MP{A}]_{D} \wedge [\MP{B}]_{D}=[\MP{A} \wedge \MP{B}]_{D},\\
&[\MP{A}]_{D} \imp [\MP{B}]_{D}=[\MP{A} \imp \MP{B}]_{D}.
\end{align*}
The partial ordering relation $\le_{D}$ on $\mathscr{M}_{D}$ (for which we 
use the same symbol as the preordering relation from which it originates) is 
given by $[\MP{A}]_{D}\le_{D} [\MP{B}]_{D}$ if $\MP{A}\leq_{D} \MP{B}$. 
\end{theorem}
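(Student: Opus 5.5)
The plan is to verify the Brouwer algebra axioms directly on representatives, mirroring Medvedev's classical argument, and then pass to the quotient using the congruence property of $\equiv_D$ cited from \cite{Dyment:Someproperties}. The first step is that $\le_D$ is a preorder. Reflexivity is witnessed by the identity operator. For transitivity, if $\Phi$ witnesses $\MP{A}\le_D\MP{B}$ and $\Psi$ witnesses $\MP{B}\le_D\MP{C}$, then the composition $\Phi\circ\Psi$ is again an enumeration operator. For every $\chi\in\MP{C}$ we have $\Psi(\chi)\downarrow\in\MP{B}$, hence $\Phi(\Psi(\chi))\downarrow\in\MP{A}$, so $\chi\in\dom(\Phi\circ\Psi)$. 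The bounds are checked next. Every $\psi$ computes the nowhere-defined function $\emptyset$, which is partial computable, via the operator with $W=\emptyset$; hence the class of partial computable functions is least. The empty problem is greatest, since $\MP{A}\le_D\emptyset$ holds vacuously.

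For the join, I would show that $\MP{A}\oplus\MP{B}$ is a least upper bound. The operators extracting the even and odd parts show $\MP{A},\MP{B}\le_D\MP{A}\oplus\MP{B}$. Conversely, given witnesses $\Phi$ for $\MP{A}\le_D\MP{C}$ and $\Psi$ for $\MP{B}\le_D\MP{C}$, the operator $\chi\mapsto\Phi(\chi)\oplus\Psi(\chi)$ is an enumeration operator: enumerate $\langle 2x,y\rangle$ when $\langle x,y\rangle\in\Phi(\chi)$, and similarly on the odd side. Its value is single-valued whenever both components are.

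For the meet, $\cat{0}{\MP{A}}\cup\cat{1}{\MP{B}}\le_D\MP{A}$ via $\phi\mapsto\cat{0}{\phi}$, and similarly for $\MP{B}$. For the converse, given $\Phi$ with $\MP{C}\le_D\MP{A}$ and $\Psi$ with $\MP{C}\le_D\MP{B}$, I would define an operator that enumerates $\langle x,y\rangle$ if $\langle 0,0\rangle\in\chi$ and $\langle x,y\rangle\in\Phi(\chi')$, and likewise with $\langle 0,1\rangle$ and $\Psi$. Here $\chi'(n)=\chi(n+1)$ is obtained uniformly. The axioms $\langle x,D\rangle$ of the combined operator simply include the pair $\langle 0,i\rangle$ in the finite set. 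On $\cat{0}{\phi}$ only the first branch fires, since $\langle 0,1\rangle$ is not in the graph, so the output is $\Phi(\phi)\in\MP{C}$. This step is where the partial setting matters. The selector must be read positively from the graph, which works precisely because $\cat{i}{\cdot}$ fixes a defined value at $0$. Distributivity then follows automatically once $\rightarrow$ is shown to exist.

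The main step is the residuation law: $\MP{A}\oplus\MP{C}\ge_D\MP{B}$ iff $\MP{A}\imp\MP{B}\le_D\MP{C}$. For the forward direction, suppose $\Phi_e$ witnesses $\MP{B}\le_D\MP{A}\oplus\MP{C}$. Then the map $\chi\mapsto\cat{e}{\chi}$ sends $\MP{C}$ into $\MP{A}\imp\MP{B}$, since $\Phi_e(\chi\oplus\psi)\downarrow\in\MP{B}$ for all $\psi\in\MP{A}$. Note the order $\chi\oplus\psi$ matches the definition of $\MP{A}\imp\MP{B}$ up to a computable swap, which can be absorbed by re-indexing $e$. For the backward direction, from $\Psi$ with $\Psi(\chi)=\cat{e}{\phi}\in\MP{A}\imp\MP{B}$ I must enumerate $\Phi_e(\phi\oplus\psi)$ from $\psi\oplus\chi$. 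The anticipated obstacle is that $e$ is not known in advance. I would handle this by a single uniform operator that enumerates $\langle x,y\rangle$ whenever, for some $e$, we have $\langle 0,e\rangle\in\Psi(\chi)$ and $\langle x,y\rangle\in\Phi_e(\phi\oplus\psi)$, where $\phi$ is the tail of $\Psi(\chi)$. This is a c.e. union over $e$. Since $\Psi(\chi)$ is single-valued, exactly one $e$ appears, so the output is exactly $\Phi_e(\phi\oplus\psi)\in\MP{B}$, and the operator is defined on all of $\MP{A}\oplus\MP{C}$. Finally, the congruence property makes the operations well defined on degrees, and the order on degrees is the quotient order.
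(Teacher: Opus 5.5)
Your proposal is correct and takes the same route as the paper, which simply defers to Medvedev's proof (see \cite{Medvedev} or \cite[\S~13.7]{Rogers:Book}); you carry that argument out explicitly for partial functions. You correctly handle the two places where partiality matters: the selector in $\cat{0}{\MP{A}}\cup\cat{1}{\MP{B}}$ is read positively from the defined value at $0$, and the index $e$ in $\MP{A}\imp\MP{B}$ is recovered uniformly from the single-valued output of $\Psi$. (Incidentally, the congruence property you cite from \cite{Dyment:Someproperties} is not an extra input, since it follows from the lub, glb and residuation properties you verify on representatives.)
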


\begin{proof}
The proof is essentially the same as the proof that the Medvedev lattice is a 
Brouwer algebra, see \cite{Medvedev} or \cite[\S~13.7]{Rogers:Book}. 
\end{proof}

\begin{definition}
$\mathscr{M}_{D}$ is called the \emph{Dyment lattice}. 
\end{definition}

The Dyment-Muchnik equivalence $\equiv_{D,w}$ on $D$-mass problems is a 
congruence with respect to the above introduced operations on $D$-mass  
problems (\cite{Dyment:Someproperties}), except with $\to$ now interpreted as
\[
\MP{A} \to \MP{B} = 
\{\chi: \QA{\phi \in 
\mathcal{A}}\QE{\psi\in \mathcal{B}}[\psi \leq_e \chi \oplus \phi]\}.
\]
Similarly to the case of Muchnik reducibility, these operations induce 
corresponding operations (for which we use the same symbols) in the quotient 
structure. If $\mathbf{0}_{D,w}$ denotes the Dyment-Muchnik degree of the 
$D$-mass  problem containing all partial computable functions, and 
$\mathbf{1}_{D,w}$ denotes the Dyment-Muchnik degree of the empty 
$D$-problem, then, again, it is straightforward to see that the following 
holds. 

\begin{theorem}
The algebra $\mathscr{M}_{D,w}= \langle \textbf{M}_{D,w}, \lor, \wedge, 
\rightarrow, \mathbf{0}_{D,w}, \mathbf{1}_{D,w}\rangle$ is a Brouwer algebra, 
with least element given by $\mathbf{0}_{D,w}$, and greatest element given by  
$\mathbf{1}_{D,w}$, and 
\begin{align*}
&[\MP{A}]_{D,w} \lor [\MP{B}]_{D,w}=[\MP{A}\oplus  \MP{B}]_{D,w},\\
&[\MP{A}]_{D,w} \wedge [\MP{B}]_{D,w}=[\MP{A} \wedge \MP{B}]_{D,w}=[\MP{A} \cup \MP{B}]_{D,w},\\
&[\MP{A}]_{D,w} \imp [\MP{B}]_{D,w}=[\MP{A} \imp \MP{B}]_{D,w}
\end{align*}
The partial ordering relation $\le_{D,w}$ on $\mathscr{M}_{D,w}$ (for which 
we use the same symbol as the preordering relation from which it originates) 
is given by $[\MP{A}]_{D,w}\le_{D,w} [\MP{B}]_{D,w}$ if $\MP{A}\leq_{D,w} 
\MP{B}$. 
\end{theorem}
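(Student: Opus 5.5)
We verify the Brouwer algebra axioms directly at the level of $D$-mass problems under $\leq_{D,w}$, and then pass to the quotient. First, $\leq_{D,w}$ is a preorder: reflexivity uses $\phi\leq_e\phi$, and transitivity uses transitivity of $\leq_e$ on partial functions, which holds because enumeration operators compose. The empty problem is a greatest element, since the condition $(\forall\psi\in\emptyset)$ holds vacuously. The problem of all partial computable functions is a least element, because a partial computable function has c.e. graph and is therefore $e$-reducible to every $\psi$. Since $0\neq 1$ (for instance $\{\chi\}$ with $\chi$ not partial computable lies strictly between them), the structure is nontrivial.

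Next we check the lattice operations. For the join, $\MP{A},\MP{B}\leq_{D,w}\MP{A}\oplus\MP{B}$ because $\phi\leq_e\phi\oplus\psi$ and $\psi\leq_e\phi\oplus\psi$ via the obvious operators. Conversely, if $\MP{A},\MP{B}\leq_{D,w}\MP{C}$, then each $\chi\in\MP{C}$ bounds some $\phi\in\MP{A}$ and some $\psi\in\MP{B}$, and $\phi\oplus\psi\leq_e\chi$ by combining the two operators. For the meet, $\cat{i}{\phi}\equiv_e\phi$ immediately gives $\MP{A}\wedge\MP{B}\equiv_{D,w}\MP{A}\cup\MP{B}$. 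This set is a lower bound of both $\MP{A}$ and $\MP{B}$ since it contains them. It is also the greatest lower bound: if $\MP{C}\leq_{D,w}\MP{A}$ and $\MP{C}\leq_{D,w}\MP{B}$, then every element of $\MP{A}\cup\MP{B}$ bounds some element of $\MP{C}$. Each operation is monotone in each argument by these same arguments, so $\equiv_{D,w}$ is a congruence for $\oplus$ and $\wedge$, and the operations descend to the quotient.

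The key step is the implication. We must show that for all $\MP{A},\MP{B},\MP{C}$,
\[
\MP{B}\leq_{D,w}\MP{A}\oplus\MP{C}\iff (\MP{A}\to\MP{B})\leq_{D,w}\MP{C}.
\]
For the forward direction, let $\chi\in\MP{C}$. For every $\phi\in\MP{A}$ there is $\psi\in\MP{B}$ with $\psi\leq_e\phi\oplus\chi\equiv_e\chi\oplus\phi$, so $\chi\in\MP{A}\to\MP{B}$ itself, and $\chi\leq_e\chi$. For the backward direction, take $\phi\oplus\chi\in\MP{A}\oplus\MP{C}$. By hypothesis there is $\chi'\in\MP{A}\to\MP{B}$ with $\chi'\leq_e\chi$. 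By the definition of $\MP{A}\to\MP{B}$ there is $\psi\in\MP{B}$ with $\psi\leq_e\chi'\oplus\phi\leq_e\chi\oplus\phi\equiv_e\phi\oplus\chi$. Here we use that $\oplus$ is monotone with respect to $\leq_e$ on partial functions. Hence $\MP{A}\to\MP{B}$ is the least $\MP{C}$ with $\MP{B}\leq_{D,w}\MP{A}\oplus\MP{C}$.

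Because $\to$ is characterized up to $\equiv_{D,w}$ by this universal property, it automatically respects $\equiv_{D,w}$. Thus the quotient is a Brouwer algebra whose operations are as stated, and distributivity follows from the general fact recalled in the background section. The only point that needs care is the non-uniform definition of $\to$. Unlike in the Dyment case, no index is coded, so the argument above is genuinely simpler, and we do not expect any real obstacle beyond checking the $e$-reducibility facts about $\oplus$ for partial functions.
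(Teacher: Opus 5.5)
Your proposal is correct and takes essentially the paper's approach: the paper defers to the standard verification that the Muchnik lattice is a Brouwer algebra, and you carry out that verification directly with $\leq_e$ on partial functions in place of $\leq_T$, including the key observation that each $\chi\in\MP{C}$ already lies in $\MP{A}\to\MP{B}$ whenever $\MP{B}\leq_{D,w}\MP{A}\oplus\MP{C}$. The only facts this transfer needs beyond the Muchnik case are transitivity of $\leq_e$, $\cat{i}{\phi}\equiv_e\phi$, and monotonicity of $\oplus$ under $\leq_e$, and these are exactly the ones you identify and check.
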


\begin{proof}
Again, the proof is essentially the same as the proof that the Muchnik 
lattice is a Brouwer algebra, see \cite{Muchnik:Lattice} or 
\cite{Sorbi:Someremarks}. 
\end{proof}

\begin{definition}
$\mathscr{M}_{D,w}$ is called the \emph{Dyment-Muchnik lattice}.
\end{definition}

We will use boldface capital letters as variables for Dyment degrees (or, 
when clear from context, Dyment–Muchnik degrees). For more information about 
the Dyment lattice and the Dyment-Muchnik lattice  see 
\cite{Dyment:Someproperties}, \cite{Sorbi:Someremarks}, \cite{Sorbi-Survey}. 

\section{The Dyment lattice and intermediate logics: 
An initial segment of the Dyment lattice modeling exactly 
$\IPC$}\label{sec:Dyment} 

We split this section into three subsections. The first subsection reviews 
some known algebraic facts regarding Brouwer algebras, and known logical 
ingredients relating Brouwer algebras to the intuitionistic propositional 
logic. The second subsection shows some useful computability-theoretic facts 
relative to the Dyment lattice. The third subsection contains the main 
theorem stating that there is an initial segment of the Dyment lattice 
modeling $\IPC$. 

\subsection{The algebraic ingredients}\label{sct:alg-ingred}
If $A$ is a subset of a preordered set $\mathscr{P}=\langle P, \leq\rangle$ 
then let 
\begin{align*}
A_{u\leq}&= \{y\in P: (\exists x\in A)[y \ge x]\},\\
A_{d\leq}&= \{y\in P: (\exists x \in A)[y \le x]\}.
\end{align*}
We call $A_{u\leq}$ the \emph{upwards $\leq$-closure of $A$ in 
$\mathscr{P}$}, and we call $A_{d\leq}$  the \emph{downwards~$\leq$-closure 
of $A$ in $\mathscr{P}$}. If $A=A_{u\leq}$ then we say that $A$ is 
\emph{upwards~$\le$-closed in $\mathscr{P}$}, and if $A=A_{d\leq}$ then we 
say that $A$ is \emph{downwards~$\le$-closed in $\mathscr{P}$}.  

If $\mathscr{P}$ is a preorder as above, then let $\up(\mathscr{P})=\langle 
\up(P), \supseteq \rangle $ be the poset with universe 
\[
\up(P)=\{X \subseteq P: \textrm{ $X$ is upwards~$\leq$-closed in 
$\mathscr{P}$}\},
\]
and partially ordered by reverse inclusion. Simple calculations show that 
$\up(\mathscr{P})$ is a Brouwer algebra, with $\cup$ giving the operation 
$\wedge$ of greatest lower bound, $\cap$ giving the operation $\lor$ of least 
upper bound, $0=U$, $1= \emptyset$, and finally, 
\begin{equation*}\label{eqn:compute-arrow}
X\rightarrow Y=\bigcup \{Z \in \up(P) : Y \supseteq Z \cap X\}.
\end{equation*} 

Starting from a pre-upper semilattice $\mathscr{U} = \langle U,+,\le \rangle$ 
(see next Remark) one can carry out a similar construction, and get a Brouwer 
algebra $\up(\mathscr{U})$. In this case we can also describe $\lor$ as 
$X\lor Y=\{x+y: x \in X,\, y \in Y\}$, and $\rightarrow$ as $X\rightarrow 
Y=\{z: \QA{x \in X}[z+x\in Y]\}$.  

\begin{remark}\label{rem:iso-upsets}
By a \emph{pre-upper semilattice} $\mathscr{U} = \langle U,+,\le \rangle$ we 
mean that $\leq$ is a preordering relation and the equivalence relation 
originated by $\leq$ is a congruence with respect to $+$, so that the 
quotient structure $\mathscr{U}_{/\equiv} =\langle U_{/\equiv}, +_{/\equiv}, 
\leq_{/\equiv} \rangle$ is a an upper semilattice. A \emph{pre-upper 
semilattice with a least element} is defined similarly. It is easy to see 
that if $\mathscr{P}$ is a preorder then $\up(\mathscr{P}) \simeq 
\up(\mathscr{P}_{/\equiv})$ via the isomorphism $\mathcal{A} \mapsto 
\{a_{/\equiv}: a \in \mathcal{A}\}$, and consequently if $\mathscr{U}$ is a 
pre-upper semilattice, then $\up(\mathscr{U}) \simeq 
\up(\mathscr{U}_{/\equiv})$. 
\end{remark} 

Recall from Subsection~\ref{ssct:comp} that $\mathscr{D}_e^{pf}$ is the upper 
semilattice of the partial degrees. Let $\mathscr{P}_e= \langle 
\mathcal{P},\oplus, \leq_e \rangle$ be the pre-upper semilattice of the 
partial functions under enumeration reducibility $\leq_e$. Then

\begin{theorem}\label{thm:iso-usl-muchnik}
We have
\[
\mathscr{M}_{D,w} \simeq \up(\mathscr{D}_e^{pf}) 
\simeq \up(\mathscr{P}_e).
\]
\end{theorem}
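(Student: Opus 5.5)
The second isomorphism $\up(\mathscr{D}_e^{pf}) \simeq \up(\mathscr{P}_e)$ is an instance of Remark~\ref{rem:iso-upsets}, since $\mathscr{D}_e^{pf}$ is the quotient $(\mathscr{P}_e)_{/\equiv_e}$. So I only need to show $\mathscr{M}_{D,w} \simeq \up(\mathscr{P}_e)$. The idea is to send a $D$-mass problem to the set of partial functions that can solve it.

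For a $D$-mass problem $\MP{A}$, define
\[
F(\MP{A}) = \{\psi \in \mathcal{P} : (\exists \phi \in \MP{A})[\phi \leq_e \psi]\},
\]
which is the upward $\leq_e$-closure of $\MP{A}$ in $\mathscr{P}_e$.

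First I check that $F$ induces an order isomorphism. By the definition of $\leq_{D,w}$, we have $\MP{A} \leq_{D,w} \MP{B}$ iff every $\psi \in \MP{B}$ lies in $F(\MP{A})$. Since $F(\MP{A})$ is upward closed, this is equivalent to $F(\MP{B}) \subseteq F(\MP{A})$, i.e.\ $F(\MP{A}) \leq F(\MP{B})$ in $\up(\mathscr{P}_e)$, which is ordered by reverse inclusion.

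This equivalence gives three things at once:
\begin{itemize}
\item $F$ is well defined on Dyment–Muchnik degrees;
\item $F$ is injective on degrees;
\item $F$ is order preserving and reflecting.
\end{itemize}
Surjectivity is immediate: any upward-closed $X \subseteq \mathcal{P}$ satisfies $X = F(X)$, viewing $X$ itself as a $D$-mass problem. Hence $F$ is an order isomorphism between the posets $\langle \textbf{M}_{D,w}, \leq_{D,w}\rangle$ and $\up(\mathscr{P}_e)$.

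Both structures are Brouwer algebras, and each of $\lor$, $\wedge$, $\rightarrow$, $0$, $1$ is determined by the order. An order isomorphism between lattices therefore automatically preserves all the Brouwer operations. This gives isomorphism in the category $\mathbb{B}$, in line with the convention in Subsection~\ref{ssct:notations}.

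As a sanity check, I can verify the operations directly, which also confirms the formulas in the theorem describing $\mathscr{M}_{D,w}$:
\begin{itemize}
\item $F(\MP{A}\oplus\MP{B}) = F(\MP{A}) \cap F(\MP{B})$, since $\phi\oplus\phi' \leq_e \psi$ iff both $\phi \leq_e \psi$ and $\phi' \leq_e \psi$.
\item $F(\MP{A} \cup \MP{B}) = F(\MP{A}) \cup F(\MP{B})$.
\item For implication, $F(\MP{A}\to\MP{B}) = \{\chi : (\forall \phi\in F(\MP{A}))[\chi\oplus\phi \in F(\MP{B})]\}$. This matches the description $X \rightarrow Y = \{z : \QA{x\in X}[z+x\in Y]\}$ given for $\up$ of a pre-upper semilattice. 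Here one uses that replacing $\phi \in \MP{A}$ by anything above it only enlarges $\chi\oplus\phi$.
\end{itemize}

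The only point needing a little care is this implication step, in particular the fact that $\MP{A}\to\MP{B}$ is already upward closed. Even so, the order-isomorphism argument makes checking $\rightarrow$ by hand unnecessary.
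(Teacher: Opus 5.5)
Your proof is correct and takes the same route as the paper. The paper also uses the map $[\MP{A}]_{D,w} \mapsto \MP{A}_{u\leq_e}$ and the chain $\MP{A} \leq_{D,w} \MP{B} \Leftrightarrow \MP{B} \subseteq \MP{A}_{u\leq_e} \Leftrightarrow \MP{B}_{u\leq_e} \subseteq \MP{A}_{u\leq_e}$. You additionally make explicit what the paper leaves implicit: surjectivity, the appeal to Remark~\ref{rem:iso-upsets} for $\up(\mathscr{D}_e^{pf}) \simeq \up(\mathscr{P}_e)$, and why an order isomorphism preserves the Brouwer operations.
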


\begin{proof}
For every pair of $D$-mass problems $\MP{A}$ and $\MP{B}$, we have 
\[
\MP{A} \leq_{D,w} \MP{B} \Leftrightarrow \MP{B} \subseteq 
\MP{A}_{u\leq_e}\Leftrightarrow 
\MP{B}_{u\leq_e} \subseteq \MP{A}_{u\leq_e}. 
\]
Hence the mapping $[\mathcal{A}]_{D,w} \mapsto \MP{A}_{u\leq_e}$ provides an 
isomorphism between $\mathscr{M}_{D,w}$ and $\up(\mathscr{P}_e)$. 
\end{proof}

\begin{definition}\cite[Definition~3.1]{Kuyper-Medvedev}\label{def:alpha0}
Given  a downwards~$\leq$-closed set $A$ in a pre-upper semilattice 
$\mathscr{U}=\langle U, +, \leq \rangle$, by a \emph{strong upwards antichain 
in $A$} (also, abbreviated as \emph{strong $u$--antichain in $A$}) we mean 
any collection of elements $\{x_i: i \in I\} \subseteq A$ such that 
\[
(\forall i,j \in I)[i \ne j \Rightarrow x_i + x_j \notin A].
\]
\end{definition}

An obvious instance of this definition is when $\{x_i: i \in I\}$ is an 
antichain, and $A = \{x_i: i \in I\}_{d\leq}$.  

\begin{definition}\label{def:alpha}
If $\mathscr{U}$ and $A$ are as above, and $\{x_i: i \in I\}$ is a strong 
$u$--antichain  in $A$,  then for every $X \subseteq I$ define 
\[
\alpha(X)=A^c \cup \{x_i: i \in X\}_{u\leq},
\]
where $A^c=U \smallsetminus A$. 
\end{definition}

\begin{lemma}\label{lem:alphab}
Notice that $\alpha(X)$ is upwards $\leq$-closed. Moreover, $\alpha(I)=A^c 
\cup \{x_i: i \in I\}_{u\leq}$ and $\alpha(\emptyset)=A^c$. 
\end{lemma}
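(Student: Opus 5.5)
The plan is a direct verification from Definition~\ref{def:alpha}, using only two facts: upward closures are upward closed, and $A$ is downwards $\leq$-closed in $\mathscr{U}$. No earlier results are needed beyond these definitions.

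\emph{Upward closure of $\alpha(X)$.} Since a union of upwards $\leq$-closed sets is upwards $\leq$-closed, it suffices to check the two pieces separately.
\begin{itemize}
\item[(a)] The set $\{x_i : i\in X\}_{u\leq}$ is upwards closed by construction. Indeed, if $y\geq z\geq x_i$, then $y\geq x_i$ by transitivity of the preorder $\leq$.
\item[(b)] The complement $A^c$ is upwards closed because $A$ is downwards closed. Suppose $y\in A^c$ and $z\geq y$. If we had $z\in A$, then downward closure of $A$ would force $y\in A$, which is a contradiction. Hence $z\in A^c$.
\end{itemize}
Therefore $\alpha(X)=A^c\cup\{x_i : i\in X\}_{u\leq}$ is upwards $\leq$-closed.

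\emph{The two special values.}
\begin{itemize}
\item The identity $\alpha(I)=A^c\cup\{x_i : i\in I\}_{u\leq}$ is literally Definition~\ref{def:alpha} with $X=I$.
\item For $X=\emptyset$, the upward closure of the empty set is empty, because $A_{u\leq}$ requires a witness $x\in A$. Hence $\alpha(\emptyset)=A^c$.
\end{itemize}

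No step is a real obstacle. The only point needing care is item (b), since it is the one place where the hypothesis that $A$ is downwards closed is used.
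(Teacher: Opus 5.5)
Your proof is correct and matches the paper, whose proof is simply ``Immediate''; you have just written out the routine checks (transitivity for $\{x_i: i\in X\}_{u\leq}$, downward closure of $A$ for $A^c$, and $\emptyset_{u\leq}=\emptyset$). One cosmetic point: in your last bullet you reuse the letter $A$ for the generic set in the definition of $A_{u\leq}$, which clashes with the downwards-closed set $A$ of Definition~\ref{def:alpha}.
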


\begin{proof} 
Immediate.
\end{proof}

\begin{lemma}\cite[Theorem~3.3]{Kuyper-Medvedev}\label{lem:algebraic}
If $\mathscr{U}$, $A$, $\{x_i: i \in I\}$, $\alpha$ are as in 
Definitions~\ref{def:alpha0} and \ref{def:alpha}, and $\mathscr{P}(I)=\langle 
\mathcal{P}(I), \supseteq \rangle$ is the Boolean algebra of the power set of 
$I$ ordered by reverse inclusion, then $\alpha: \mathscr{P}(I) 
\xhookrightarrow{\Usl_{ \rightarrow}} [\alpha(I), 
\alpha(\emptyset)]_{\up(\mathscr{U})}$ is an embedding which preserves both the least and the greatest elements. 
\end{lemma}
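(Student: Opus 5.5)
We need to show: $\alpha$ maps $\mathcal{P}(I)$ into the interval $[\alpha(I),\alpha(\emptyset)]$, is injective and order-preserving in both directions, preserves joins ($\lor$ = intersection in $\up$, union in the reverse-inclusion power set) and implication, and sends least and greatest elements to least and greatest elements.

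First, by Lemma~\ref{lem:alphab} each $\alpha(X)$ is upward closed. In $\up(\mathscr{U})$ the order is $\supseteq$, and $A^c\subseteq\alpha(X)\subseteq\alpha(I)$, so $\alpha(I)\le\alpha(X)\le\alpha(\emptyset)$ and $\alpha$ lands in the interval. The least element of $\mathscr{P}(I)$ (under $\supseteq$) is $I$ and the greatest is $\emptyset$, so the endpoints are preserved by definition.

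For order embedding: if $X\subseteq Y$, then clearly $\alpha(X)\subseteq\alpha(Y)$. Conversely, suppose $j\in Y\setminus X$. We claim $x_j\notin\alpha(X)$. Indeed $x_j\in A$, so $x_j\notin A^c$. If $x_j\ge x_i$ for some $i\in X$, then $i\neq j$ and $x_i+x_j\equiv x_j\in A$, contradicting strong $u$-antichain. (Here I use that $A$ is downward closed and $+$ respects $\equiv$, so $x_i+x_j\le x_j$ puts it in $A$.) Hence $\alpha(X)\subseteq\alpha(Y)$ iff $X\subseteq Y$, which gives injectivity and reflection of order.

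Join: in $\mathscr{P}(I)$ the join of $X,Y$ is $X\cap Y$, and in $\up$ it is $\alpha(X)\cap\alpha(Y)$. The inclusion $\supseteq$ is clear. For $\subseteq$, take $z\in\alpha(X)\cap\alpha(Y)$ with $z\in A$. Then $z\ge x_i$ with $i\in X$ and $z\ge x_j$ with $j\in Y$. If $i\ne j$, then $x_i+x_j\le z\in A$, so $x_i+x_j\in A$, contradicting the strong $u$-antichain property. So $i=j\in X\cap Y$, and $z\in\alpha(X\cap Y)$.

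Implication: this is the main obstacle, since $\to$ in the interval is the restriction of $\to$ in $\up(\mathscr{U})$. In $\mathscr{P}(I)$ with $\supseteq$, we have $X\to Y=Y\setminus X$, the largest set $Z$ with $Z\cup X\supseteq Y$ reversed, i.e.\ the least $Z$ in that order. We must show
\[\alpha(X)\to\alpha(Y)=\{z:\forall w\in\alpha(X)\,[z+w\in\alpha(Y)]\}\]
equals $\alpha(Y\setminus X)$.

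For $\supseteq$: let $z\in A^c$ or $z\ge x_k$ with $k\in Y$. Then $z+w\ge z$ lies in $A^c\cup\{x_k\}_{u\le}\subseteq\alpha(Y)$, using that $A^c$ is upward closed because $A$ is downward closed.

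For $\subseteq$: let $z$ be in the implication and suppose $z\in A$. Since $A^c\subseteq\alpha(X)$ we get nothing from that side, so test with the elements $x_i$, $i\in X$. If $X\neq\emptyset$, pick $i\in X$. Then $z+x_i\in\alpha(Y)$. Either $z+x_i\in A^c$, or $z+x_i\ge x_k$ with $k\in Y$. The delicate point is extracting some $k\in Y\setminus X$ with $z\ge x_k$. Testing with $w=z$ itself, if $z\in\alpha(X)$, gives $z+z\equiv z\in\alpha(Y)$, so $z\ge x_k$ for some $k\in Y$. I expect the case analysis to work out by using $w=x_i$ for suitable $i$ together with the antichain property.

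However, $z\in A$ need not lie above any $x_i$; for example $z=0$ with $X\ne I$ would make $z\notin\alpha(Y\setminus X)$. So equality can genuinely fail, and the plan has to account for that. The fix I would pursue is to relativize: the interval's implication is $(\alpha(X)\to\alpha(Y))\cap\alpha(I)$, since within an interval $[a,b]$ implication is $a\lor(\cdot)$, which is intersection with $\alpha(I)$. After intersecting, $z$ lies above some $x_k$. If $k\notin X$, then $k\in Y$: take $i\in X$ (the case $X=\emptyset$ is trivial), $i\neq k$, so $z+x_i\ge x_k+x_i\notin A$ forces $z+x_i\in A^c$, which does not help directly. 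Instead test with $w=x_k$ when $k\in X$: then $z+x_k\equiv z\in\alpha(Y)$ gives $z\ge x_m$, $m\in Y$, and the antichain property forces $m=k$. I would carefully close this case analysis, and I expect it to be the main computation.
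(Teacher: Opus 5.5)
Your handling of the range, the endpoints, order reflection and joins is correct. You are also right that the implication to be matched is the interval one, $\alpha(I)\cap(\alpha(X)\to\alpha(Y))$.

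The gap is in the implication step, and it comes from using the wrong implication in $\mathscr{P}(I)$. Under $\supseteq$ the join is $\cap$, as you yourself use for joins. So $X\to Y$ is the $\subseteq$-largest $Z$ with $X\cap Z\subseteq Y$, namely $(I\smallsetminus X)\cup Y$. Your $Y\smallsetminus X$ is the implication for the order $\subseteq$, where the join is $\cup$. A quick check: $X\to X$ must be the least element $I$, whereas $X\smallsetminus X=\emptyset$ is the greatest.

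Consequently the case analysis you leave open cannot be closed:
\begin{enumerate}
\item The step ``if $k\notin X$ then $k\in Y$'' is false. For $X=Y=\emptyset$ the interval implication is all of $\alpha(I)$, which contains every $x_k$, while $\alpha(Y\smallsetminus X)=A^c$ contains none.
\item The case $X=\emptyset$ is not trivial for your target. There the interval implication is $\alpha(I)$, not $\alpha(Y)$.
\item In the case $k\in X$, your own argument yields $k\in X\cap Y$. The antichain property then rules out $z\in\alpha(Y\smallsetminus X)$.
\end{enumerate}

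With the correct target the verification is short.

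For $\subseteq$, elements of $A^c$ are trivial. If $z\in A$ lies in the interval implication, then $z\ge x_k$ for some $k\in I$. If $k\notin X$, then $z\in\alpha(I\smallsetminus X)$. If $k\in X$, then $z\in\alpha(X)$, so testing with $w=z$ gives $z\equiv z+z\in\alpha(Y)$.

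For $\supseteq$, note that $\alpha((I\smallsetminus X)\cup Y)\subseteq\alpha(I)$ trivially. Elements of $A^c$ and elements above some $x_k$ with $k\in Y$ are handled as you did. Now let $z\ge x_k$ with $k\notin X$, and let $w\in\alpha(X)$. Either $w\in A^c$, or $w\ge x_i$ with $i\in X$. In the second case $i\ne k$, so $z+w\ge x_i+x_k\notin A$. Either way $z+w\in A^c\subseteq\alpha(Y)$, because $A$ is downwards closed.

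This is exactly the observation you set aside as ``not helping directly''. It is where the strong $u$-antichain property enters the implication step, and it is why $I\smallsetminus X$ must occur in $X\to Y$.
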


\begin{proof}
The proof follows from easy calculations: the lazy reader can look at 
\cite[Theorem~3.3]{Kuyper-Medvedev}, where these calculations  refer  to the 
case when $\mathscr{U}$ is the pre-upper semilattice $\langle \omega^\omega, 
\oplus,  \leq_T \rangle$ of total functions under Turing reducibility. 
\end{proof}

Recall that an element $x$ of a lattice is \emph{meet-reducible} if there are $y, z > x$
such that $x = y \wedge z$.  Dually, $x$ is \emph{join-reducible}  if there are $y, z < x$
such that $x = y \vee z$.  Elements that are not meet-reducible are called \emph{meet-irreducible},
and elements that are not join-reducible are called \emph{join-irreducible}.

\begin{definition}\cite{Skvortsova:intuitionism}\label{def:canonical}
A subset $\mathscr{C}$ of a Brouwer algebra $\mathscr{B}$ is called 
\emph{canonical in $\mathscr{B}$} if 
\begin{enumerate}
\item if $a \in \mathscr{C}$ and $a,b \in \mathscr{B}$ then $a \rightarrow 
    b \wedge c= (a\rightarrow b) \wedge (a\rightarrow c)$; 
  \item the elements of $\mathscr{C}$ are meet-irreducible; 
  \item $\mathscr{C}$ is a sub-implicative upper semilattice of 
      $\mathscr{B}$. 
\end{enumerate}
\end{definition}

\begin{lemma}\label{lem:canonical-new}
Suppose that $\mathscr{C}$ is a canonical set in a Brouwer algebra 
$\mathscr{B}$. If $\{a_i: i\in I\}$ and $\{b_j: j\in J\}$ are elements of 
$\mathscr{C}$ then we have: 
\begin{enumerate}
  \item $\bigwedge_{i \in I} a_i \leq \bigwedge_{j \in J} b_j 
      \Leftrightarrow \QA{j\in J}\QE{i \in I}[a_i \leq b_j]$; 
  \item $\bigwedge_{i\in I} a_i \rightarrow \bigwedge_{j \in J} b_j = 
      \bigvee_{i\in I}\left( \bigwedge_{j \in J} (a_i \rightarrow 
      b_j)\right)$. 
\end{enumerate}
\end{lemma}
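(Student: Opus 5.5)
Both parts reduce to two facts about a single element $a \in \mathscr{C}$ and a finite meet $\bigwedge_{j\in J} b_j$. Here $I$ and $J$ are finite, since only finite meets make sense in $\mathscr{B}$; for $J=\emptyset$ the empty meet is $1$, and the statement is read accordingly. By induction on $|J|$, clause (1) of Definition~\ref{def:canonical} gives
\[
a \rightarrow \bigwedge_{j\in J} b_j = \bigwedge_{j\in J}(a\rightarrow b_j).
\]
The second fact is
\[
a \leq \bigwedge_{j \in J} b_j \Leftrightarrow \QE{j\in J}[a\leq b_j].
\]
To prove it, use the Brouwer algebra fact $x\leq y \Leftrightarrow x\rightarrow y = 0$ together with $a\leq b_j \Leftrightarrow a\rightarrow b_j=0$. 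Then $a\leq\bigwedge_j b_j$ holds iff $\bigwedge_j (a\rightarrow b_j)=0$. By clause (3), each $a\rightarrow b_j$ lies in $\mathscr{C}$ (here $b_j\in\mathscr{C}$), so each is meet-irreducible by clause (2).

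The remaining step, and the main obstacle, is that a finite meet of meet-irreducible elements equal to $0$ forces one factor to be $0$. This needs a reading of ``meet-irreducible'' under which $0$ counts as irreducible, i.e.\ $x=y\wedge z \Rightarrow x=y$ or $x=z$. With that reading, $0=y\wedge z$ with $y,z\in\mathscr{C}$ gives $y=0$ or $z=0$ directly. If instead the definition only requires that an element is not a meet of two strictly larger elements, the direct argument is: suppose $0=y\wedge z$ with $y,z>0$. Since $0\le y\wedge z$ trivially, we need $0$ in $\mathscr{C}$, and $0 = a\rightarrow a\in\mathscr{C}$ by clause (3). So $0$ is meet-irreducible, which contradicts $0=y\wedge z$ with $y,z>0$. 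Induction on $|J|$ then finishes, using that $\bigwedge_{j\neq j_0}(a\rightarrow b_j)$ need not lie in $\mathscr{C}$. To handle this, rewrite it as $a\rightarrow\bigwedge_{j\ne j_0} b_j$ and use the converse direction inductively, or more simply induct on the general claim that for $c_1,\dots,c_n\in\mathscr{C}$ one has $\bigwedge c_k = 0 \Rightarrow$ some $c_k=0$. The latter follows from the general lattice fact that a meet-irreducible (in the strong sense) element $p$ satisfies $p\geq \bigwedge c_k \Rightarrow p \geq$ some $c_k$ in a distributive lattice: write $p=p\vee\bigwedge c_k=\bigwedge (p\vee c_k)$ and apply irreducibility. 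Applied to $p=0$, this gives some $c_k=0$.

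For (1), the direction ``$\Leftarrow$'' is trivial. For ``$\Rightarrow$'', fix $j$; then $\bigwedge_i a_i\le b_j$. Applying $\rightarrow b_j$, the Brouwer identity $(x\wedge y)\rightarrow z=(x\rightarrow z)\vee(y\rightarrow z)$ gives $0=\bigvee_i (a_i\rightarrow b_j)$, hence every $a_i\rightarrow b_j=0$. That is the wrong direction, so I would instead argue via irreducibility directly. Since $b_j\in\mathscr{C}$ is meet-irreducible and $b_j\ge\bigwedge_i a_i$, distributivity gives $b_j=\bigwedge_i(b_j\vee a_i)$, so $b_j=b_j\vee a_i$ for some $i$, i.e.\ $a_i\le b_j$.

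For (2), the Brouwer identity above gives
\[
\bigwedge_i a_i\rightarrow B=\bigvee_i(a_i\rightarrow B), \qquad B=\bigwedge_j b_j,
\]
which holds in any Brouwer algebra: $x\wedge y\rightarrow z$ is the least $c$ with $z\le (x\wedge y)\vee c=(x\vee c)\wedge(y\vee c)$. The first fact then gives $a_i\rightarrow B=\bigwedge_j(a_i\rightarrow b_j)$.
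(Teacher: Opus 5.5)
Your final arguments for (1) and (2) are correct and are essentially the paper's. For (1), meet-irreducibility of $b_j$ plus distributivity ($b_j=\bigwedge_i(b_j\vee a_i)$) gives some $a_i\le b_j$, which is what the paper means by ``obvious by meet-irreducibility.'' For (2), you use $(x\wedge y)\rightarrow z=(x\rightarrow z)\vee(y\rightarrow z)$ and then clause (1) of Definition~\ref{def:canonical}, iterated over $J$. Your one-line justification of that identity is valid once you add that $z\le(x\vee c)\wedge(y\vee c)$ is equivalent to $(x\rightarrow z)\vee(y\rightarrow z)\le c$; it is slightly slicker than the paper's two-inequality argument.

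However, your ``second fact'' is false and should be deleted. As stated, $a\le\bigwedge_j b_j$ iff $a\le b_j$ for \emph{some} $j$ fails; the correct right-hand side is ``for all $j$,'' which is trivial. The argument you offer for it also uses the wrong orientation of the arrow. In a Brouwer algebra $x\rightarrow y$ is the least $c$ with $x\vee c\ge y$, so $x\rightarrow y=0$ iff $y\le x$, not iff $x\le y$. The same reversal is why your first attempt at (1) produced the false conclusion that every $a_i\rightarrow b_j=0$.

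With the orientation fixed, your computation gives $\bigwedge_j b_j\le a$ iff $b_j\le a$ for some $j$, i.e.\ meet-primeness of $a\in\mathscr{C}$. That is exactly what your direct distributivity argument in (1) proves. Routing it through the meet-irreducibility of $0=a\rightarrow a$ uses the very same lattice fact, applied to $0$ instead of $a$, and gains nothing. Likewise, the two readings of ``meet-irreducible'' you distinguish coincide, since $x=y\wedge z$ already forces $y,z\ge x$. None of this material is used in your proofs of (1) and (2), so removing it leaves a complete proof.
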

\begin{proof} 
Item~(1) is obvious by meet-irreducibility of the elements of $\mathscr{C}$. 
As to item~(2), it is known (see for instance \cite[IX~\S2.~Theorem~3(ix)]{BalbesDwinger} in the dual context of Heyting 
algebras) that in any Brouwer algebra we have that $a\wedge b \rightarrow c= 
(a \rightarrow c) \lor (b \rightarrow c)$. Indeed, by taking infima and 
distributivity, from $a \lor (a\rightarrow c)\lor (b \rightarrow c)\ge c$ and 
$b \lor (a\rightarrow c)\lor (b \rightarrow c)\ge c$ one gets $(a\wedge 
b)\lor ((a\rightarrow c) \lor (b \rightarrow c))\ge c$, thus $a \wedge b 
\rightarrow c \leq (a\rightarrow c)\lor (b \rightarrow c)$; on the other 
hand, $a \lor (a\wedge b \rightarrow c) \ge (a \wedge b)\lor (a\wedge b 
\rightarrow c) \ge c$, from which $a \rightarrow c \leq a\wedge b \rightarrow 
c$, and similarly $b \rightarrow c \leq a\wedge b \rightarrow c$, which gives 
$(a \rightarrow c) \lor (b \rightarrow c) \leq a\wedge b \rightarrow c$. 

Putting things together, and using that $a \rightarrow b \wedge c= (a 
\rightarrow b) \wedge (a \rightarrow c)$ if $a\in \mathscr{C}$, we conclude 
\[
\bigwedge_{i\in I} a_i \rightarrow \bigwedge_{j \in J} b_j = 
      \bigvee_{i\in I} \left(a_i \rightarrow \bigwedge_{j \in J} b_i\right)= 
      \bigvee_{i\in I} \bigwedge_{j \in J} (a_i \rightarrow b_j). 
\] 
This shows item (2).
\end{proof}

\begin{lemma}\label{lem:free-can}
Let $\mathscr{U}$ be a finite implicative upper semilattice. Then the  
elements of $\mathscr{U}$ that freely generate 
$Fr^{L_d}_{\Usl_{\rightarrow}}(\mathscr{U})$ correspond exactly to the 
meet-irreducible elements of $Fr^{L_d}_{\Usl_{\rightarrow}}(\mathscr{U})$, 
and they form a canonical set in $Fr^{L_d}_{\Usl_{\rightarrow}}(\mathscr{U})$. 
\end{lemma}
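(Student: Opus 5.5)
I would work with an explicit model of the free distributive lattice over a finite implicative upper semilattice, rather than with the abstract universal property. Let $\mathscr{U}=\langle U,\lor,\rightarrow\rangle$ be finite, and let $\mathscr{U}^{op}$ be $U$ with reversed order. The plan is to show that $Fr^{L_d}_{\Usl_{\rightarrow}}(\mathscr{U})$ is isomorphic to the lattice of nonempty upward-closed subsets of $\mathscr{U}$, ordered by reverse inclusion. This is essentially $\up(\mathscr{U})$ with the top $\emptyset$ removed, or with it kept and then adjusted; I would check which convention for ``free'' the paper needs.

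Concretely, I would show that $\up(\mathscr{U})$ is the free distributive lattice generated by the upper semilattice $\mathscr{U}$ via $\iota(u)=\{u\}_{u\leq}$. This amounts to the following steps.

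First, $\iota$ is a $\Usl_\rightarrow$-embedding. Indeed, $\{u\}_{u\le}\cap\{v\}_{u\le}=\{u\lor v\}_{u\le}$. Moreover, the formula $X\rightarrow Y=\{z:\QA{x\in X}[z+x\in Y]\}$ gives $\iota(u)\rightarrow\iota(v)=\{z: z\lor u\ge v\}=\{u\rightarrow v\}_{u\le}$.

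Second, every element is a finite meet (that is, a union) of principal upsets, since $\mathscr{U}$ is finite. So $\iota(U)$ generates the lattice.

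Third, for the universal property, take any distributive lattice $L$ and any join-preserving map $f:\mathscr{U}\to L$. Define $\bar f(X)=\bigwedge_{u\in \min X} f(u)$. Well-definedness and the preservation of $\wedge$ are clear. Preservation of $\lor$ comes from distributivity:
\[
\Bigl(\bigwedge_i f(u_i)\Bigr)\lor\Bigl(\bigwedge_j f(v_j)\Bigr)=\bigwedge_{i,j}f(u_i\lor v_j).
\]

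Once the model is fixed, I would identify the meet-irreducibles. A union of principal upsets equals one of its members only if that member contains all the others. Hence an upset $X$ is meet-irreducible iff it is principal, i.e.\ $X=\iota(u)$. Conversely, a principal upset $\{u\}_{u\le}=Y\cup Z$ forces $u\in Y$ or $u\in Z$, so it cannot be a proper meet of two strictly larger elements. Thus the meet-irreducibles are exactly the free generators $\iota(U)$.

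For canonicity, conditions (2) and (3) of Definition~\ref{def:canonical} follow from what was just shown. For condition (1), take $a=\iota(u)$ and arbitrary upsets $Y,Z$. Then
\[
\iota(u)\rightarrow(Y\cup Z)=\{z: z\lor u\in Y\cup Z\}=\{z:z\lor u\in Y\}\cup\{z:z\lor u\in Z\},
\]
which is exactly $(a\rightarrow Y)\wedge(a\rightarrow Z)$. The key point is that $\iota(u)$ is principal, so ``$\forall x\in\iota(u)$'' reduces to the single element $u$ by upward closure.

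The main obstacle is the universal-property step, in particular matching the paper's notion of ``free distributive lattice over an implicative upper semilattice.'' Two things need checking: whether bounds are required (a top element given by $\emptyset$), and that the freeness is only with respect to $\lor$ (and $\rightarrow$ where it exists), not $\wedge$.
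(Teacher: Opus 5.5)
Your proof is correct, and its skeleton is the paper's. The paper's formal meets $\bigwedge_{x\in X}[x)$, under the stated preorder, are just the upsets $X_{u\leq}$ of $\mathscr{U}$ ordered by reverse inclusion. Its extension $g'$, together with the distributivity computation for joins, is your $\bar f$.

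The genuine difference lies in the facts about $\rightarrow$. The paper never writes down the implication of the free lattice. It gets $\iota(x\rightarrow_U y)=\iota(x)\rightarrow\iota(y)$ from the meet-irreducibility of $[y)$. It proves condition~(1) of Definition~\ref{def:canonical} abstractly:
\begin{itemize}
\item it writes $a\rightarrow(b\wedge c)=\bigwedge_{i\in I}d_i$ with each $d_i$ meet-irreducible;
\item it notes that $a\lor d_i$ again lies in $\mathscr{U}$, hence is meet-prime by distributivity;
\item it splits $I$ according to whether $b\leq a\lor d_i$ or $c\leq a\lor d_i$.
\end{itemize}
You instead use the explicit formula $X\rightarrow Y=\{z:(\forall x\in X)[z\lor x\in Y]\}$ of $\up(\mathscr{U})$. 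For $X=[u)$ this collapses to $\{z: z\lor u\in Y\}$, so both the preservation of $\rightarrow$ and the union-preservation required by (1) are immediate.

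What each route buys: yours is shorter, and the computation does not use finiteness (it is valid for principal upsets in $\up$ of any implicative upper semilattice). The paper's is representation-free: in any finite distributive lattice, an element $a$ satisfies (1) as soon as $a\lor d$ is meet-irreducible for every meet-irreducible $d$.

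On the convention you flagged, nonempty upsets is the right choice, for two reasons. First, it makes $\iota$ preserve $1$, which Corollary~\ref{cor:all} uses. Second, it is needed for the exact match with the meet-irreducibles: under the paper's definition, the empty upset, being the top, would be vacuously meet-irreducible without being a generator. The paper's description of $Fr_U$ does not explicitly exclude $X=\emptyset$, so your version is the more careful one here. Just add the one-line check that nonempty upsets are closed under $\rightarrow$: the top of $\mathscr{U}$ lies in $X\rightarrow Y$ whenever $Y\neq\emptyset$. Then the $\up(\mathscr{U})$ formula still computes the implication of that sublattice.
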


\begin{proof}

We hope that a short description of $Fr^{L_d}_{\Usl_{\rightarrow}} 
(\mathscr{U})$ is in order. Let $\mathscr{U}=\langle U, \lor_U, \rightarrow_U \rangle$
be an implicative upper semilattice with partial ordering relation 
$\leq_U$. For a subset $X \subseteq U$, denote for simplicity 
$[X)=X_{u\leq_U}$, i.e.\ the upwards $\leq_U$-closure of $X$ in $\mathscr{U}$, 
which is a generic element of $\up(U)$. The lowercase Latin letters $x, y, z$ 
will vary through the elements of $U$, and we will write $[x)$ for $[\{x\})$. 

Let $Fr_U$ be the set of the formal expressions that one can obtain from the 
operation symbol $\wedge$, and the elements of $\mathscr{U}$ of the form 
$[x)$ for $x \in U$, i.e., 
\[
Fr_U=\{\bigwedge_{x \in X} [x): X \subseteq U,\; X \textrm{ finite}\},
\]
Using the $\leq_U$ of $\mathscr{U}$, define on $Fr_U$ the following
preordering relation for finite subsets $X,Y \subseteq U$:
\[
\bigwedge_{x \in X} [x) \leq \bigwedge_{y \in Y} [y) \Leftrightarrow
\QA{y \in Y}\QE{x \in X}\big[x \leq_U y].
\]
It is not difficult to see that the equivalence relation induced by this 
preordering partitions $Fr_U$ into equivalence classes that form a 
distributive lattice with $0$ and $1$, where the lattice operations of meet 
and join are defined unambiguously on representatives of the equivalence 
classes by   
\begin{align*}
\bigwedge_{x \in X} [x) \wedge \bigwedge_{y \in Y} [y)
&= \bigwedge_{z \in X \cup Y}[z),\\
\bigwedge_{x \in X} [x) \lor \bigwedge_{y \in Y} [y)
&= \bigwedge_{x \in X, y \in Y} [x \lor_U y).
\end{align*}
where $\lor_U$ is the join operation of $\mathscr{U}$.

We claim that this lattice (a sub-upper semilattice of $\up(\mathscr{U})$) is 
the desired free distributive lattice  $Fr^{L_d}_ 
{\Usl_{\rightarrow}}(\mathscr{U})$. To verify this, as well as the additional 
claims of the lemma, in order to simplify notation we will work directly with 
the elements of $Fr_U$, rather than with their equivalence classes. One can 
easily see that the function $\iota(x)=[x)$ is an upper semilattice embedding 
of $\mathscr{U}$ into $Fr^{L_d}_{\Usl_{\rightarrow}} (\mathscr{U})$. The 
elements of $Fr_U$ are of the form $\bigwedge_{x \in X} [x)$ with $X$ 
finite, so that the meet-irreducible elements of $Fr^{L_d}_ 
{\Usl_{\rightarrow}}(\mathscr{U})$ are exactly the elements in the image of 
the embedding $\iota$. With their meets they generate 
$Fr^{L_d}_{\Usl_{\rightarrow}} (\mathscr{U})$. Moreover, $Fr^{L_d}_ 
{\Usl_{\rightarrow}}(\mathscr{U})$ and $\iota$ satisfy the universal property 
of free objects: if $\mathcal{G}=\langle G, \wedge_G, \lor_G \rangle$ is a 
distributive lattice and $g: \mathscr{U} \rightarrow \mathcal{G}$ is an upper 
semilattice homomorphism, then there exists a unique homomorphism $g': 
Fr^{L_d}_ {\Usl_{\rightarrow}}(\mathscr{U}) \rightarrow \mathcal{G}$ such 
that $g' \circ \iota=g$, namely $g'(\bigwedge_{x \in X} [x))=\bigwedge_G 
\{g(x): x \in X\}$. To see for instance that $g'$ preserves least upper 
bounds, we observe 
\begin{align*}
g'\left(\bigwedge_{x \in X} [x) \lor \bigwedge_{y \in Y} [y)\right)
&=g'\left(\bigwedge_{x \in X, y \in Y} [x_i \lor_U y)\right)\\
& = \bigwedge_G \{g(x \lor_U y): x \in X, y \in Y\}\\
& = \bigwedge_G \{g(x) \lor_G g(y): x \in X, y \in Y\}\\
& = \bigwedge_G \{g(x): x \in X\} \lor_G \bigwedge_G \{g(y): y \in Y\}\\
& = g'\left(\bigwedge_{x \in X} [x)\right) \lor_G g'\left(\bigwedge_{y \in Y} [y)\right).
\end{align*}
Although the operation of implication plays no role in the definition of 
$g'$, note that if $Fr^{L_d}_ {\Usl_{\rightarrow}}(\mathscr{U})$ is finite, 
then it is a Brouwer algebra since it is a finite distributive lattice. A 
closer inspection shows that in this case $\iota$ preserves the operation of 
implication as well. Indeed, if $\bigwedge_{z \in Z} [z) \in Fr_U$ then as $[y)$ is meet irreducible and $\bigwedge_{z \in Z}[x) \lor  [z)=\bigwedge_{z \in Z}[x \lor_U z)$ we have
\begin{align*}
[y) \leq [x) \lor \bigwedge_{z \in Z} [z) 
      &\Leftrightarrow \QA{z \in Z} [y \leq_U x \lor_U z]\\
      &\Leftrightarrow \QA{z \in Z}[x\rightarrow_U y \leq_U z]\\
      &\Leftrightarrow [x \rightarrow_U  y) \leq \bigwedge_{z \in Z} [z) .
\end{align*}
This shows that $\iota(x \rightarrow_U  y)=\iota(x) \rightarrow \iota(y)$, 
where the implication on the right side denotes the arrow operation of 
$Fr^{L_d}_{\Usl_{\rightarrow}} (\mathscr{U})$. Therefore the generators of 
$Fr^{L_d}_ {\Usl_{\rightarrow}}(\mathscr{U})$ (the elements of the form 
$\iota(x)$, for $x \in U$) satisfy properties (2) and (3) of the definition 
of a canonical set. 
  
We must show at this point that (1) of the definition of a canonical set is 
satisfied. For the sake of further simplifying notation, in the rest of the 
proof we identify the elements of $\mathscr{U}$ with their images in 
$Fr^{L_d}_{\Usl_{\rightarrow}}(\mathscr{U})$ under the embedding $i$, and we 
use lowercase Latin letters as variables for the elements of 
$Fr^{L_d}_{\Usl_{\rightarrow}}(\mathscr{U})$. Property (1) amounts to showing 
that if $a\in \mathscr{U}$, then for every $b,c$ we have $a\rightarrow b\wedge 
c= (a\rightarrow b)\wedge (a\rightarrow c)$.  Suppose that $a \rightarrow b 
\wedge c=\bigwedge_{i \in I} d_i$ where each $d_i$ is meet-irreducible (it is 
well known that in a finite distributive lattice any element can be uniquely 
represented as the meet of an antichain of meet-irreducible elements). We 
have 
\[
b\wedge c \leq a \lor \bigwedge_{i \in I} d_i= \bigwedge_{i \in I} (a \lor d_i).
\] 
Now, each $a \lor d_i$ is meet-irreducible, being an element of $\mathscr{U}$, 
hence $b \leq a \lor d_i$ or $c \leq a \lor d_i$. Therefore, we can pick two 
subsets $J, K \subseteq I$ such that $b \leq \bigwedge_{j \in J} (a \lor 
d_j)=a \lor  \bigwedge_{j \in J} d_j$ and $c \leq \bigwedge_{k \in K} (a \lor 
d_k)=a \lor  \bigwedge_{k \in K} d_k$. Hence $a\rightarrow b \leq 
\bigwedge_{j \in J} d_j$ and $a\rightarrow c \leq \bigwedge_{k \in K} d_k$, 
giving $(a \rightarrow b) \wedge (a \rightarrow c)\leq \bigwedge_{i \in I} (a 
\lor d_i)=  a\rightarrow b\wedge c$. On the other hand, by distributivity and 
easy calculations we get $b \wedge c \leq a \lor ((a \rightarrow b) \wedge (a 
\rightarrow c))$, which gives $a \rightarrow b \wedge c \leq (a \rightarrow 
b) \wedge (a \rightarrow c)$, thus showing property (1) of the definition of 
a canonical set. 
\end{proof}  

Let now $I_n=\{1,2, \ldots,n\}$, and, for any given set $X$, let 
$\mathcal{P}(X)$ denote the power set of $X$, and let $\mathscr{P}(X)= 
\langle \mathcal{P}(X), \supseteq \rangle$. 

\begin{definition}
Let $\mathscr{B}_n$  denote the Brouwer algebra  
$Fr^{L_d}_{\Usl_{\rightarrow}} (\mathscr{P}(I_n))$. 
\end{definition}

The intermediate propositional logic given by $\bigcap_{n>0} 
\Th(\mathcal{B}_n)$ is known as the \emph{Medvedev logic}. The following 
holds: 

\begin{lemma}\label{lem:free}
$\bigcap_{n>0} \bigcap_{x \in \mathscr{B}_n}
      \Th([0,x]_{\mathscr{B}_n})= \IPC$.
\end{lemma}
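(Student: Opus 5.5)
The inclusion $\IPC \subseteq \bigcap_{n>0}\bigcap_{x}\Th([0,x]_{\mathscr{B}_n})$ is immediate, since $\IPC \subseteq \Th(\mathscr{L})$ for every Brouwer algebra $\mathscr{L}$. For the reverse inclusion I will use the finite model property of $\IPC$: every non-theorem $\alpha$ fails in some finite Brouwer algebra $\mathscr{L}$. It then suffices to find, for each finite $\mathscr{L}$, some $n$, some $x\in\mathscr{B}_n$ and a Brouwer algebra homomorphism from $[0,x]_{\mathscr{B}_n}$ onto $\mathscr{L}$. By Fact~\ref{fact:1} this gives $\Th([0,x]_{\mathscr{B}_n})\subseteq \Th(\mathscr{L})$, so $\alpha$ is refuted in $[0,x]_{\mathscr{B}_n}$. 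This is the strategy of Skvortsova \cite{Skvortsova:intuitionism}, and I will follow it.

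\emph{Step 1 (the map into $\mathscr{L}$).} Let $m_1,\dots,m_n$ list the meet-irreducible elements of $\mathscr{L}$, so every element of $\mathscr{L}$ is a meet of some of them. I will define an upper semilattice map $g:\mathscr{P}(I_n)\to \mathscr{L}$ by $g(X)=\bigvee_{i\notin X} m_i$, adjusted so that $g$ respects joins for the reverse-inclusion order. By the universal property established in the proof of Lemma~\ref{lem:free-can}, $g$ extends to a lattice homomorphism $g':\mathscr{B}_n\to\mathscr{L}$, namely $g'(\bigwedge_{X\in\mathcal{X}}[X))=\bigwedge g(X)$. It is onto because each $m_i$ is a value of $g$ and $g'$ preserves meets. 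I then take $x$ to be the least element with $g'(x)=1_{\mathscr{L}}$, or more simply a suitable join of generators mapping to $1$, and restrict $g'$ to $[0,x]_{\mathscr{B}_n}$. If this restriction is not literally onto with top going to top, I will compose with the map $u\mapsto u\wedge x$ and use the second part of Fact~\ref{fact:1}.

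\emph{Step 2 (preservation of implication) — the main obstacle.} The problem is that $g'$ need not commute with $\rightarrow$. My plan is to compute implications in $\mathscr{B}_n$ on generators via Lemma~\ref{lem:canonical-new}(2), which applies because Lemma~\ref{lem:free-can} says the generators form a canonical set:
\[
\bigwedge_i a_i\rightarrow \bigwedge_j b_j=\bigvee_i\bigwedge_j (a_i\rightarrow b_j).
\]
In $\mathscr{L}$ the identity $a\wedge b\rightarrow c=(a\rightarrow c)\vee(b\rightarrow c)$ always holds. Hence $g'$ preserves $\rightarrow$ as soon as two conditions hold. First, $g$ preserves $\rightarrow$ between generators. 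Second, for each generator value $g(a)$ the map $c\mapsto g(a)\rightarrow c$ distributes over meets in $\mathscr{L}$; this is the analogue of property (1) of Definition~\ref{def:canonical}.

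The second condition fails for general elements of $\mathscr{L}$. This is why I restrict to an interval, and why I expect to need the freedom to enlarge $n$, for instance by using the Boolean algebra of subsets of the set of prime filters (join-prime elements) of $\mathscr{L}$ rather than just $I_n$. The choice of $g$ has to ensure that generators are sent to elements $g(a)$ for which $g(a)\rightarrow -$ distributes over meets. I would first try $g(X)$ equal to the meet of all elements above a prescribed set of join-prime elements, and then verify the two preservation conditions directly using Lemma~\ref{lem:canonical-new}(1).
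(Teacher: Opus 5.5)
Your first paragraph reduces the lemma to a claim that is false: that for every finite $\mathscr{L}$ there is a Brouwer homomorphism from some $[0,x]_{\mathscr{B}_n}$ onto $\mathscr{L}$. So Step~2, which you leave as a plan, cannot be completed by any choice of $g$, $n$ or $x$.

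By the description in the proof of Lemma~\ref{lem:free-can}, $\mathscr{B}_n\cong\up(\langle\mathcal{P}(I_n),\supseteq\rangle)$. Hence $[0,x]_{\mathscr{B}_n}$, with the restricted operations, is isomorphic to $\up(\langle F,\supseteq\rangle)$, where $F\subseteq\mathcal{P}(I_n)$ is the complement of the up-set representing $x$; this $F$ is a nonempty family closed under supersets. The Brouwer homomorphic images of $\up(\langle F,\supseteq\rangle)$ are the algebras $\up(G)$ with $G$ an up-set of $\langle F,\supseteq\rangle$. A rooted such $G$ has the form $\{y\in F: y\subseteq z\}$. If it has a single maximal point $a$ (a single $\subseteq$-minimal member), it is the whole Boolean interval $\{y: a\subseteq y\subseteq z\}$. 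Consequently the four-element chain, whose dual frame is a three-element chain, is a homomorphic image of no $[0,x]_{\mathscr{B}_n}$. Moreover, the homomorphic images with rooted dual are again intervals of some $\mathscr{B}_{n'}$, so this route could at best restate the lemma.

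The obstruction is already visible in your Step~2. Since $g'$ is onto and preserves meets, every meet-irreducible of $\mathscr{L}$ is a value of $g$. If $g$ preserved $\rightarrow$, its image would satisfy Peirce's law $((p\rightarrow q)\rightarrow p)\rightarrow p=0$, which holds in the Boolean algebra $\mathscr{P}(I_n)$. In the chain $0<a<b<1$, however, $((a\rightarrow b)\rightarrow a)\rightarrow a=(b\rightarrow a)\rightarrow a=0\rightarrow a=a\neq 0$. Separately, $u\mapsto u\wedge x$ is in general not a Brouwer homomorphism; Fact~\ref{fact:1} uses $u\mapsto x\vee u$.

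What is needed is the opposite direction: embed a finite algebra refuting $\alpha$ into some $[0,x]_{\mathscr{B}_n}$ and use the one-one half of Fact~\ref{fact:1}. Dually, you must show that every finite rooted frame (finite trees suffice) is a $p$-morphic image of some $\langle F,\supseteq\rangle$ with $F$ closed under supersets. For example, the diamond $F=\{y\subseteq I_3: 1\in y\}$ maps onto the three-element chain by sending $y$ to the point of height $3-|y|$. So the four-element chain does embed into $[0,[\{2,3\})]_{\mathscr{B}_3}$.

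In general one argues by induction on trees. Suppose the subtrees above the root are $p$-morphic images of $F_1,\dots,F_k$ under $f_1,\dots,f_k$, on disjoint label sets $L_1,\dots,L_k$. Let $*$ be a fresh padding label. Let $F$ consist of those $y\subseteq L_1\cup\dots\cup L_k\cup\{*\}$ such that, for some $i$, $L_j\subseteq y$ for all $j\neq i$ and $y\cap L_i\in F_i$. Send $y$ to the new root if $y\supseteq L_1\cup\dots\cup L_k$, and otherwise to $f_i(y\cap L_i)$ for the unique such $i$. The base case is $\{\{a\},\{a,*\}\}$. The padding label guarantees that each root has a preimage other than the full label set, which is what makes the map a $p$-morphism.

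This representation is what the paper imports by citing Skvortsova's Lemma~3 and Kuyper's Propositions~4.6--4.7, and nothing in your plan supplies it.
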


\begin{proof}
See \cite[Lemma~3]{Skvortsova:intuitionism}, the remark following it, and 
\cite[Propositions~4.6~and~4.7]{Kuyper-Medvedev}. 
\end{proof}

\subsection{The computability-theoretic  ingredients}
\label{ssct:ingredients} We now isolate an important class of Dyment degrees, 
which constitute s canonical set in $\mathscr{M}_D$. 

\begin{definition}
A $D$-mass problem $\MP{A} \subseteq \MP{P}$ is said to be 
\emph{Dyment-Muchnik} if $\MP{A}$ is nonempty and $\MP{A}$ is upwards 
$\leq_e$-closed. A \emph{Dyment-Muchnik} degree is the degree of some 
Dyment-Muchnik mass problem. 
\end{definition}

\begin{lemma}\label{lem:can1}
The Dyment-Muchnik degrees form a canonical set in $\mathscr{M}_D$.
\end{lemma}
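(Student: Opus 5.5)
The plan is to verify the three clauses of Definition~\ref{def:canonical} directly, working with upwards $\leq_e$-closed representatives. Two tools do most of the work. The first is upward closure: if $\MP{A}$ is Dyment-Muchnik and $\theta \geq_e \phi$ for some $\phi\in\MP{A}$, then $\theta\in\MP{A}$. The second is a \emph{finite-patching} device. Given a finite partial function $\delta$ and $\psi\in\MP{P}$, let $\psi^\delta$ agree with $\delta$ on $\dom\delta$ and code $\psi$ on $\omega\smallsetminus\dom\delta$ via a fixed computable bijection. Then $\psi^\delta\geq_e\psi$ uniformly in $\delta$, so $\psi^\delta\in\MP{A}$ whenever $\psi\in\MP{A}$, and $\operatorname{graph}(\delta)\subseteq\operatorname{graph}(\psi^\delta)$. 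This device is the partial-function substitute for the "finite modification of a total function" used in the Medvedev case.

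\emph{Clause (3).} Let $\MP{A},\MP{B}$ be Dyment-Muchnik. I will show $\MP{A}\oplus\MP{B}\equiv_D \MP{A}\cap\MP{B}$, which is nonempty and upwards closed. Since $\phi\oplus\psi\geq_e\phi,\psi$, the identity operator maps $\MP{A}\oplus\MP{B}$ into $\MP{A}\cap\MP{B}$. Conversely, $\chi\mapsto\chi\oplus\chi$ maps $\MP{A}\cap\MP{B}$ into $\MP{A}\oplus\MP{B}$. For implication, I claim $\MP{A}\to\MP{B}\equiv_D\MP{C}$, where $\MP{C}=\{\chi:(\forall\phi\in\MP{A})[\chi\oplus\phi\in\MP{B}]\}$; this set is upwards closed, and it is nonempty because $\MP{B}\subseteq\MP{C}$ when $\MP{A}\neq\emptyset$, while $\MP{C}=\MP{P}$ otherwise. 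To reduce $\MP{C}$ to $\MP{A}\to\MP{B}$, take $\cat{e}{\chi}$ in $\MP{A}\to\MP{B}$. Then $\Phi_e(\chi\oplus\phi)\in\MP{B}$ and $\chi\oplus\phi\geq_e\Phi_e(\chi\oplus\phi)$, so $\chi\oplus\phi\in\MP{B}$ by upward closure; hence the operator stripping the first value maps $\cat{e}{\chi}\mapsto\chi\in\MP{C}$. In the other direction, $\chi\mapsto\cat{i}{\chi}$, with $\Phi_i$ the identity operator, works.

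\emph{Clause (2).} Suppose $[\MP{A}]_D=[\MP{B}]_D\wedge[\MP{C}]_D$ with $\MP{A}$ Dyment-Muchnik, witnessed by $\Phi$ mapping $\MP{A}$ into $\cat{0}{\MP{B}}\cup\cat{1}{\MP{C}}$. Pick $\chi\in\MP{A}$ and a finite $\delta\subseteq\chi$ such that $\langle 0,i\rangle\in\Phi(\operatorname{graph}\delta)$; say $i=0$. For every $\psi\in\MP{A}$ we have $\psi^\delta\in\MP{A}$ and $\Phi(\psi^\delta)$ begins with $0$. Therefore $\psi\mapsto\Phi(\psi^\delta)$ with its first value deleted is a uniform $e$-operator witnessing $\MP{B}\leq_D\MP{A}$. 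Since also $\MP{A}\leq_D\MP{B}$ by assumption, $[\MP{A}]_D=[\MP{B}]_D$, which gives meet-irreducibility.

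\emph{Clause (1).} This is the main obstacle. The inequality $[\MP{A}]\to([\MP{B}]\wedge[\MP{C}])\leq([\MP{A}]\to[\MP{B}])\wedge([\MP{A}]\to[\MP{C}])$ holds in any Brouwer algebra, so the work is the converse. Given $\cat{e}{\chi}\in\MP{A}\to(\MP{B}\wedge\MP{C})$, an enumeration operator must produce from $\chi$ alone an element of $\cat{0}{(\MP{A}\to\MP{B})}\cup\cat{1}{(\MP{A}\to\MP{C})}$. The natural strategy is to search for a finite $D\subseteq\operatorname{graph}\chi$ and a finite function $\delta$ with $\langle0,i\rangle\in\Phi_e(D\oplus\delta)$. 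Such a witness always exists, since $\psi^\delta\in\MP{A}$ for any $\psi\in\MP{A}$. Having found one, output $\cat{i}{\cat{e'}{(\chi\oplus\delta)}}$, where $\Phi_{e'}((\chi\oplus\delta)\oplus\psi)=\Phi_e(\chi\oplus\psi^\delta)$ with the first value removed.

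The difficulty is that an enumeration operator cannot "choose the first witness": the output must be single-valued no matter which witnesses appear. So I must show that all witnesses for a given $\chi$ yield the same $i$. If $\delta_1,\delta_2$ are compatible, this is immediate: a common extension $\theta\in\MP{A}$ of $\delta_1\cup\delta_2$ would make $\Phi_e(\chi\oplus\theta)$ contain both $\langle0,0\rangle$ and $\langle0,1\rangle$, contradicting $\chi\oplus\theta\in\dom\Phi_e$. For incompatible $\delta$'s, I would first pass to a $D$-equivalent index. Replace $e$ by $\hat e$ with $\Phi_{\hat e}(\chi\oplus\phi)=\Phi_e(\chi\oplus\hat\phi)$, where $\hat\phi$ is a uniformly $e$-equivalent, redundant coding of $\phi$. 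Then restrict the search to finite patches $\delta$ lying on a fixed computable infinite "free" set of positions not used by the coding. Any two such patches can be separated onto disjoint free positions, and hence made jointly consistent. This restores the compatibility argument, and it is the step I expect to require the most care to set up correctly.
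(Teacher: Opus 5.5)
Your arguments for clauses (2) and (3) are fine. For (2), precomposing $\Phi$ with the fixed patch $\psi\mapsto\psi^\delta$ is shorter than the paper's route, which instead shows that every element of $\MP{A}$ is sent to the same side.

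The gap is in clause (1), at exactly the step you flag. You need that, for a fixed $\chi$, every finite $\delta$ with $\langle 0,i\rangle\in\Phi_e(\chi\oplus\delta)$ yields the same $i$. For incompatible $\delta_1,\delta_2$ you give only a sketch: a ``redundant coding'' with ``free positions'' onto which patches can be ``separated.'' That is not an argument. You never say what $\Phi_{\hat e}$ does, why it stays single-valued on $\MP{A}$, or why its witnesses can be moved.

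No recoding is needed; the missing idea is a \emph{pivot} witness. Take any $\psi\in\MP{A}$ and let $\psi'$ be $\psi$ made undefined on the finite set $\dom(\delta_1)\cup\dom(\delta_2)$. Then $\psi'\equiv_e\psi$, so $\psi'\in\MP{A}$. Hence $\Phi_e(\chi\oplus\psi')(0)\downarrow=z$ via some finite $\beta\subseteq\psi'$. Now $\beta$ is compatible with both $\delta_1$ and $\delta_2$. Also $\psi'\cup\delta_1$ and $\psi'\cup\delta_2$ are finite variants of $\psi$, so they lie in $\MP{A}$. Your own compatible-case argument then gives $i_1=z=i_2$. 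This is precisely the paper's argument with $\psi^{\alpha_0,\alpha_y}$ in Sublemma~\ref{sublem:1}.

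Second, even granting uniqueness of $i$, the output $\cat{i}{\cat{e'}{(\chi\oplus\delta)}}$ is not single-valued. An enumeration operator enumerates the output for \emph{every} witness it finds. Different witnesses, for example $\delta\cup\{\langle x,0\rangle\}$ and $\delta\cup\{\langle x,1\rangle\}$, contribute incompatible $\delta$-components. Moreover, $\Phi_{e'}$ could not build $\psi^\delta$ from positive information about $\delta$, since it never learns $\dom(\delta)$.

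So the output must not depend on the witness, and once uniqueness is established it need not. For every $\psi\in\MP{A}$ one then has $\Phi_e(\chi\oplus\psi)(0)=i$. You can therefore output $\cat{i}{\cat{e''}{\chi}}$, where $\Phi_{e''}(\chi\oplus\psi)$ is $\Phi_e(\chi\oplus\psi)$ with its first value deleted, and no patching is required. This is essentially the paper's operator $\Gamma$ together with the splitting into $\MP{Z}_0,\MP{Z}_1$.
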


\begin{proof}
With three sublemmata we will verify the three conditions in the definition 
of a canonical set. The analogue of Sublemma~\ref{sublem:1} for the Medvedev 
lattice was independently proved by Skvortsova~\cite{Skvortsova:intuitionism} 
and Sorbi~\cite{Sorbi:Embedding}. Throughout the proofs of 
Sublemma~\ref{sublem:1} and Sublemma~\ref{sublem:2}, we will denote by 
$\mathcal{P}_F$ the set of \emph{finite} partial functions. 

\begin{sublemma}\label{sublem:1}
Let $\MP A, \MP{B}_0, \MP{B}_1, \MP Z$ be $D$-mass problems, with $\MP A$ 
being Dyment-Muchnik. If $\MP A \oplus \MP Z \geq_D \MP{B}_0 \meet \MP{B}_1$, 
then there are $\MP{Z}_0, \MP{Z}_1 \subseteq \MP Z $ such that 
\begin{itemize}
\item $\MP Z \equiv_D \MP{Z}_0 \meet \MP{Z}_1$ and
\item $\MP A \oplus \MP{Z}_i \geq_D \MP{B}_i$ for $i = 0,1$.
\end{itemize}
Hence, we have 
\[
\MP A \imp (\MP{B}_0 \meet \MP{B}_1) \equiv_D (\MP A \imp \MP{B}_0)
                     \meet (\MP A \imp \MP{B}_1).
\]
\end{sublemma}

\begin{proof}[Proof~of~Sublemma~\ref{sublem:1}] 
Let $\Psi$ be an enumeration operator such that $\MP A \oplus \MP Z \subseteq 
\dom(\Psi)$, and $\Psi(\MP A \oplus \MP Z) \subseteq 0^\smf \MP{B}_0 \cup 
1^\smf \MP{B}_1$.  

For $i = 0,1$, let
\begin{align*}
\MP{Z}_i = \{\chi \in \MP{Z} : (\exists \alpha \in \MP{P}_F)[\langle 0, i 
\rangle \in \Psi(\alpha \oplus \chi)]\}.
\end{align*}

Notice that $\MP{Z} \leq_D \MP{Z}_0 \andd \MP{Z}_1$ since $\MP{Z}_0, \MP{Z}_1 
\subseteq \MP Z$. 

Fix $\chi \in \MP{Z}$, and let $\psi \in \MP{A}$. Then $\Psi(\psi \oplus 
\chi)\downarrow$ and $\Psi(\psi \oplus \chi)(0) \downarrow \in \{0,1\}$. 
Suppose that $\Psi(\psi \oplus \chi)(0) \downarrow =0$. We will show in this 
case that for every $\alpha\in \MP{P}_F$, and every $y$, if $\langle 0, y 
\rangle \in \Psi(\alpha \oplus \chi)$ then $y=0$. Given $\alpha, \beta \in 
\MP{P}$, let $\psi^\alpha=\psi\smallsetminus \{\langle x,y \rangle: x \in 
\dom(\alpha)\}$ and $\psi^{\alpha,\beta}=(\psi^\alpha)^{\beta}$. By our 
assumptions there exists $\alpha_0\in \MP{P}_F$ such that $\langle 0, 0 
\rangle \in \Psi(\alpha_0 \oplus \chi)$. Assume, by contradiction, that there 
exist $\alpha_y\in \MP{P}_F$ and $y\ne 0$ such that $\langle 0,y\rangle \in 
\Psi(\alpha_y \oplus \chi)$. Now, $\psi^{\alpha_0, \alpha_y}\equiv_e \psi$, 
and since $\MP{A}$ is Dyment-Muchnik, we have that $\psi^{\alpha_0, 
\alpha_y}\in \MP{A}$, thus $\Psi(\psi^{\alpha_0, \alpha_y}\oplus 
\chi)\downarrow$, and thus for some $z\in \{0,1\}$ there exists $\beta_z \in 
\MP{P}_F$ such that $\beta_z \subseteq \psi^{\alpha_0, \alpha_y}$ and 
$\langle 0, z \rangle \in \Psi(\beta_z \oplus \chi)$. But this yields a 
contradiction: if $z=0$ then $\psi^{\alpha_0, \alpha_y} \cup \alpha_y$  is a 
partial function (since $\dom(\psi^{\alpha_0, \alpha_y}) \cap 
\dom(\alpha_y)=\emptyset$) which lies in $\MP{A}$ (since $\psi^{\alpha_0, 
\alpha_y} \cup \alpha_y\equiv_e \psi^{\alpha_0, \alpha_y}$), but 
$\Psi((\psi^{\alpha_0, \alpha_y} \cup \alpha_y)\oplus \chi)$ is not 
single-valued (since both $\langle 0, 0\rangle$ and $\langle 0, y\rangle$ lie 
in $\Psi((\psi^{\alpha_0, \alpha_y} \cup \alpha_y)\oplus \chi)$, being 
$\beta_z, \alpha_y \subseteq \psi^{\alpha_0, \alpha_y} \cup \alpha_y$), 
contradicting $(\psi^{\alpha_0, \alpha_y} \cup \alpha_y)\oplus \chi \in 
\dom(\Psi)$. If $z=1$ then a similar argument leads to showing that 
$\psi^{\alpha_0, \alpha_y} \cup \alpha_0$ lies in $\MP{A}$ but 
$(\psi^{\alpha_0, \alpha_y} \cup \alpha_0)\oplus \chi \notin \dom(\Psi)$. 

By symmetry, the previous argument shows that if we initially choose $\psi 
\in \MP{A}$ such that $\Psi(\psi \oplus \chi)(0) \downarrow =1$, then for 
every $\alpha\in \MP{P}_F$, and every $y$, if $\langle 0, y \rangle \in 
\Psi(\alpha \oplus \chi)$ then $y=1$. 

Let $\Gamma$ be the following enumeration operator: for $\chi \in 
\mathcal{P}$,  
\[
\Gamma(\chi) =\{\langle i, y\rangle: i>0 \,\&\,  
                \langle i-1,y\rangle \in \chi\} \cup 
\{\langle 0, y\rangle: \QE{\alpha \in \MP{P}_F}[\langle 0, y \rangle \in 
                \Psi(\alpha\oplus \chi)]\}.
\]
By the previous remarks it is clear that $\MP{Z} \subseteq \dom(\Gamma)$, and 
for every $\chi \in \mathcal{Z}$, we have that $\Gamma(\chi)(0)\downarrow \in 
\{0,1\}$. Therefore $\MP{Z} \geq_D \MP{Z}_0 \wedge \MP{Z}_1$ via $\Gamma$. 
The $e$-operator $\Phi(\psi \oplus \chi)$ such that $\Phi(\psi \oplus 
\chi)(y)=\Psi(\psi \oplus \chi)(y+1)$ for each $y$ provides a reduction 
$\MP A \oplus \MP{Z}_i \geq_D \MP{B}_i$ for both $i \in \{0,1\}$. 

To finish off, we must show that $\MP A \imp \MP{B}_0 \meet \MP{B}_1 \equiv_D 
(\MP A \imp \MP{B}_0) \meet (\MP A \imp \MP{B}_1)$. Well, in every Brouwer 
algebra we have $a\rightarrow b_0 \wedge b_1 \leq (a \rightarrow b_0) \wedge 
(a \rightarrow b_1)$ since $a \lor ((a \rightarrow b_0) \wedge (a \rightarrow 
b_1)) \geq b_0 \wedge b_1$. Therefore, $\MP{A} \rightarrow \MP{B}_0 \wedge 
\MP{B}_1 \leq_D (\MP{A} \rightarrow \MP{B}_0) \wedge (\MP{A} \rightarrow 
\MP{B}_1)$. On the other hand, by what proved in the first part of the 
sublemma taking $\mathcal{Z}$ to be $\MP A \imp \MP{B}_0 \meet \MP{B}_1$, 
from 
\[
\mathcal{B}_0 \wedge \mathcal{B}_1\leq_D \mathcal{A} \oplus 
(\mathcal{A} \rightarrow \mathcal{B}_0 \wedge \mathcal{B}_1)
\]
one gets that there exist $\mathcal{Z}_0, \mathcal{Z}_1$ such that $\MP A 
\imp \MP{B}_0 \meet \MP{B}_1 \equiv_D \MP{Z}_0 \meet \MP{Z}_1$, and $\MP{A} 
\rightarrow \MP{B}_i \leq_D \MP{Z}_i$ for $i= 0,1$. Hence 
\[
\MP A \imp \MP{B}_0 \meet \MP{B}_1 \geq_D (\MP A \imp \MP{B}_0) \wedge 
(\MP A \imp \MP{B}_1). \qedhere
\]
\end{proof}

\begin{sublemma}\label{sublem:2}
If $\MP{A}$ is a Dyment-Muchnik mass problem then its Dyment degree is 
meet-irreducible. 
\end{sublemma}

\begin{proof}[Proof~of~Sublemma~\ref{sublem:2}] 
Suppose that $\MP{A}$ is a Dyment-Muchnik mass problem and $\MP{A} \equiv_D 
\MP{B}_0 \wedge \MP{B}_1$. Let $\MP{A} \geq_D \MP{B}_0 \wedge \MP{B}_1$ via 
the enumeration operator $\Gamma$. We want to show that either 
$\MP{B}_0\leq_D \MP{A}$ or $\MP{B}_1\leq_D \MP{A}$. Let $\phi \in \MP{A}$. We 
claim that if $\Gamma(\phi)(0)\downarrow =0$ then $\Gamma(\MP{A}) \subseteq 
0^\smf \MP{B}_0$, and thus $\MP{B}_0 \leq_D \MP{A}$; and if 
$\Gamma(\phi)(0)\downarrow =1$  then $\Gamma(\MP{A}) \subseteq 1^\smf 
\MP{B}_1$, and thus $\MP{B}_1 \leq_D \MP{A}$.  

So, let us assume that $\Gamma(\phi)(0)\downarrow =0$: the case 
$\Gamma(\phi)(0)\downarrow =1$ is symmetric. So there exists $\alpha_0 \in 
\MP{P}_F$ with $\alpha_0 \subseteq \phi$ and $\langle 0, 0 \rangle \in 
\Gamma(\alpha_0)$. Suppose now that $\psi \in \mathcal{A}$ is another partial 
function such that $\Gamma(\psi)(0)\downarrow =1$, and let $\alpha_1 \in 
\MP{P}_F$ with $\alpha_1 \subseteq \psi$ and $\langle 0, 1 \rangle \in 
\Gamma(\alpha_1)$. Arguing as in the previous sublemma, we have that 
$\phi^{\alpha_0, \alpha_1} \in \mathcal{A}$, and thus $\Gamma(\phi^{\alpha_0, 
\alpha_1})(0)\downarrow \in \{0,1\}$. We may suppose that 
$\Gamma(\phi^{\alpha_0, \alpha_1})(0)\downarrow =0$, the other case being 
symmetric. But then $\{\langle 0,0\rangle, \langle 0,1\rangle\} \subseteq 
\Gamma(\phi^{\alpha_0, \alpha_1}\cup \alpha_1)$, which is a contradiction 
since $\phi^{\alpha_0, \alpha_1}\cup \alpha_1 \in \mathcal{A}$, and thus 
$\phi^{\alpha_0, \alpha_1}\cup \alpha_1 \in \dom(\Gamma)$.

It follows that $\Gamma(\psi) \in 0^\smf \MP{B}_0 \equiv_D \MP{B}_0$ for 
every $\psi \in \mathcal{A}$, as desired.    
\end{proof}

\begin{sublemma}\label{sublem:3}
On Dyment-Muchnik mass problems, the operation of least upper bound is given 
by intersection, which still yields a Dyment-Muchnik mass problem. If 
$\MP{B}$ is Dyment-Muchnik, then 
\[
\MP{A} \rightarrow \MP{B}= \{\psi:  \QA{\phi \in \mathcal{A}}[\phi \oplus 
\psi \in \mathcal{B}] \},
\]
which still is a Dyment-Muchnik mass problem. Hence the Dyment-Muchnik 
degrees form an sub-implicative upper semilattice of $\mathscr{M}_D$. 
\end{sublemma}

\begin{proof}[Proof~of~Sublemma~\ref{sublem:3}] 
To show the claim about $\rightarrow$, let $\mathcal{D}=\{\psi:  \QA{\phi \in 
\mathcal{A}}[\phi \oplus \psi \in \mathcal{B}]\}$. Clearly, 
$\mathcal{B}\leq_D \mathcal{A} \oplus \mathcal{D}$ since $\mathcal{A} \oplus 
\mathcal{D} \subseteq \mathcal{B}$. To show minimality of $\mathcal{D}$, 
assume that $\Phi$ is an enumeration operator reducing $\MP{B}$ to $\MP{A} \oplus 
\mathcal{Z}$. If $\psi \in \MP{Z}$ then for every $\phi \in 
\MP{A}$ we have $\Phi(\phi \oplus \psi)$ is defined and $\Phi(\phi \oplus 
\psi)\in \MP{B}$. But since $\MP{B}$ is Dyment-Muchnik, this implies that 
$\phi \oplus \psi\in \MP{B}$, and thus $\psi \in \MP{D}$, giving $\MP{Z} 
\subseteq \MP{D}$, and therefore $\MP{D} \leq_D \MP{Z}$. The claim about 
$\oplus$ being intersection is trivial. From this it follows that $\MP{A} 
\rightarrow \MP{B}$ and $\MP{A} \oplus \MP{B}$ are Dyment-Muchnik mass 
problems, if so are $\MP{A}$ and $\MP{B}$. 
\end{proof}

Putting the three sublemmata together we conclude that the Dyment-Muchnik 
degrees form a canonical set in $\mathscr{M}_D$. Indeed, 
Sublemma~\ref{sublem:1} shows that they satisfy item (1) of 
Definition~\ref{def:canonical}; Sublemma~\ref{sublem:2} shows that they 
satisfy item (2) of Definition~\ref{def:canonical}, and 
Sublemma~\ref{sublem:3} shows that they satisfy item (3) of 
Definition~\ref{def:canonical}. 
\end{proof}

\begin{corollary}\label{cor:free}
If $\{\mathbf{A}_i: i \in I\}$ and $\{\mathbf{B}_j: j \in J\}$ are 
Dyment-Muchnik degrees then, in $\mathscr{M}_D$, 
\[
\bigwedge_{i\in I} \mathbf{A}_i \rightarrow \bigwedge_{j \in J} \mathbf{B}_j =
      \bigvee_{i\in I}\bigwedge_{j \in J} (\mathbf{A}_i \rightarrow \mathbf{B}_j).
\] 
\end{corollary}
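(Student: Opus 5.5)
This is an immediate consequence of Lemma~\ref{lem:can1} combined with Lemma~\ref{lem:canonical-new}. By Lemma~\ref{lem:can1}, the collection $\mathscr{C}$ of Dyment-Muchnik degrees is a canonical set in the Brouwer algebra $\mathscr{M}_D$. So the plan is to instantiate Lemma~\ref{lem:canonical-new}(2) with $\mathscr{B}=\mathscr{M}_D$, $a_i=\mathbf{A}_i$ and $b_j=\mathbf{B}_j$. The stated identity is then exactly item~(2) of that lemma.

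The only point needing care is that the meets should be finite, or at least exist in $\mathscr{M}_D$. In the intended application the index sets $I$ and $J$ are finite, and I will read the statement that way, as in Lemma~\ref{lem:canonical-new}.

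I would also recall briefly why item~(2) holds, to make the dependence on canonicity transparent. First, in any Brouwer algebra $(a\wedge a')\rightarrow c=(a\rightarrow c)\lor(a'\rightarrow c)$. Iterating this over finite $I$ gives
\[
\bigwedge_{i\in I}\mathbf{A}_i\rightarrow \mathbf{C}=\bigvee_{i\in I}(\mathbf{A}_i\rightarrow \mathbf{C}).
\]
Second, for each fixed $i$, clause~(1) of canonicity (Sublemma~\ref{sublem:1}) gives $\mathbf{A}_i\rightarrow(\mathbf{B}\wedge\mathbf{B}')=(\mathbf{A}_i\rightarrow\mathbf{B})\wedge(\mathbf{A}_i\rightarrow\mathbf{B}')$. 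By induction on $|J|$ this distributes $\mathbf{A}_i\rightarrow{}$ over the finite meet $\bigwedge_{j\in J}\mathbf{B}_j$. Substituting the second identity into the first yields the corollary.

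There is essentially no obstacle. The one subtlety worth a remark is the induction on $|J|$: clause~(1) of Definition~\ref{def:canonical} only requires $a\in\mathscr{C}$, while $b,c$ are arbitrary. The inner meets $\bigwedge_{j\in J'}\mathbf{B}_j$ need not be Dyment-Muchnik degrees, but they do not have to be. The degenerate cases $I=\emptyset$ or $J=\emptyset$ (empty meet $=\mathbf{1}_D$) can be excluded or checked directly.
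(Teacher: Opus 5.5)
Your proposal is correct and follows the paper's own argument: the paper proves this corollary in one line by combining Lemma~\ref{lem:can1} (the Dyment-Muchnik degrees form a canonical set in $\mathscr{M}_D$) with Lemma~\ref{lem:canonical-new}(2). Your sketch of why item~(2) holds, together with your remark that $I$ and $J$ should be read as finite, matches the paper's proof of that lemma.
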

\begin{proof}
By Lemma~\ref{lem:canonical-new} and Lemma~\ref{lem:can1}.
\end{proof}

\begin{corollary}
Let $A$, $\{x_i: i \in I\}$, and $\alpha$ be as in Definition~\ref{def:alpha} 
relative to the pre-upper semilattice $\mathscr{U}=\mathscr{P}_e$. Then 
each $D$-mass problem $\alpha(X)$ is Dyment-Muchnik. 
\end{corollary}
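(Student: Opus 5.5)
The claim follows directly from the definitions, so the plan is a two-step verification. In the setting of Definition~\ref{def:alpha}, the pre-upper semilattice is $\mathscr{U}=\mathscr{P}_e=\langle \mathcal{P},\oplus,\leq_e\rangle$. Here $A\subseteq\mathcal{P}$ is downwards $\leq_e$-closed, $\{x_i: i\in I\}\subseteq A$ is a strong $u$-antichain, and
\[
\alpha(X)=A^c\cup\{x_i: i\in X\}_{u\leq_e}.
\]
Being Dyment-Muchnik means two things: the set is upwards $\leq_e$-closed and nonempty. I will check each in turn.

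First, upward closure. By Lemma~\ref{lem:alphab}, $\alpha(X)$ is upwards $\leq_e$-closed. To spell it out, $A^c$ is upwards closed because it is the complement of a downwards closed set: if $\phi\in A^c$ and $\phi\leq_e\psi$, then $\psi\in A$ would force $\phi\in A$. The set $\{x_i: i\in X\}_{u\leq_e}$ is upwards closed by definition. A union of upwards closed sets is upwards closed, so $\alpha(X)$ is upwards closed, and regarded as a $D$-mass problem it is upwards $\leq_e$-closed in $\mathcal{P}$.

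Second, nonemptiness, which is the only point that needs care. It is enough to show $A^c\neq\emptyset$, since $A^c\subseteq\alpha(X)$ for every $X$, including $X=\emptyset$. If $I$ has two distinct indices $i\neq j$, then $x_i\oplus x_j\notin A$ by the strong $u$-antichain property, so $A^c\neq\emptyset$.

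In the degenerate case $|I|\leq 1$, I would argue as follows. If $A\neq\mathcal{P}$, then $A^c\neq\emptyset$ and we are done. If $A=\mathcal{P}$, then $\alpha(\emptyset)=\emptyset$, and the statement fails for $X=\emptyset$. So the corollary must be read under the standing nontriviality convention implicit in Lemma~\ref{lem:algebraic}, namely $\alpha(\emptyset)\neq\emptyset$, i.e.\ $A\neq\mathcal{P}$. This convention holds automatically whenever $|I|\geq2$, which is the case of interest. With that convention, both conditions hold, and each $\alpha(X)$ is Dyment-Muchnik.
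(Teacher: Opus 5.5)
Your proof is correct. On upward closure it is the paper's proof, which consists only of a citation of Lemma~\ref{lem:alphab}.

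Where you go further is nonemptiness. The definition of a Dyment-Muchnik mass problem requires it, but the paper's proof never mentions it. Your treatment is right: $\alpha(X)\ni x_i$ for any $i\in X$, while $\alpha(\emptyset)=A^c$ is nonempty exactly when $A\neq\mathcal{P}$. The strong $u$-antichain condition forces $A\neq\mathcal{P}$ once $|I|\ge 2$. Your counterexample is also genuine. $A=\mathcal{P}$ is downwards $\leq_e$-closed, any family with $|I|\le 1$ is vacuously a strong $u$-antichain in it, and then $\alpha(\emptyset)=\emptyset$ is not Dyment-Muchnik. So the corollary as literally stated needs an extra hypothesis, and your version is more careful than the paper's.

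One correction: the hypothesis $A\neq\mathcal{P}$ is not a convention implicit in Lemma~\ref{lem:algebraic}. That lemma works in $\up(\mathscr{U})$, where $\emptyset$ is a legitimate element (the top). The interval $[\alpha(I),\alpha(\emptyset)]$ is nontrivial whenever $I\neq\emptyset$, even if $A=\mathcal{P}$, so the paper's standing assumption $0\neq 1$ does not exclude your degenerate case. You should add $A\neq\mathcal{P}$ as an explicit hypothesis.

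That hypothesis is satisfied where the corollary is applied. In Theorem~\ref{thm:main2}, $\mathcal{A}=(\{(g\oplus h)'\}_{u\le_e})^c$, so $(g\oplus h)'\in\mathcal{A}^c$. This direct check is really needed there, because $n=1$ (hence $|I|=1$) is allowed, and your $|I|\ge 2$ argument does not cover that case.
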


\begin{proof}
By Lemma~\ref{lem:alphab}, the $D$-mass problem $\alpha(X)$ is 
upwards~$\leq_e$-closed. 
\end{proof}

\begin{corollary}\label{cor:all}
For every $n>0$, if $\{\psi_i: i \in I_n\}$ is a strong $u$--antichain in 
some downwards $\leq_e$-closed subset $\mathcal{A}$ of $\mathcal{P}$ and 
$\alpha: \mathcal{P}(I_n) \longrightarrow \up(\mathscr{P}_e)$ is as in 
Lemma~\ref{lem:algebraic}, then there exists an embedding  $\gamma: 
\mathscr{B}_n \xhookrightarrow{\mathbb{B}} \Big[[\alpha(I_n)]_D, 
[\alpha(\emptyset)]_D \Big]_{\mathscr{M}_D}$. 
\end{corollary}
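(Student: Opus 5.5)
The plan is to use the universal property of the free distributive lattice $\mathscr{B}_n=Fr^{L_d}_{\Usl_{\rightarrow}}(\mathscr{P}(I_n))$, as described in the proof of Lemma~\ref{lem:free-can}, and then check that the resulting lattice map preserves $\rightarrow$ and is one-one. Let $a=[\alpha(I_n)]_D$ and $b=[\alpha(\emptyset)]_D$. By Lemma~\ref{lem:algebraic} together with Theorem~\ref{thm:iso-usl-muchnik}, we view $\alpha$ as a map into $\mathscr{M}_{D,w}\simeq\up(\mathscr{P}_e)$. Composing with the map $\MP{X}\mapsto[\MP{X}]_D$, which sends an upwards $\leq_e$-closed nonempty set to its Dyment degree, we obtain a map $g:\mathscr{P}(I_n)\to[a,b]_{\mathscr{M}_D}$, $g(X)=[\alpha(X)]_D$. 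Each $\alpha(X)$ is Dyment-Muchnik by the previous corollary; nonemptiness holds because $A^c\ne\emptyset$, and otherwise one interprets $\mathbf{1}$ suitably.

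By Sublemma~\ref{sublem:3}, joins and implications of Dyment-Muchnik problems in $\mathscr{M}_D$ are computed by $\cap$ and by $\{\psi:\forall\phi\in\MP{A}\,[\phi\oplus\psi\in\MP{B}]\}$. These are exactly the operations of $\up(\mathscr{P}_e)$. Moreover, for upwards closed sets, $\MP{A}\leq_D\MP{B}$ iff $\MP{B}\subseteq\MP{A}$: one direction uses the identity operator, and the other holds because $\Phi(\psi)\le_e\psi$ lands in $\MP{A}$. Hence $g$ is an order embedding that preserves $\lor$ and $\rightarrow$. The arrow in the interval $[a,b]$ is the restriction of the arrow of $\mathscr{M}_D$, provided the values lie in the interval, which holds since the values of $\alpha$ do. Thus $g$ is an $\Usl_{\rightarrow}$-embedding that also preserves top and bottom. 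The universal property then gives a lattice homomorphism $\gamma:\mathscr{B}_n\to[a,b]$ with $\gamma(\bigwedge_{X\in\mathcal{F}}[X))=\bigwedge_{X\in\mathcal{F}}g(X)$, the meet being taken in $\mathscr{M}_D$. It preserves $0$ and $1$ of the interval.

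For injectivity, by Lemma~\ref{lem:can1} the $g(X)$ lie in a canonical set, so Lemma~\ref{lem:canonical-new}(1) gives
\[
\bigwedge_{X\in\mathcal{F}}g(X)\le\bigwedge_{Y\in\mathcal{G}}g(Y)\iff\QA{Y\in\mathcal{G}}\QE{X\in\mathcal{F}}[g(X)\le g(Y)].
\]
Since $g$ is an order embedding, the right side is exactly the defining preorder of $Fr_U$, so $\gamma$ is an order embedding. For $\rightarrow$, Corollary~\ref{cor:free} in $\mathscr{M}_D$ gives
\[
\bigwedge_i g(X_i)\rightarrow\bigwedge_j g(Y_j)=\bigvee_i\bigwedge_j\bigl(g(X_i)\rightarrow g(Y_j)\bigr)=\bigvee_i\bigwedge_j g(X_i\rightarrow Y_j).
\]
The same identity holds in $\mathscr{B}_n$ by Lemma~\ref{lem:free-can} and Lemma~\ref{lem:canonical-new}(2). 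Since $\gamma$ preserves $\lor$ and $\wedge$, it preserves $\rightarrow$, which makes $\gamma$ a $\mathbb{B}$-embedding.

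The main obstacle I expect is the bookkeeping about the interval. One must verify that implication in $[a,b]$ agrees with that of $\mathscr{M}_D$ on these elements, and that the meets $\bigwedge g(X)$ computed in $\mathscr{M}_D$ stay in $[a,b]$; the latter holds because each $g(X)\ge a$ and $\le b$. One must also check that $\rightarrow$ in $\mathscr{M}_D$ between elements of the interval lands in the interval. The lower bound is the delicate part: in general $x\rightarrow y$ may drop below $a$. I would handle this by using the relative arrow of the interval, $a\lor(x\rightarrow y)$, in both algebras consistently, and verifying that $\alpha$ already takes these values, as in Lemma~\ref{lem:algebraic}.
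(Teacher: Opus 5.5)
Your proposal is correct and follows essentially the same route as the paper: compose $\alpha$ with $\MP{X}\mapsto[\MP{X}]_D$, extend by the freeness of $\mathscr{B}_n$, and use the canonical-set properties (Lemma~\ref{lem:canonical-new} and Corollary~\ref{cor:free}) to get injectivity and preservation of $\rightarrow$. Your closing remark is the right repair for the claim in your second paragraph, which is false as stated (for instance $[\alpha(X)]_D\rightarrow[\alpha(Y)]_D=\mathbf{0}_D$ whenever $X\subseteq Y$): the arrow of the interval is $a\lor(x\rightarrow y)$, and the decomposition still goes through because distributivity gives $a\lor\bigvee_i\bigwedge_j c_{ij}=\bigvee_i\bigwedge_j(a\lor c_{ij})$, a point the paper's own proof leaves implicit.
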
   

\begin{proof}
Let $n>0$ be given. Let $\iota: \mathscr{P}(I_n) \xhookrightarrow 
{\Usl_{\rightarrow}}\mathscr{B}_n$ be the $\Usl_{\rightarrow}$-embedding 
described in the proof of Lemma~\ref{lem:free-can}, i.e. $\iota(x)=[x)$ for 
$x \in \mathscr{P}(I_n)$. In analogy with the proof of 
Lemma~\ref{lem:free-can} we use the letters $x,y$ as variables for the 
elements of $\mathscr{P}(I_n)$, and $X,Y$ as variables for subsets of 
$\mathscr{P}(I_n)$. Then,  consider the embedding $\alpha: \mathscr{P}(I_n) 
\xhookrightarrow{\Usl_{\rightarrow}} \Big[\alpha(I_n), \alpha(\emptyset)] 
\Big]_{\up(\mathscr{P}_e)}$, as in Lemma~\ref{lem:algebraic}. Finally, let 
$d: \up(\mathscr{P}_e) \rightarrow \mathscr{M}_D$ be 
$d(\mathcal{A})=[\mathcal{A}]_D$. By the fact that the Dyment-Muchnik degrees 
form a canonical set in $\mathscr{M}_D$, the composition $d\circ \alpha$ is 
an $\Usl_{\rightarrow}$-embedding $d\circ \alpha: \mathscr{P}(I_n) 
\xhookrightarrow{\Usl_{\rightarrow}} \Big[[\alpha(I_n)]_D, 
[\alpha(\emptyset)]_D \Big]_{\mathscr{M}_D}$. Note that both $\iota$ and 
$d\circ \alpha$ preserve $0, 1$. By the freeness properties of 
$\mathscr{B}_n$ there is a unique morphism $\gamma: \mathscr{B}_n 
\overset{\mathbb{L}_d}{\longrightarrow} \Big[[\alpha(I)]_D, 
[\alpha(\emptyset)]_D \Big]_{\mathscr{M}_D}$, preserving $0,1$, and  such 
that $\gamma([x))=d(\alpha(x))=[\alpha(x)]_D$, for every  $x \in 
\mathscr{P}(I_n)$. 

It is only left to show that $\gamma$ preserves $\rightarrow$ and is an 
order-theoretic embedding. In the rest of proof, for notational simplicity we 
use the same symbols for the operations of $\mathscr{P}(I_n)$ and the 
corresponding operations of $\mathscr{M}_D$, leaving it to the reader to 
easily distinguish which denotes which. By Lemma~\ref{lem:can1}, every 
element of $\mathscr{B}_n$ has the form $\bigwedge_{x \in X} [x)$ for some $X 
\subseteq \mathscr{P}(I_n)$. Moreover, on generators $[x), [y)$ we have 
$\gamma([x)\imp [y))=\gamma([x)) \imp \gamma([y))$ since $\gamma([x) 
\rightarrow [y)))=d(\alpha(x \rightarrow y))$, but $x \rightarrow y$ is an 
element of $\mathscr{P}(I_n)$ and thus $\alpha(x \rightarrow y)=\alpha(x) 
\rightarrow \alpha(y)$, therefore $d(\alpha(x \rightarrow y))=d(\alpha(x) 
\rightarrow \alpha(y))=d(\alpha(x))\rightarrow d(\alpha(y))$ since 
$\alpha(x)$ and $\alpha(y)$ are Dyment-Muchnik mass problems. Therefore, 
using that $\gamma$ preserves $\wedge$ and $\lor$, we have, for subsets $X, Y 
\subseteq \mathscr{P}(I_n)$: 
\begin{align*}
\gamma\left(\bigwedge_{x \in X} [x) \rightarrow \bigwedge_{y \in Y} [y)\right) 
    &=\gamma\left(\bigvee_{x \in X}\bigwedge_{y \in Y} ([x) \rightarrow [y))\right) 
                           \qquad  \textrm{ (by Lemma~\ref{lem:canonical-new})},\\
    &=\bigvee_{x \in X} \bigwedge_{y \in Y} \gamma([x) \rightarrow [y))\\
    &=\bigvee_{x \in X} \bigwedge_{y \in Y} (\gamma([x)) \rightarrow \gamma([y)))\\
    &= \bigwedge_{x \in X}  \gamma([x)) \rightarrow 
           \bigwedge_{y \in Y} \gamma([y)) \qquad 
                     \textrm{(by Corollary~\ref{cor:free})}\\
    &= \gamma\left(\bigwedge_{x \in X} [x)\right) \rightarrow 
    \gamma\left(\bigwedge_{j\in J} [y)\right).
\end{align*} 
It follows that $\gamma$ preserves the arrow-operation, since every element 
in $\mathscr{B}_n$ is a meet of meet-irreducible elements. 

Finally, $\gamma$ is an order isomorphism. Indeed, if $X=\bigwedge_{x \in 
X}[x)$ and $Y=\bigwedge_{y \in Y}[y)$ are elements of $\mathscr{B}_n$ with 
$X,Y \subseteq \mathscr{P}(I_n)$, then by the meet-irreducibility of each $[x), 
[y)$ in $\mathscr{B}_n$, the meet-irreducibility in $\mathscr{M}_D$ of every 
$\gamma([x))$ (see property (2) of the Dyment-Muchnik degrees), and since 
$\gamma$ is an order-theoretic embedding on generators, we have 
\begin{align*}
\bigwedge_{x \in X}[x) \leq \bigwedge_{y \in Y} [y) &\Leftrightarrow
\QA{y\in Y}\QE{x \in X} \left[ [x) \leq [y) \right]\\
&\Leftrightarrow
\QA{y\in Y}\QE{x \in X} \left[\gamma([x)) \leq_D \gamma([y))\right]\\
&\Leftrightarrow \bigwedge_{x \in X} \gamma([x)) \leq_D \bigwedge_{y \in Y} 
\gamma([y))\\
&\Leftrightarrow \gamma(\bigwedge_{x \in X} [x))
             \leq_D \gamma(\bigwedge_{y \in Y}[y)). \qedhere
\end{align*} 
\end{proof} 

\subsection{There is a Dyment degree $\mathbf{Z}$ such that 
$\Th([\mathbf{0}_D, \mathbf{Z}]_{\Dym})=\IPC$} 

Our $\mathbf{Z}=[\MP{Z}]_D$ will be chosen so that it is possible to implement the
following plan.  Fix $n > 0$.  In view of Corollary~\ref{cor:all}, we will look for a suitable strong 
$u$--antichain in some downwards~$\leq_e$-closed set of 
partial functions so that, by Corollary~\ref{cor:all}, there is an embedding 
$\gamma: \mathscr{B}_n  \xhookrightarrow{\mathbb{B}} [[\alpha(I_n)]_D, 
[\alpha(\emptyset)]_D]_{\Dym}$. Let us denote $\alpha(I_n)=\mathcal{X}_n$ 
and $\alpha(\emptyset)=\mathcal{Y}_n$. The strong $u$--antichain and therefore the
embedding will be arranged so that for $X \in \mathscr{B}_n$ 
(say $X=\bigwedge_{k \in K} [x_k)$, where each $x_k \in \mathscr{P}(I_n)$), 
in view of Fact~\ref{fact:1}, we will have
\[
\mathcal{Y}_{n,X}{\equiv_D}  \mathcal{Z} \oplus  \mathcal{X}_{n},
\]
where $\mathcal{Y}_{n,X}= \bigwedge_{k \in K} \alpha(x_k)$ and so
$\gamma(X)=[\mathcal{Y}_{n,X}]_D$. Letting $\mathbf{X}_n=[\mathcal{X}_n]_D$ 
and $\mathbf{Y}_{n,X}=[\mathcal{Y}_{n,X}]_D$, we 
have $\mathbf{Y}_{n,X}= \mathbf{Z} \lor \mathbf{X}_n$, and from 
Fact~\ref{fact:1} it follows that $\Th([\mathbf{X}_{n}, 
\mathbf{Y}_{n,X}]_{\Dym}) \subseteq \Th([0,X]_{\mathscr{B}_n})$, and thus by 
Lemma~\ref{lem:free}
\begin{align*}
  \Th([\mathbf{0}_D, \mathbf{Z}]_{\Dym}) & \subseteq \bigcap_{n>0} \bigcap_{X \in \mathscr{B}_n}
\Th\big([\mathbf{X}_{n}, \mathbf{Y}_{n,X}]_{\Dym}\big) \\
   & \subseteq
\bigcap_{n>0} \bigcap_{X \in \mathscr{B}_n}
      \Th\big([0,X]_{\mathscr{B}_n}\big)\\
      &= \IPC.
\end{align*}

In order to find a $D$-mass problem $\mathcal{Z}$ as required above, we 
still need a couple of computability-theoretic results due to Kuyper, which 
we review in the next two lemmas. In the rest of this section, if $g$ is a 
total function, then by $g'$ we mean a total function which is $\equiv_T$ to 
the Turing jump of $g$. 

\begin{lemma}\label{lem:main21}\cite[Theorem~4.10]{Kuyper-Medvedev}
Suppose that $p,q$ are total functions such that $p' \le_e q$, and let $k_0, 
k_1, \ldots$ be total functions which are uniformly $e$-reducible to $q$ and 
$k_i \nleq_e p$. Then there exists a total function $f$ such that $p\le_e f$, 
$f' \le_e q$ and for every $i$, $q \le_e f \oplus k_i$. 
\end{lemma}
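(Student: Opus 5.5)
This is Kuyper's Theorem~4.10 transposed to total functions under $\leq_e$. For total functions, $\leq_e$ coincides with $\leq_T$ (graphs of total functions), so I would first translate every hypothesis into Turing language: $p' \le_T q$, the $k_i$ are uniformly Turing reducible to $q$, and $k_i \nleq_T p$. The goal is then a total $f$ with $p \le_T f$, $f' \le_T q$, and $q \le_T f \oplus k_i$ for all $i$. I will build $f$ by a finite-extension construction computable in $q$, in the style of Posner--Robinson and of jump-inversion coding.

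\emph{Construction.} Let $f = p \oplus g$, so that $p \le_T f$ is automatic, and construct $g$ as $\bigcup_s \sigma_s$ by finite extensions. At stage $s$ I handle three kinds of requirement in turn.

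(a) \emph{Jump control.} For the $e$-th requirement, ask whether there exists $\tau \supseteq \sigma_s$ with $\Phi_e^{p\oplus \tau}(e)\downarrow$. This is $\Sigma_1(p)$, so $q$ can answer it because $p' \le_T q$. If the answer is yes, extend $\sigma_s$ to such a $\tau$.

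(b) \emph{Coding $q$ against each $k_i$.} I will use the Posner--Robinson style of coding. To code the next bit $q(n)$ relative to $k_i$, the plan is to append a block whose content is $k_i\restriction m$ masked with $q(n)$, for instance $\langle k_i(0)+q(n),\dots\rangle$, or to place markers in positions that $k_i$ determines. The idea is that someone holding $f\oplus k_i$ can strip off $k_i$ and read $q(n)$. Someone holding only $f$ must not be able to decode: because $k_i \nleq_T p$, the coded blocks should not reveal $k_i$ beyond what is needed.

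(c) \emph{Recovering the stage boundaries.} The decoder also has to find the lengths $|\sigma_s|$. My plan is to interleave so that block boundaries are self-delimiting: use a marker value, for example $0$, that never appears inside the blocks, with the blocks written using values $\geq 1$.

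\emph{Verification.} Since $q$ computes $p'$ and the $k_i$ uniformly, the whole sequence $(\sigma_s)$ is $q$-computable. Then $f' \le_T q$ because the $\Sigma_1$ questions in (a) decide $f'$: $e \in f'$ iff the forcing step at stage $e$ answered yes.

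\emph{Main obstacle.} The difficulty is the decoding from $f \oplus k_i$ when infinitely many $k_j$ share the coding. Decoding has to reconstruct the stages in which the forcing extensions of (a) were inserted, and it has no access to $p'$. Kuyper handles this by making the forcing extension itself recoverable: take the \emph{least} $\tau$, found by a $p$-computable search once we know one exists. The existence bit is then coded into $g$ alongside $q$'s bits, so that $f$ together with $k_i$ can rerun the search.

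The delicate point is that $k_i \nleq p$ must be used so that only $k_i$ (rather than $f$ alone) unlocks the coding. Otherwise $f$ would compute $q$, and hence $f'$, which is impossible. I therefore expect to follow Kuyper's Theorem~4.10 closely here, noting only that $\leq_e$ on total functions is $\leq_T$.
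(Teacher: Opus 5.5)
Your first paragraph is exactly the paper's whole proof, and it is correct: pass through graphs, where $\le_e$ and $\le_T$ agree uniformly on total functions, and invoke Kuyper's Theorem~4.10. But the finite-extension construction you then offer in (a)--(c) as the actual proof does not work.

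\textbf{Where (a)--(c) fail.}
\begin{enumerate}
\item The markers of (c) clash with (a). For a negative answer to force $\Phi_e^{p\oplus g}(e)\uparrow$, the forcing question must range over all extensions $\tau$, including ones that contain the marker value. So adopting a convergent $\tau$ destroys the self-delimiting structure.
\item If the markers did survive, $f$ alone could locate the blocks of (b). Since you mask every bit with the same initial segment of $k_i$, differences of blocks reveal $q$. That gives $q\le_T f$, hence $f'\le_T q\le_T f$, which is impossible.
\item Your fix for the ``main obstacle'' does not work. The existence bit of every forcing step must be recoverable from $f\oplus k_i$ for \emph{every} $i$, because each decoder has to parse every stage. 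It must not be recoverable from $f$ alone, or else $f\ge_T f'$. A block masked by a single $k_i$ cannot serve infinitely many decoders.
\item The hypothesis $k_i\nleq_T p$ is never used in any concrete step.
\end{enumerate}

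\textbf{The missing idea} is Posner--Robinson coding, in which forcing the jump and encoding its outcome are a single move.
\begin{enumerate}
\item Replace $k_j$ by a $\{0,1\}$-valued $K_j\equiv_T k_j$, and let $X_j$ be the set of codes of initial segments of $K_j$. Since $K_j\nleq_T p$, $\overline{X_j}$ is not $\Sigma^0_1(p)$.
\item At stage $s$, let $R$ be the $\Sigma^0_1(p)$ set of tuples $\vec m=(m_0,\dots,m_s)$ such that some $\tau\supseteq\sigma_s{}^\smallfrown 0^{m_0}1\cdots 0^{m_s}1$ has $\Phi_s^{p\oplus\tau}(s)\downarrow$.
\item Let $q$ search for a tuple $\vec m$ satisfying one of two conditions: either $\vec m\in R$ and $m_j\in X_j$ for all $j\le s$, or $\vec m\notin R$ and $m_j\notin X_j$ for all $j\le s$.
\item The search halts. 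Otherwise $R\supseteq\prod_j\overline{X_j}$ and $R\cap\prod_j X_j=\emptyset$. Passing to $\{\vec a: \text{the section } R_{\vec a} \text{ contains every string of some length}\}$ removes one coordinate at a time. This continues until some $\overline{X_j}$ becomes $\Sigma^0_1(p)$, a contradiction.
\item In the first case extend to the least convergent $\tau$; in the second, extend to $\sigma_s{}^\smallfrown 0^{m_0}1\cdots 0^{m_s}1$. Then append a code of $q(s)$.
\end{enumerate}
Now $q$ computes $(p\oplus g)'$. Meanwhile $p\oplus g\oplus k_i$, given $\sigma_i$ nonuniformly, reads $m_i$ and tests it with $K_i$. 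It reruns the $p$-search for $\tau$ when needed, and so recovers $q(s)$ for all $s\ge i$.
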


\begin{proof}
See \cite{Kuyper-Medvedev} where the statement is proved for sets and Turing 
reducibility. Our claim then follows from the fact that on total functions 
$\le_T$ can be uniformly copied to $\leq_e$.
\end{proof}

If $\phi \in \MP{P}$ then by $\phi^{[i]}$ we mean the partial function 
$\phi^{[i]}(x)=\phi(\langle i,x\rangle)$; notice that if $f$ is a total 
function then $f^{[i]}$ is total for every $i$. Furthermore, a partial 
function $\phi$ is called \emph{computably $e$-independent} if for every $i$, 
$\phi^{[i]} \nleq_e \bigoplus_{j\ne i} \phi^{[j]}$, where we understand that 
if $I\subseteq \omega$ is a set of indices then $\bigoplus_{i \in I} 
\phi^{[i]}$ is the partial function such that $\bigoplus_{i \in I} 
\phi^{[i]}(\langle j,x\rangle)=\phi_j(x)$ if $j \in I$, and $\bigoplus_{i \in 
I} \phi^{[i]}(\langle j,x\rangle)=0$ if $j \notin I$. 

\begin{lemma}\cite[Lemma~4.11]{Kuyper-Medvedev}\label{lem:main22}
Let $g$ be a computably $e$-independent total function. Then there exists a 
total function $h$ such that $g \oplus h$ is computably $e$-independent, 
where we understand that the $2i$-th column of $g \oplus h$ is $g^{[i]}$ and 
the $2i+1$-th column is $h^{[i]}$. 
\end{lemma}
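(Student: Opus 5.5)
The plan is to transfer Kuyper's proof of \cite[Lemma~4.11]{Kuyper-Medvedev} from Turing reducibility on total functions to enumeration reducibility. The transfer should be essentially free, because on total functions $\le_e$ and $\le_T$ coincide up to a uniform translation: for total $a,b$ we have $a\le_e b$ iff $a\le_T b$, and the translation between indices is effective. The same device was used for Lemma~\ref{lem:main21}. First I would check that the columns of $g$, and every join $\bigoplus_{j\in J} g^{[j]}$ (with the convention that columns outside $J$ are $0$), are total. Then ``computably $e$-independent'' for a total function is exactly computable independence in the Turing sense: $g^{[i]}\nleq_T\bigoplus_{j\ne i}g^{[j]}$. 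So it suffices to prove the Turing version: given a computably independent total $g$, find a total $h$ such that $g\oplus h$ (columns interleaved as stated) is computably independent.

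For the Turing version I would build $h$ by a forcing argument, taking $h$ sufficiently Cohen generic relative to $g$. A finite-extension construction suffices, meeting countably many requirements, with $h$ computable from $g''$ or simply existing by a counting argument. Two kinds of requirement must be met. The first kind says that each new column $h^{[i]}$ is not computable from $g\oplus\bigoplus_{j\ne i}h^{[j]}$; this is standard mutual genericity of the columns of a Cohen generic relative to $g$. The second kind says that each old column $g^{[i]}$ is not computable from $\bigoplus_{j\ne i}g^{[j]}\oplus h$; this requires that adding a generic $h$ does not let $g^{[i]}$ be computed from the other columns of $g$.

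The second kind is the main obstacle. I would handle it with the standard fact that if $G$ is sufficiently Cohen generic relative to $A\oplus B$ and $A\nleq_T B$, then $A\nleq_T B\oplus G$. Apply it with $A=g^{[i]}$ and $B=\bigoplus_{j\ne i}g^{[j]}$, taking $h$ generic relative to $g$ itself. The argument goes as follows. Fix a condition $\sigma$ and an index $e$. If some extension of $\sigma$ forces $\Phi_e^{B\oplus h}$ to disagree with $A$ or to be partial, we are done. Otherwise, the values of $A$ are obtained $B$-computably by searching for extensions $\tau\supseteq\sigma$ with $\Phi_e^{B\oplus\tau}(x)\downarrow$. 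Any two such extensions must give the same answer, or else one of them could be used to force a disagreement. This makes $A\le_T B$, a contradiction.

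Finally, $h$ must be total. It is, since Cohen genericity yields a total function when conditions are finite strings of naturals. Translating back yields the computable $e$-independence of $g\oplus h$.
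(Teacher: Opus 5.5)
Your proposal is correct and takes essentially the paper's route. The paper proves nothing beyond citing Kuyper's version for sets and Turing reducibility, implicitly relying on the coincidence of $\le_e$ and $\le_T$ on total functions that it invokes for Lemma~\ref{lem:main21}. Your Cohen-genericity argument supplies the Turing-side proof that this citation leaves out, and it is sound: mutual genericity of the columns of $h$ relative to $g$ gives the new requirements, and the force-disagreement-or-divergence density argument gives $g^{[i]}\nleq_T \bigoplus_{j\ne i}g^{[j]}\oplus h$.
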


\begin{proof}
See \cite{Kuyper-Medvedev} where a full proof is given for sets and Turing 
reducibility. 
\end{proof}

\begin{theorem}\label{thm:main2}
If $g$ is a computably $e$-independent total function, then $\Th(\left[ 
\mathbf{0}_{D}, \mathbf{Z}\right]_{\Dym}) =\IPC$, where 
$\mathbf{Z}=[\MP{Z}]_D$ and $\MP{Z}=\{\cat{i}{\phi}: i \in \omega, 
g^{[i]}\le_{e} \phi\}$.
\end{theorem}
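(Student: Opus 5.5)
Since $\IPC \subseteq \Th(\mathscr{L})$ holds for every Brouwer algebra, the task is the inclusion $\Th([\mathbf{0}_D,\mathbf{Z}]_{\Dym}) \subseteq \IPC$. I would follow the plan laid out before the statement. For each $n>0$ I will produce a downwards $\leq_e$-closed set $\mathcal{A}_n \subseteq \mathcal{P}$ and a strong $u$--antichain $\{\psi_1,\dots,\psi_n\}$ in $\mathcal{A}_n$. Corollary~\ref{cor:all} then yields an embedding $\gamma:\mathscr{B}_n \xhookrightarrow{\mathbb{B}} [[\mathcal{X}_n]_D,[\mathcal{Y}_n]_D]_{\Dym}$, where $\mathcal{X}_n=\alpha(I_n)$ and $\mathcal{Y}_n=\alpha(\emptyset)$. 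For every $X=\bigwedge_{k\in K}[x_k)\in\mathscr{B}_n$ I need $\mathcal{Y}_{n,X}\equiv_D \mathcal{Z}\oplus\mathcal{X}_n$, where $\mathcal{Y}_{n,X}=\bigwedge_k\alpha(x_k)$. Given this, Fact~\ref{fact:1} applied with $x=\mathbf{X}_n$, $y=\mathbf{Y}_{n,X}$, $z=\mathbf{Z}$ gives $\Th([\mathbf{0}_D,\mathbf{Z}])\subseteq\Th([\mathbf{X}_n,\mathbf{Y}_{n,X}])$. Since $\gamma$ is one-one with image $[\mathbf{X}_n,\mathbf{Y}_{n,X}]$ when restricted to $[0,X]_{\mathscr{B}_n}$, this is contained in $\Th([0,X]_{\mathscr{B}_n})$, and Lemma~\ref{lem:free} finishes the argument.

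\textbf{Construction.} Using Lemma~\ref{lem:main22} repeatedly, I take total $h$ with $g\oplus h$ computably $e$-independent. I use fresh columns $k_i=h^{[i]}$, and I reindex so that the columns of $g$ together with those of $h$ form a single independent family. Set $p=\bigoplus_{j\ge n} h^{[j]}$ and choose $q\ge_e p'\oplus g\oplus h$ total. The functions $k_i=g^{[i]}$ are uniformly $\le_e q$, and $k_i\nleq_e p$ by independence. Lemma~\ref{lem:main21} gives $f$ with $p\le_e f$, $f'\le_e q$, and $q\le_e f\oplus g^{[i]}$ for all $i$. I would then set $\psi_i=f\oplus h^{[i]}$ for $i\in I_n$ (reindexing $h$-columns away from $p$), and
\[
\mathcal{A}_n=\{\phi: \phi\le_e f\oplus h^{[i]}\text{ for some } i\in I_n\ \text{ and } \QA{j}[g^{[j]}\nleq_e \phi]\}_{d\leq_e}.
\]
More simply, I would take $\mathcal{A}_n=\{\psi_i\}_{d\le_e}$ and check the required properties directly.

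\textbf{Verifications.} There are three.

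(a) Strong antichain. I need $\psi_i\oplus\psi_j\notin\mathcal{A}_n$ for $i\neq j$. This holds if $h^{[j]}\nleq_e f\oplus h^{[i]}$. I would arrange it by including $h^{[j]}$ ($j\ne i$) among the $k$'s relative to $p$, so that $f$ is generic enough. This is where the choice of what goes into $p$ and into the $k$'s must be tuned.

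(b) Everything above the $\psi_i$ lies in $\mathcal{Z}\oplus\mathcal{X}_n$. For each $x\subseteq I_n$, I want $\alpha(x)\le_D \mathcal{Z}\oplus\mathcal{X}_n$. Given $\cat{i}{\phi}$ with $g^{[i]}\le_e\phi$ and $\chi\in\mathcal{X}_n$, we get $q\le_e f\oplus g^{[i]}$. Hence $\phi\oplus\chi$ computes $q$, and from it every $\psi_j$, whenever $\chi$ computes $f$. Otherwise $\chi\in\mathcal{A}_n^c\subseteq\alpha(x)$ already. This needs uniformity: I would reduce to $\mathcal{Y}_n=\alpha(\emptyset)$ being $\le_D\mathcal{Z}\oplus\mathcal{X}_n$. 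Since $\alpha(\emptyset)\supseteq\alpha(x)$ in the $\supseteq$-ordering, this suffices only in one direction. To obtain the equivalence for all $\mathcal{Y}_{n,X}$ simultaneously, I would instead show $\mathcal{Y}_{n,X}\ge_D\mathcal{X}_n$ (trivial, since $\alpha(x)\subseteq\mathcal{X}_n$) and $\mathcal{Y}_{n,X}\geq_D\mathcal{Z}$. The plan in the text implicitly takes $X$ so that this works; the main obstacle is making this uniform. The Dyment reduction must decide, from an element of $\mathcal{A}_n^c$, a column index $i$ with $g^{[i]}$ enumerable. I expect to handle this by defining $\mathcal{A}_n$ as the set of $\phi$ not computing any $g^{[i]}$ together with $q$-information, so that membership in $\mathcal{A}_n^c$ is witnessed effectively. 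This may require adjusting the definition of $\mathcal{Z}$-compatible $\mathcal{A}_n$.

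(c) The final equality is then assembled as described above.
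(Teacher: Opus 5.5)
Your proposal has a genuine gap in the order of quantifiers. You fix, for each $n$, one set $\mathcal{A}_n$ and one antichain $\psi_1,\dots,\psi_n$, and then ask for $\mathcal{Y}_{n,X}\equiv_D\mathcal{Z}\oplus\mathcal{X}_n$ for all $X\in\mathscr{B}_n$ at once. That is impossible. Once $\mathcal{A}_n$ and the $\psi_i$ are fixed, $\gamma$ is a fixed injective map with $\gamma(X)=[\mathcal{Y}_{n,X}]_D$, so your requirement would make $\gamma$ constant: for instance $\gamma([I_n))=\mathbf{X}_n$ would have to equal $\gamma([\emptyset))=[\alpha(\emptyset)]_D$. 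Your fallback fails for the same reason: demanding $\mathcal{Y}_{n,X}\geq_D\mathcal{Z}$ at $X=[I_n)$, where $\mathcal{Y}_{n,X}=\mathcal{X}_n$, gives $\mathbf{Z}\vee\mathbf{X}_n=\mathbf{X}_n$ and collapses the interval. The paper instead first fixes $n$ \emph{and} $X=\bigwedge_{1\le j\le k}[x_j)$, and only then builds $\mathcal{A}$ and $f_1,\dots,f_n$ tailored to this $X$.

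Your concrete choices also fail on their own terms. With $\mathcal{A}_n=\{\psi_i\}_{d\le_e}$, the set $\mathcal{A}_n^c\subseteq\alpha(x_j)$ contains partial functions enumerating no column of $g$, so $\bigwedge_j\alpha(x_j)\not\geq_D\mathcal{Z}$. And a single $f$ with $q\le_e f\oplus g^{[i]}$ for every $i$ treats all columns of $g$ alike, so nothing in the antichain distinguishes the different $x_j$.

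The missing idea is to encode the incidence relation $i\in x_j$ into the antichain and to make $\mathcal{A}^c$ a cone. Take $h$ with $g\oplus h$ computably $e$-independent, and put $\mathcal{A}=(\{(g\oplus h)'\}_{u\le_e})^c$. For each $i\in I_n$, apply Lemma~\ref{lem:main21} with $p=(\bigoplus_{1\le j\le k,\, i\in x_j}g^{[j]})\oplus h^{[i]}$ and $q=(g\oplus h)'$. As the $k$'s, take the $g^{[j]}$ with $i\notin x_j$ or $j\notin\{1,\dots,k\}$, together with the $h^{[j]}$ with $j\neq i$. The resulting $f_i$ has these properties:
\begin{itemize}
\item $f_i$ enumerates $g^{[j]}$ when $i\in x_j$;
\item $(g\oplus h)'\le_e f_i\oplus g^{[j]}$ otherwise;
\item $(g\oplus h)'\le_e f_i\oplus h^{[j]}\le_e f_i\oplus f_j$ for $j\neq i$;
\item $f_i'\le_e(g\oplus h)'$, which keeps $f_i\in\mathcal{A}$.
\end{itemize}

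Since $\mathcal{A}^c$ is the cone above $(g\oplus h)'$, every member of it enumerates all columns of $g$, and it is contained in every $\alpha(x)$. This removes the uniformity problem you flagged. The map $\psi\oplus\cat{j}{\phi}\mapsto\cat{j}{(\psi\oplus\phi)}$, using $\cat{1}{(\psi\oplus\phi)}$ when $j\notin\{1,\dots,k\}$, witnesses $\bigwedge_j\alpha(x_j)\le_D\alpha(I_n)\oplus\mathcal{Z}$ without deciding which case holds, because in every bad case $\psi\oplus\phi$ lands in $\mathcal{A}^c$. Conversely, $\cat{j}{\phi}\mapsto\phi\oplus\cat{j}{\phi}$ gives the other reduction, since every element of $\alpha(x_j)$ enumerates $g^{[j]}$ and lies in $\alpha(I_n)$.

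A minor point: you need not claim that $\gamma$ maps $[0,X]_{\mathscr{B}_n}$ onto $[\mathbf{X}_n,\mathbf{Y}_{n,X}]$. Injectivity together with Fact~\ref{fact:1} suffices.
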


\begin{proof}
The proof is as in \cite[Theorem~1.1]{Kuyper-Medvedev}, but replacing $\le_T$ 
with $\le_e$. For the sake of completeness and the ease of the reader we give 
the details as follows. Suppose that $g$ is total and computably 
$e$-independent. Given $n >0 $ and $x_1, \ldots, x_k \in \mathscr{P}(I_n)$, 
let us show first that there exist a downwards~$\le_e$-closed $D$-mass 
problem $\mathcal{A}$ and a strong $u$--antichain of total functions $f_1, 
\ldots, f_n \in \mathcal{A}$ such that (in the notation of the proof of 
Corollary~\ref{cor:all} but taking $I=I_n$), 
\begin{equation}\label{eq:main}
\left(\aaa^c \cup \{f_1, \ldots, f_n\}_{u\le_e} \right)
                 \oplus  \left\{\cat{i}{\phi}:
i \in \omega, g^{[i]}\le_e \phi\right\}\equiv_D \bigwedge_{1 \le j \le k}
\left(\aaa^c \cup \{f_i: i\in x_j\}_{u\le_e} \right),
\end{equation}
i.e. 
\[
\alpha(I_n) \oplus \mathcal{Z}\equiv_D \bigwedge_{1 \le j \le k}\alpha(x_j). 
\]

By fixing $n>0$ and $x_1, \ldots, x_k \in \mathscr{P}(I_n)$ we have just 
fixed a pair $n, X$, with $n>0$ and $X \in \mathscr{B}_n$, where 
$X=\bigwedge_{1 \le j \le k} [x_j)$, so that 
\begin{align*}
\mathcal{Z}&=\left\{\cat{i}{\phi}: i \in \omega, g^{[i]}\le_e \phi\right\}\\
\mathcal{X}_{n} &=\aaa^c \cup \{f_1, \ldots, f_n\}_{u\le_e}\\
\mathcal{Y}_{n,X} &= \bigwedge_{1 \le j \le k}\alpha(x_j),
\end{align*}
obtaining $\mathcal{X}_{n} \oplus \mathcal{Z}\equiv_D \mathcal{Y}_{n,X}$. 

Since $[0, X]_{\mathscr{B}_n} \xhookrightarrow{\mathbb{B}} [\mathbf{X}_{n}, 
\mathbf{Y}_{n,X}]_{\mathscr{M}_D}$, we have $\Th\big([\mathbf{X}_{n}, 
\mathbf{Y}_{n,X}]_{\mathscr{M}_D}\big) \subseteq \Th([0, 
X])_{\mathscr{B}_n})$. 

To show the existence of $\MP{A}$, by Lemma~\ref{lem:main22} let $h$ be a
total function such that $g \oplus h$ is computably $e$-independent. Define
\[
\aaa=\big(\{(g \oplus h)'\}_{u\le_e}\big)^c
\]
(where complementation is taken with respect to the set of all partial 
functions) so that $\aaa$ is clearly downwards $\le_e$-closed. Given $i$ with 
$1 \le i \le n$, apply Lemma~\ref{lem:main21} with $p = \Big(\bigoplus_{1\le 
j \le k, i \in x_j} g^{[j]}\Big) \oplus h^{[i]}$, $q = (g\oplus h)'$, and the 
sequence of $k$'s listing the $g^{[j]}$ with either $i \notin x_j$ or $j > k$ 
together with the $h^{[j]}$ with $j \neq i$.  The result is a total function 
$f_i$ such that the following hold: 
\begin{align*}
&\Big( \bigoplus_{1\le j \le k, i \in x_j} g^{[j]}\Big) \oplus h^{[i]} \le_e 
    f_i,\\
& f_i' \le_e (g\oplus h)',\\
&(\forall j)\Big[ \big[ [1 \le j \le k \,\&\, i \notin 
    x_j] \lor j >k \big] \Rightarrow (g \oplus h)' \le_e f_i 
\oplus g^{[j]}\Big],\\ 
&(\forall j \ne i)\big[(g \oplus h)' \le_e f_i \oplus h^{[j]}\big]. 
\end{align*}

We now show that $\{f_1, \ldots, f_n\}$ is a strong $u$--antichain in $\aaa$. 
It is immediate to see that $f_i \in \aaa$ for every $1 \le i \le n$. If 
$1\le i <j \le n$ then 
\[
(g\oplus h)' \le_e h^{[i]} \oplus f_j \le_e f_i \oplus f_j,
\]
so that $f_i \oplus f_j \notin \aaa$.

Finally, we show equation~(\ref{eq:main}). To see $\ge_D$, assume that 
\[
\psi \oplus \chi \in \left(\aaa^c \cup \{f_1, \ldots, f_n\}_{u\le_e} 
\right) \oplus  \left\{ \cat{i}{\phi}: g^{[i]}\le_e \phi\right\}:
\]
in particular let $j$ be such that $\chi=\cat{j}{\phi}$ and $g^{[j]} \le_e 
\phi$. We split the verification into cases: 
\begin{enumerate}
  \item Assume first that $j=0$ or $j>k$. We show in this case that $\psi 
      \oplus \phi \in \aaa^c \subseteq \alpha(x_1)$. Indeed, if $\psi \ge_e 
      f_i$ for some $1\le i \le n$ then 
\[
(g\oplus h)' \le_e f_i \oplus g^{[j]} \le_e \psi \oplus \phi.
\]
On the other hand, if $\psi \in \aaa^c$, i.e. $(g\oplus h)'\le_e \psi$, 
        then $(g\oplus h)' \le_e \psi \oplus \phi$.
        
In either case we have that $\psi \oplus \phi \in \aaa^c$ and therefore the 
mapping $\psi \oplus (\cat{j}{\phi}) \mapsto \cat{1}{(\psi \oplus \phi)}$ 
maps $\psi \oplus (\cat{j}{\phi})$ to an element in 
$\cat{1}{(\alpha(x_1))}$. 

\item Assume now that $1 \leq j \le k$. In this case we show that $\psi 
    \oplus \phi \in \alpha(x_j)$. Indeed, if $\psi \ge_e f_i$ for some $i 
    \in x_j$ then $f_i \le_e \psi \oplus \phi$, but then we can use the 
    fact that $\{f_i\}_{u\le_e} \subseteq \alpha(x_j)$. On the other hand, 
    if $\psi \ge_e f_i$ for some $i \notin x_j$ then 
    \[ 
    (g\oplus h)' \le_e  f_i  \oplus g^{[j]} \le_e 
    \psi \oplus \phi\in \alpha(x_j).
    \] 
In either case, we have that the mapping  $\psi \oplus (\cat{j}{\phi}) 
\mapsto \cat{j}{(\psi \oplus \phi)}$ maps $\psi \oplus (\cat{j}{\phi})$ to 
an element in $\cat{j}{(\alpha(x_j))}$. 
    
Finally, if $(g\oplus h)'\le_e \psi$ then trivially  $(g\oplus h)'\le_e 
    \psi\oplus \phi$, and thus again $\psi\oplus \phi \in \alpha(x_j)$. 
    Even in this case, we have that the mapping  $\psi \oplus 
    (\cat{j}{\phi}) \mapsto \cat{j}{(\psi \oplus \phi)}$ maps $\psi \oplus 
(\cat{j}{\phi})$ to an element of $\cat{j}{(\alpha(x_j))}$. 
\end{enumerate}
So, uniformly in $j$ such that $\chi(0)=j$ (if $\chi(0)\downarrow$ and this 
is certainly the case if $\chi \in \{ \cat{i}{\phi}: g^{[i]}\le_e \phi\}$) we 
can map, via an enumeration operator, the partial function $\psi \oplus \chi$ 
to some partial function in $\cat{r}{(\alpha(x_r))}$, for some $r\in \{1, 
\ldots, k\}$. 

We still have the prove $\le_D$, i.e
\[ 
\left(\aaa^c \cup \{f_1, \ldots, f_n\}_{u\le_e} \right) \oplus 
\left\{\cat{i}{\phi}: i \in \omega, g^{[i]}\le_e \phi\right\} \le_D
 \bigwedge_{1 \le j \le k}\alpha(x_j).
\]
But this is obvious as $\{f_r: r \in x_j\}_{u\le_e} \subseteq \{f_1, \ldots, 
f_n\}_{u\le_e}$ and $\{f_r: r \in x_j\}_{u\le_e} \subseteq \left\{\phi: 
g^{[j]}\le_e \phi \right\}$, being $g^{[j]}\le_e f_r$ for every $r \in x_j$. 
So the mapping $\cat{j}{\phi} \mapsto \phi \oplus (\cat{j}{\phi})$ induces an 
enumeration operator yielding the desired reduction. 
\end{proof}

\section{The Dyment-Muchnik lattice and intermediate 
logics: An initial segment of the Dyment-Muchnik lattice modeling exactly 
$\IPC$}\label{sec:Dyment-Muchnik} 

We begin by giving a convenient characterization of $\mathscr{M}_{D,w}$. A 
\emph{mass problem of sets} is a subset $\MP{A}\subseteq 2^\omega$. On mass 
problems $\mathcal{A}$ and $\mathcal{B}$ of sets consider the relations 
$\leq_{D}$ and $\leq_{D,w}$ (for which we use the same symbols as for 
Dyment reducibility and Dyment-Muchnik reducibility on mass problems of 
partial functions) given respectively by 
\[
\mathcal{A} \leq_{D} \mathcal{B} \Leftrightarrow \QE{ \textrm{$e$-operator $\Phi$}}
[\Phi(\mathcal{B}) \subseteq \mathcal{A}],
\]
and
\[
\mathcal{A} \leq_{D,w} \mathcal{B} \Leftrightarrow
\QA{B\in \mathcal{B}}\QE{A\in 
\mathcal{A}}[A \leq_e B].
\]
These relations induce preordering relations, and thus, degree structures 
$\mathscr{M}_{D}^{set}$ and $\mathscr{M}_{D,w}^{set}$, on the collection of 
mass problems of sets. Now, the functions induced on $e$-degrees by the two 
mappings $\graph: \mathcal{P} \longrightarrow 2^\omega$ and $s: 2^\omega 
\longrightarrow  \mathcal{P}$, given by, respectively, $\phi \mapsto 
\graph(\phi)$ and $A \mapsto A\times\{0\}$, are easily seen to be 
order-theoretic homomorphisms which invert each other up to degree and 
therefore they yield an isomorphism $F: \mathscr{D}^{pf}_e \simeq 
\mathscr{D}_e^{set}$, showing that $\mathscr{M}_{D} \simeq 
\mathscr{M}_{D}^{set}$ and $\mathscr{M}_{D,w} \simeq \mathscr{M}_{D,w}^{set}$ 
via the isomorphisms induced in both cases by the assignment $\mathcal{A} 
\mapsto \{ \graph(\phi): \phi \in \mathcal{A}\}$, for every mass problem 
$\mathcal{A} \subseteq \mathcal{P}$. In view of this circumstance, hereafter 
in our investigation of the Dyment-Muchnik lattice as a Brouwer algebra we 
shall work with mass problems of sets of numbers\footnote{A similar approach, 
which considers mass problems of sets rather than mass problems of total 
functions, has been widely used in the literature on the Medvedev lattice and 
the Muchnik lattice, the two ``Turing reducibility'' counterparts of 
$\mathscr{M}_D$ and $\mathscr{M}_{D,w}$.}, viewing $\mathscr{M}_{D,w}$ as 
$\mathscr{M}_{D,w}^{set}$. In this characterization of $\mathscr{M}_{D,w}$, 
the least element is the degree of any mass problem containing some c.e.\ 
set, the greatest element is the degree of the empty set, and the operations 
$\lor$, $\wedge$ and $\rightarrow$ are as follows: if $\mathcal{A}, 
\mathcal{B}$ are mass problems of sets, then $[\mathcal{A}]_{D,w}\lor 
[\mathcal{B}]_{D,w}= [\{A\oplus B: A \in \mathcal{A}, \, B \in 
\mathcal{B}]_{D,w}$, $[\mathcal{A}]_{D,w}\wedge [\mathcal{B}]_{D,w}= [ 
\mathcal{A} \cup \mathcal{B}]_{D,w}$, and $[\mathcal{A}]_{D,w}\rightarrow 
[\mathcal{B}]_{D,w}=[\{C: \QA{A \in \mathcal{A}}\QE{B\in \mathcal{B}}[B 
\leq_e C \oplus A]\}]_{D,w}$. On the other hand, we know by 
Theorem~\ref{thm:iso-usl-muchnik} that $\mathscr{M}_{D,w} \simeq 
\up(\mathscr{D}_e^{pf}) \simeq \up(\mathcal{P}, \leq_e\rangle)$. The same 
argument is easily seen to show that $\mathscr{M}_{D,w}^{set} \simeq 
\up(\mathscr{D}_e^{set}) \simeq \up(\langle 2^\omega, \leq_e \rangle)$. 
Indeed, the mapping which sends an upwards $\leq_e$-closed subset $A 
\subseteq \mathscr{D}^{pf}_e$ to $\{F(\mathbf{a}): \mathbf{a}\in A\}$ is an 
isomorphism of $\up(\mathscr{D}_e^{pf})$ onto $\up(\mathscr{D}_e^{set})$.  

\subsection{More on Brouwer algebras and intermediate logics}

As is well known, every partial order $\mathscr{P}$ set can be viewed as a 
\emph{Kripke frame}, and as such one can talk about $\Th_K(\mathscr{P})$, 
i.e. the set of propositional formulas that are valid in $\mathscr{P}$ 
according to Kripke semantics. We do not go into details here: see 
\cite{Chagrov-Zakharyashev, Gabbay-book} for suitable references. Suffice it 
to say that one can prove 

\begin{fact}\label{fact:DeJongh}
$\Th_K(\mathscr{P})=\Th(\up(\mathscr{P}))$. 
\end{fact}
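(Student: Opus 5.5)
The plan is to reduce the statement to the standard correspondence between Kripke frames and Heyting algebras of upsets, dualized to Brouwer algebras. Recall that for a poset $\mathscr{P}$ viewed as a Kripke frame, a valuation assigns to each atom an upwards closed set of nodes. Truth is persistent, so the set $\|\alpha\|$ of nodes forcing a formula $\alpha$ is again upwards closed. The Kripke clauses give
\[
\|\alpha\wedge\beta\|=\|\alpha\|\cap\|\beta\|,\qquad
\|\alpha\lor\beta\|=\|\alpha\|\cup\|\beta\|,\qquad
\|\bot\|=\emptyset .
\]
For implication, $\|\alpha\rightarrow\beta\|$ is the set of nodes $p$ such that every $q\geq p$ in $\|\alpha\|$ lies in $\|\beta\|$. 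Dually, in $\up(\mathscr{P})$ ordered by $\supseteq$, the operations are as recorded in Subsection~\ref{sct:alg-ingred}: join is $\cap$, meet is $\cup$, $0=P$, $1=\emptyset$, and $X\rightarrow Y=\bigcup\{Z\in\up(P): Z\cap X\subseteq Y\}$. Recall also that the connective $\wedge$ is interpreted by the algebraic join and $\lor$ by the algebraic meet.

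First, fix a Kripke valuation $V$ and define the algebraic valuation $v(p)=V(p)\in\up(\mathscr{P})$ for each atom $p$. I will prove by induction on $\alpha$ that the Brouwer-algebra value of $\alpha$ under $v$ equals $\|\alpha\|_V$.
\begin{itemize}
\item For $\wedge$ the algebraic join is $\cap$, matching the Kripke clause.
\item For $\lor$ the algebraic meet is $\cup$, again matching.
\item For $\neg$, the value is $X\rightarrow 1 = X\rightarrow\emptyset$, which corresponds to $\alpha\rightarrow\bot$.
\end{itemize}

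The main step is implication. I must check that $\bigcup\{Z\in\up(P): Z\cap X\subseteq Y\}$ equals $K=\{p: (\forall q\geq p)[q\in X\Rightarrow q\in Y]\}$, where $X,Y$ are upsets. The set $K$ is upwards closed and satisfies $K\cap X\subseteq Y$ (take $q=p$), so $K$ is contained in the union. Conversely, suppose $p\in Z$ with $Z$ an upset and $Z\cap X\subseteq Y$. Then every $q\geq p$ lies in $Z$, so $q\in X$ implies $q\in Y$, and hence $p\in K$.

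Conversely, every algebraic valuation (atoms to upsets) is a Kripke valuation, so the two kinds of valuations coincide. Finally, $\alpha$ is an identity of $\up(\mathscr{P})$ iff its value is $0=P$ under every valuation. By the induction this holds iff $\|\alpha\|_V=P$ for every $V$, i.e.\ iff $\alpha$ is forced at every node under every valuation, which is Kripke validity in $\mathscr{P}$.

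If $\mathscr{P}$ is merely a preorder, as in applications to $\mathscr{D}_e$-type structures, one first passes to the quotient poset via Remark~\ref{rem:iso-upsets}, since both the upsets and the Kripke semantics are invariant under this quotient. The only genuine obstacle is keeping the order-reversal bookkeeping straight: $\supseteq$ order, $0=P$, and $\lor$ read as meet. Once the identification $\|\alpha\rightarrow\beta\|=\|\alpha\|\rightarrow\|\beta\|$ is verified, everything else is routine.
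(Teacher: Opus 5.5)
Your proof is correct. You identify, by induction on formulas, the value of a formula in $\up(\mathscr{P})$ with its Kripke truth set, and the only nontrivial check is the implication case, $\bigcup\{Z\in\up(P): Z\cap X\subseteq Y\}=\{p:(\forall q\geq p)[q\in X\Rightarrow q\in Y]\}$, which you verify correctly along with the order reversal ($0=P$, $1=\emptyset$, $\lor$ read as meet). The paper gives no argument of its own and simply cites Chagrov--Zakharyashev, Theorem~7.20, whose content is exactly this frame-versus-upset-algebra duality, so your proposal is the standard argument behind that citation, written out in full.
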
 

\begin{proof}
See for instance \cite[Theorem~7.20]{Chagrov-Zakharyashev}.
\end{proof}

Now, given partial orders $\langle P_1, \leq_1\rangle$, $\langle P_2,
\leq_2\rangle$, a function $f: P_1 \longrightarrow P_2$ is called a
\emph{$p$-morphism} (see \cite{deJongh-Troelstra}) if
\begin{itemize}
  \item $x \leq_1 y$ implies $f(x) \leq_2 f(y)$, for all $x,y \in P_1$;
  \item for every $x \in P_1$, and $y \in P_2$, if $f(x)\leq_2 y$ then
      there exists $z\in P_{1}$ such that $x\le_1 z$ and $f(z)=y$.
\end{itemize}
If there exists a $p$-morphism from $\mathscr{P}_1$ to $\mathscr{P}_2$ then
$\Th_K(\mathscr{P}_1) \subseteq \Th_K(\mathscr{P}_2)$: see
\cite[Corollary~2.17]{Chagrov-Zakharyashev}.

Furthermore, let $\langle 2^{<\omega}, \preceq \rangle$ be the poset where 
$2^{<\omega}$ is the set of $0,1$-valued finite strings and $\preceq$ is the 
partial ordering on strings so that $x \preceq y$ if $x$ is a prefix of $y$. 
It is known: 

\begin{lemma}\cite[Theorem~4.12]{Gabbay-book}
$\IPC=\Th_K\left(\langle 2^{<\omega}, \preceq \rangle\right)$.
\end{lemma}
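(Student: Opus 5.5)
We prove the two inclusions $\IPC \subseteq \Th_K(\langle 2^{<\omega},\preceq\rangle)$ and $\Th_K(\langle 2^{<\omega},\preceq\rangle)\subseteq \IPC$ separately.

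The first inclusion is soundness. Every axiom of $\IPC$ is forced at every node of every Kripke frame, and modus ponens preserves global validity. Hence every theorem of $\IPC$ is valid in $\langle 2^{<\omega},\preceq\rangle$. Equivalently, by Fact~\ref{fact:DeJongh}, every theorem of $\IPC$ is an identity of the Brouwer algebra $\up(\langle 2^{<\omega},\preceq\rangle)$, which we already know from $\IPC\subseteq\Th(\mathscr{L})$ for every Brouwer algebra $\mathscr{L}$.

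For the second inclusion we argue by contraposition. We rely on the finite model property of $\IPC$ for finite rooted trees: if $\alpha\notin\IPC$, then $\alpha$ is refuted at the root of some finite tree $\langle T,\leq_T\rangle$. To see this, take a finite rooted countermodel, which exists by completeness with respect to finite frames via filtration or the canonical model. Then unravel it into the tree of its finite $<$-chains starting at the root. The map sending a chain to its last element is a $p$-morphism, so the tree also refutes $\alpha$, and the tree is finite because strict chains in a finite partial order are bounded.

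The main step is to build a surjective $p$-morphism $f\colon \langle 2^{<\omega},\preceq\rangle\to\langle T,\leq_T\rangle$. Granting it, the cited fact gives
\[
\Th_K(\langle 2^{<\omega},\preceq\rangle)\subseteq \Th_K(T)\not\ni\alpha,
\]
as required. Surjectivity follows from the back condition applied at the root, provided $f(\emptyset)$ is the root of $T$.

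We construct $f$ by induction on the length of strings. Put $f(\emptyset)=r$, the root of $T$. Suppose $f(\sigma)=t$, and let $t$ have immediate successors $t_1,\dots,t_m$ in $T$. We want the cone above $\sigma$ to realise every node above $t$ while $f$ stays monotone. So we set
\[
f(\sigma 1^{j}0)=t_{j+1}\ \ (j<m), \qquad f(\sigma 1^{j})=t \ \text{ for all } j\ge 0,\qquad f(\sigma 1^{j}0)=t \ \text{ for } j\ge m.
\]
We then recurse from each $\sigma 1^j0$ with its assigned value. The nodes $\sigma 1^j$ form a spine labelled $t$, and the branches $\sigma 1^j0$ for $j<m$ peel off toward the children of $t$. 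If $t$ is a leaf, $f$ is constantly $t$ on the whole cone above $\sigma$.

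Monotonicity holds because labels only move from a node to one of its children as strings extend. For the back condition, suppose $f(\tau)=t\leq_T s$ and pick any $\tau$ with this label. By construction $\tau$ lies on some spine labelled $t$ that begins at a node $\sigma$. Extending $\tau$ along $1$'s, we reach a point beyond every index $j$ used on that spine. We therefore first re-index so that every node on the spine, not only its base, starts a fresh sequence of branches to $t_1,\dots,t_m$. With this modification, from any $t$-labelled node we can reach a $t_i$-labelled node, and iterating along the finite path from $t$ to $s$ reaches a node labelled $s$.

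The main obstacle is making this bookkeeping uniform, so that \emph{every} preimage of $t$, not just the spine base, has all children of $t$ available above it. We handle it with the re-indexed construction. From any node labelled $t$, the string $1^{k}0$ moves to child $t_{k+1}$ for $k<m$, and $1^{m}$ returns to a node labelled $t$ that again has the same options. Concretely, define $f(\tau 1^{k}0)=t_{(k \bmod (m+1))+1}$ when $k \bmod (m+1) < m$, and keep the label $t$ otherwise.
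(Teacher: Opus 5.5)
The paper gives no proof of this lemma: it imports it from Gabbay's book and uses only the inclusion $\Th_K(\langle 2^{<\omega},\preceq\rangle)\subseteq\IPC$, in Lemma~\ref{lem:binary-tree}. Your argument is the standard proof of the cited result, and it is correct. It has three steps: soundness; the finite model property of $\IPC$, sharpened to finite rooted trees by unravelling; and a surjective $p$-morphism from $\langle 2^{<\omega},\preceq\rangle$ onto every finite rooted tree. Compared with the bare citation, your route makes explicit why the binary tree is universal for $\IPC$. You are also right to insist on surjectivity, which the paper's formulation of the $p$-morphism fact leaves out and which is needed for the inclusion of theories.

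One correction concerns your last step, where the re-indexing is not needed. In your first construction, the nodes $\sigma1^j0$ with $j\ge m$ are themselves fresh bases labelled $t$. So for any spine node $\sigma1^j$, the string $\sigma1^{\max(j,m)}0\,1^{i-1}0$ extends it and is labelled $t_i$, and the back condition already holds.

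Your verbal description of the modified construction is also inaccurate. From a non-base spine node such as $\tau1$, the string $1^k0$ does not lead to $t_{k+1}$. It is $1^{m+1}$, not $1^m$, that returns to the same pattern. Taken literally, ``every spine node starts a fresh sequence'' would assign conflicting labels; for instance, $\tau10$ would receive both $t_2$ and $t_1$. The mod-$(m+1)$ formula does work if you apply it only at base nodes, for the simpler reason that every child of $t$ labels infinitely many of the nodes $\tau1^k0$.
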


The following definition comes from \cite{Kuyper-Muchnik}: in fact it is one 
of the equivalent characterizations proved by Kuyper of 
\cite[Definition~3.1]{Kuyper-Muchnik}. 

\begin{definition}\label{def:splittingclass}
Let $\mathscr{U}=\langle U, +, \le\rangle$ be an upper semilattice. A 
nonempty and countable downwards~$\leq$-closed $A\subseteq U$ is a 
\emph{splitting class} if and only if for every $a \in A$ and every finite 
$B\subseteq \{b \in A: b\nleq a\}$ there is $c \in A$, $c> a$, such that for 
all $b \in B$, $b+ c \notin A$. 
\end{definition}

If $\mathscr{U}=\langle U, +, \le\rangle$ is a pre-upper semilattice, then we 
say that a nonempty and countable downwards~$\leq$-closed $A\subseteq U$ is a 
splitting class in $\mathscr{U}$ if the set of $\equiv$-classes of the 
elements lying in $A$ form a splitting class in $\mathscr{U}_{/\equiv}$, see 
Remark~\ref{rem:iso-upsets}. Of course, in this case, $A$ is a splitting 
class in $\mathscr{U}$ if and only if $A_{/\equiv}$ is a splitting class in 
$\mathscr{U}_{/\equiv}$. 

The following results (along the lines of \cite{Kuyper-Muchnik}) constitute 
the final ingredients needed to reach our goal. 

\begin{lemma}\cite[Proposition~5.6]{Kuyper-Muchnik}\label{lem:binary-tree}
If $A$ is a splitting class in an upper semilattice $\mathscr{U}$ with a 
least element, then there exists a $p$-morphism $f: \langle A, \leq \rangle 
\longrightarrow \langle 2^{<\omega}, \preceq \rangle$. Hence 
\[
\Th_K\left(\langle A, \leq\rangle\right) \subseteq  
\Th_K\left(\langle 2^{<\omega}, \preceq \rangle\right)=\IPC.
\]
\end{lemma}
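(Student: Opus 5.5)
We will build the $p$-morphism $f\colon \langle A,\leq\rangle \to \langle 2^{<\omega},\preceq\rangle$ by a step-by-step construction over an enumeration of $A$. The "Hence" part is then immediate from the fact, quoted above from \cite[Corollary~2.17]{Chagrov-Zakharyashev}, that a $p$-morphism from $\mathscr{P}_1$ to $\mathscr{P}_2$ gives $\Th_K(\mathscr{P}_1)\subseteq \Th_K(\mathscr{P}_2)$, combined with $\Th_K(\langle 2^{<\omega},\preceq\rangle)=\IPC$. So the work is the construction of $f$.

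Since $A$ is countable, fix an enumeration $a_0,a_1,\dots$ of $A$. We define $f$ in stages on finite sets $F_s\subseteq A$, maintaining three invariants.

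(i) $f$ is monotone on $F_s$.

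(ii) For $x,y\in F_s$, if $f(x)$ and $f(y)$ are $\preceq$-incomparable then $x+y\notin A$.

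(iii) Pending "back" requirements are recorded, one for each pair $(x,\sigma)$ with $x\in F_s$ and $f(x)\preceq\sigma$, of length bounded by the stage.

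The first entry is the least element $0$ of $\mathscr{U}$, which lies in $A$ because $A$ is nonempty and downward closed; we set $f(0)=\emptyset$.

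At stages that add a new element $a=a_s$, we set $f(a)$ to be the longest string among $\{f(x): x\in F_s,\ x\le a\}$. These strings form a $\preceq$-chain: any two such $x,x'\le a$ have $x+x'\le a\in A$, so by (ii) their images are comparable. The longest string exists because $0\le a$. This choice preserves monotonicity. For (ii), if $f(a)$ is incomparable with $f(y)$, then $f(y)$ is incomparable with $f(x)$ for the $x\le a$ achieving the maximum, so $x+y\notin A$. Since $x+y\le a+y$ and $A$ is downward closed, $a+y\notin A$, as required.

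At stages that handle a requirement $(x,\sigma)$, it suffices to treat one immediate successor $\sigma=f(x)\frown i$ at a time, since longer strings are reached by iterating. Here we use the splitting property. Let $B=\{b\in F_s: b\nleq x\}$, and let $c>x$ in $A$ satisfy $b+c\notin A$ for all $b\in B$. Set $f(c)=\sigma$ and add $c$ to $F_s$.

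We then check the invariants for $c$. For $b\in B$, (ii) holds automatically because $b+c\notin A$. For $b\le x$ we have $f(b)\preceq f(x)\prec\sigma$, which gives monotonicity for pairs $b\le c$ with $b\le x$. We must also ensure that no $b\in F_s$ with $b\nleq x$ satisfies $b\le c$. If such a $b$ existed, then $b+c=c\in A$, contradicting $b+c\notin A$; so none exists. Elements $y\geq c$ that appear later are handled by the maximum rule at their own stages.

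Interleaving these two kinds of stages, every element of $A$ eventually receives a value and every requirement $(x,\sigma)$ is eventually met. Monotonicity and the back condition then hold for the limit map $f=\bigcup_s f\restriction F_s$.

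The main obstacle is the bookkeeping: the new element $c$ must not be $\le$-above any already-placed element whose image is off the chain below $\sigma$. The splitting condition is exactly what rules this out, through the downward-closure argument above. A secondary subtlety is that $c$ may already belong to $F_s$; to avoid this we choose $c$ strictly above $x$ and fresh. If every valid $c$ were already placed, we would pick $c'=c+a$ for a fresh $a\le c$, or we would apply splitting with $B$ enlarged, again using downward closure. We expect this to be a routine adjustment.
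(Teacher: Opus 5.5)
Your stagewise construction is correct and is essentially the argument the paper imports from Kuyper's Proposition~5.6, carried out directly in an arbitrary upper semilattice with least element. Two small points remain.

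The check you skip, that no element of $F_s$ lies above the splitting witness $c$, and your freshness worry are both settled by your own trick. Any $y\in F_s$ comparable with $c$ must satisfy $y\le x$, since otherwise $y\in B$ and $y+c\in\{y,c\}\subseteq A$. So $c\notin F_s$ automatically, nothing already placed lies above $c$, and the proposed repair via $c+a$ with $a\le c$ (which is just $c$) should be deleted.

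For the ``Hence'' part, also note that $f$ is onto: $f(0)=\emptyset$ together with the back condition puts every string in the range. Surjectivity is what lets a $p$-morphism transfer validity from $\langle A,\le\rangle$ to all of $\langle 2^{<\omega},\preceq\rangle$.
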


\begin{proof}
See \cite[Proposition~5.6]{Kuyper-Muchnik} where the proof is given when 
$\mathscr{U}$ is the upper semilattice of Turing degrees, but it works for 
every upper semilattice with a least element.
\end{proof}

From these lemmata we get:

\begin{theorem}\cite[Proposition~5.2 and Theorem~5.7]{Kuyper-Muchnik}\label{cor:fund-splitting} If $A$ is a splitting class in 
$\mathscr{D}_e^{set}$ and $\mathcal{A}=\{B\in 2^\omega: \deg_e(B) \in A\}$
then 
\[
\Th\left(\left[\mathbf{0}_{D,w}, 
[\mathcal{A}^c]_{D,w}\right]_{\mathscr{M}_{D,w}}\right)=\IPC,
\]
where $\mathcal{A}^c=2^\omega \smallsetminus \mathcal{A}$.
\end{theorem}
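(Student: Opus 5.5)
We prove the two inclusions separately. The inclusion $\IPC \subseteq \Th(\cdot)$ holds for every Brouwer algebra, since intervals $[\mathbf{0}_{D,w},\mathbf{x}]$ of a Brouwer algebra are Brouwer algebras. So everything rests on the inclusion
\[
\Th\left(\left[\mathbf{0}_{D,w},[\mathcal{A}^c]_{D,w}\right]_{\mathscr{M}_{D,w}}\right) \subseteq \IPC .
\]
The plan is to identify the interval with an algebra of upsets of the Kripke frame $\langle A,\leq\rangle$. We then apply Fact~\ref{fact:DeJongh} and Lemma~\ref{lem:binary-tree}.

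\emph{Step 1: identify the interval.} By the set-version of Theorem~\ref{thm:iso-usl-muchnik}, $\mathscr{M}_{D,w}\simeq \up(\mathscr{D}_e^{set})$. Under this isomorphism a degree $[\mathcal{B}]_{D,w}$ goes to the upward closure of the $e$-degrees of members of $\mathcal{B}$, and the order is reverse inclusion. Since $A$ is downwards closed, $\mathcal{A}^c$ is upwards $\le_e$-closed and corresponds to the upset $U\smallsetminus A$, where $U$ is the set of all $e$-degrees. The bottom $\mathbf{0}_{D,w}$ corresponds to $U$. Hence the interval $[\mathbf{0}_{D,w},[\mathcal{A}^c]_{D,w}]$ is isomorphic to $\langle\{X\in\up(U): X\supseteq U\smallsetminus A\},\supseteq\rangle$. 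This is a Brouwer algebra in its own right, with the restricted operations of $\up(U)$ and top element $U\smallsetminus A$.

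\emph{Step 2: the interval is $\up(\langle A,\le\rangle)$.} Consider the map $X\mapsto X\cap A$. An upset $X\supseteq U\smallsetminus A$ restricts to an upset of the subposet $A$. Conversely, for any upset $Y$ of $A$, the set $Y\cup(U\smallsetminus A)$ is an upset of $U$: if $y\in Y$ and $z\ge y$, then either $z\notin A$, or $z\in A$ and so $z\in Y$. The two maps are mutually inverse bijections and preserve $\supseteq$, so they give an order isomorphism. An order isomorphism between lattices is automatically a lattice isomorphism. It also preserves $\rightarrow$, because $\rightarrow$ is determined by the order as a least element; the constants $0$ and $1$ correspond as well. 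Thus the interval is isomorphic as a Brouwer algebra to $\up(\langle A,\le\rangle)$, and by Fact~\ref{fact:1} the two have the same theory.

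\emph{Step 3: conclude.} By Fact~\ref{fact:DeJongh}, $\Th(\up(\langle A,\le\rangle))=\Th_K(\langle A,\le\rangle)$. Since $\mathscr{D}_e^{set}$ is an upper semilattice with least element $\mathbf{0}_e$, Lemma~\ref{lem:binary-tree} applies to the splitting class $A$ and gives $\Th_K(\langle A,\le\rangle)\subseteq\IPC$.

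The main point needing care is Step 1. Here I would check the bookkeeping: that $\mathcal{A}^c$ really corresponds to the complement upset, and that the top of the interval in $\mathscr{M}_{D,w}$ matches $U\smallsetminus A$ and not the empty upset. This works because the empty upset would correspond to $\mathbf{1}_{D,w}$. I would also check that $\mathcal{A}^c\neq\emptyset$, so that the interval is nontrivial. If $A$ were all of $U$, the interval would be the whole algebra. This cannot happen, since $A$ is countable while $U$ is uncountable.
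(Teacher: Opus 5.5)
Your proposal is correct and is essentially the paper's proof: the paper likewise identifies the interval with $\up(\langle \mathcal{A},\le_e\rangle)$ via $B\mapsto B\cup\mathcal{A}^c$ and then invokes Fact~\ref{fact:DeJongh} and Lemma~\ref{lem:binary-tree}; the only difference is that you work directly with $e$-degrees instead of passing through sets via Remark~\ref{rem:iso-upsets}. One small remark: nontriviality of the interval rests on $A\neq\emptyset$ (so that $\mathcal{A}^c$ contains no c.e.\ set), not on $\mathcal{A}^c\neq\emptyset$, which your argument never actually needs.
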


\begin{proof}
$\mathcal{A}$ is downwards $\leq_e$-closed in  $\langle 2^\omega, 
\leq_e\rangle$, so 
\[
\up\left(\langle \mathcal{A}, \leq_e \rangle\right) \simeq  
\left[2^\omega, \mathcal{A}^c\right]_{\up(\langle 
2^\omega, \leq_e\rangle)}
\]
via the assignment $B \mapsto B \cup \mathcal{A}^c$, for every $B \in \up(\langle 
\mathcal{A}, \leq_e \rangle)$. Since $A$ is a splitting class in
$\mathscr{D}_e^{set}$, then from Lemma~\ref{lem:binary-tree}, and the 
earlier observation in Fact~\ref{fact:DeJongh} that for every partial order 
$\mathscr{P}$ we have $\Th_K(\mathscr{P})=\Th(\up(\mathscr{P}))$, it follows 
that  
\[
\Th_K\left(\langle A, \leq_e \rangle\right)= \Th\left(\up(\langle A, 
\leq_e\rangle)\right)
= \Th\left(\up(\langle \mathcal{A}, \leq_e\rangle)\right)
= \Th\left(\left[2^\omega, \mathcal{A}^c\right]_{\up(\langle 2^\omega, 
\leq_e\rangle)}\right) \subseteq \IPC.
\]
But, $\mathscr{M}_{D,w} \simeq \up(\langle 2^\omega, \leq_e\rangle)$ and, 
under this isomorphism, $2^\omega \mapsto \mathbf{0}_{D,w}$ and 
$\mathcal{A}^c \mapsto [\mathcal{A}^c]_{D,w}$. Hence 
\[
\Th\left(\left[\mathbf{0}_{D,w}, [\mathcal{A}^c]_{D,w}\right]_{\mathscr{M}_{D,w}}
\right)=\IPC.  
\qedhere
\]
\end{proof}

We have just proved that if there exists a splitting class of $e$-degrees, 
then there exists an initial segment of $\mathscr{M}_{D,w}$ which models 
exactly $\IPC$. It is only left to show that splitting classes of $e$-degrees 
do exist. This will be done in the next subsection. 
 
\subsection{A splitting class in the $e$-degrees}

We will show that every countable downwards~$\leq_e$-closed class 
$\mathcal{I}$ of sets can be extended to a countable splitting class. We will 
do this by inductively adding to $\mathcal{I}$ more and more sets, eventually 
getting a countable downwards~$\leq_e$-closed set $\mathcal{I}_f$, with 
enough added elements to witness that for every pair $(A, \mathcal{B})$ where 
$A\in \mathcal{I}_f$ and $\mathcal{B} \subseteq \mathcal{I}_f$ is a finite 
set such that $B \nleq_e A$ for every $B \in \mathcal{B}$, there exists $C 
\in \mathcal{I}_f$ with $C>_e A$ and $C \oplus B \notin \mathcal{I}_f$ for 
every $B \in \mathcal{B}$. The following lemma helps describe the inductive 
steps in the construction of the splitting class. 

Throughout this subsection we will treat a given set $\Gamma$ of numbers as a 
\emph{generalized enumeration operator}, i.e. as a mapping from $2^\omega$ to 
$2^\omega$, such that 
\[
\Gamma(A)=\{x: \QE{u}[\langle x,u\rangle \in \Gamma \,\&\, D_u \subseteq A]\}.
\]
If $\mathcal{A} \subseteq 2^\omega$ then, again, we denote $\mathcal{A}^c= 
2^\omega \smallsetminus \mathcal{A}$.
 
\begin{lemma}\label{lem:a-splitting}
Suppose that $\mathcal{I},\mathcal{G}$ are sets of sets of numbers and 
that $A, B$ are sets such that 

\begin{enumerate}
  \item[(a)] $\mathcal{I}$ is countable and downwards~$\leq_e$-closed;
  \item[(b)] $\mathcal{G} \subseteq \mathcal{I}^c$ is countable and closed 
      under $\equiv_e$; 
  \item[(c)] $A, B \in \mathcal{I}$ and $B \nleq_e A$. 
\end{enumerate} 

Then there exists a generalized enumeration operator $\Gamma$ satisfying 

\begin{enumerate}
  \item[(0)] $\Gamma \oplus A \nleq_e A$
  \item if $X \in \mathcal{I}$ then $\Gamma \oplus A\geq_e X \textrm{ if 
      and only if } A \geq_e X$; 
  \item if $X \in \mathcal{G}$ then $\Gamma \oplus A \ngeq_e X$; 
  \item $\Gamma(B) \notin \mathcal{I}$;
  \item $\Gamma\oplus A \ngeq_e \Gamma(B)$.
\end{enumerate}
\end{lemma}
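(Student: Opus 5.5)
We will build $\Gamma$ as the union of a sequence of finite sets $\Gamma_0 \subseteq \Gamma_1 \subseteq \cdots$ of axioms $\langle x, u\rangle$. The construction is a finite-extension (forcing-style) argument relative to $A$, in the spirit of building an $e$-generic over $A$. Since $\mathcal{I}$ and $\mathcal{G}$ are countable, list $\mathcal{I}=\{X_0,X_1,\ldots\}$ and $\mathcal{G}=\{Y_0,Y_1,\ldots\}$, together with all enumeration operators $\Phi_e$. We interleave the following requirements.

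\begin{itemize*}
\item $R^0_e$: $A \neq \Phi_e(\Gamma \oplus A)$ composed suitably; concretely, make $\Gamma \oplus A \neq \Phi_e(A)$. This gives (0).
\item $R^1_{e,n}$: if $X_n \nleq_e A$ then $X_n \neq \Phi_e(\Gamma\oplus A)$. This gives the nontrivial direction of (1); the other direction is trivial.
\item $R^2_{e,n}$: $Y_n \neq \Phi_e(\Gamma\oplus A)$. This gives (2).
\item $R^3_n$: $\Gamma(B) \neq X_n$. Since $\mathcal{I}$ is downward closed, this is equivalent to $\Gamma(B)\notin\mathcal{I}$, giving (3).
\item $R^4_e$: $\Gamma(B)\neq \Phi_e(\Gamma\oplus A)$. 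This gives (4).
\end{itemize*}

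To keep $\Gamma(B)$ controllable, we only ever add axioms $\langle x,D\rangle$ with $D\subseteq B$ finite (enumerating $x$ into $\Gamma(B)$), or we deliberately refrain. We also reserve a fresh infinite set of columns for coding. Note that axioms with $D\not\subseteq B$ do nothing to $\Gamma(B)$ but do change $\Gamma$ as a set. $R^0_e$ is met trivially by adding or withholding a fresh axiom. $R^3_n$ is met by choosing a fresh $x$ and deciding whether $x\in\Gamma(B)$, which is possible because $x$'s membership is untouched by previous finite conditions if we promise never to add $\langle x,\cdot\rangle$ later.

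For $R^1,R^2$ we use the standard genericity dichotomy. Given a finite condition $\sigma$ (a finite $\Gamma_s$ plus a finite set of ``forbidden'' axioms), consider the set $\{\langle y,z\rangle : \exists$ finite extension $\tau\supseteq\sigma$ consistent with the restrictions, with $y\in\Phi_e(\tau\oplus A)\}$. If for some extension $\Phi_e(\tau\oplus A)$ enumerates an element outside the target set, or if some element of the target can be forced out, we extend and are done. Otherwise the target equals $\{y : \exists \tau\supseteq\sigma\,[y\in\Phi_e(\tau\oplus A)]\}$, which is $\leq_e A$ because the forbidden set is finite and the extensions are finite, hence c.e.\ relative to enumerations of $A$. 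For $R^1$ this contradicts $X_n\nleq_e A$. For $R^2$ it is impossible because $Y_n\notin\mathcal{I}$ while $A\in\mathcal{I}$ and $\mathcal{I}$ is downward closed. A key subtlety is that extensions must respect the finitely many commitments ``$x\notin\Gamma(B)$''. Those commitments are a finite set of forbidden pairs, so the search remains $A$-enumerable.

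The main obstacle is $R^4_e$, which involves $B$, and $B\nleq_e A$ is exactly what must be used. We try the following, in this order.
\begin{enumerate*}
\item Pick a fresh $x$ and ask whether some admissible $\tau$ puts $x$ into $\Phi_e(\tau\oplus A)$. If none does, add $\langle x,D\rangle$ with $D\subseteq B$ finite, for instance $D=\emptyset$, so that $x\in\Gamma(B)\setminus\Phi_e(\Gamma\oplus A)$.
\item If some $\tau$ does, we would like to forbid $x$ from $\Gamma(B)$. But $\tau$ may itself contain axioms for $x$, so we use axioms of the form $\langle x,\{b\}\rangle$ to tie $x$ to $B$.
\end{enumerate*}
Concretely, we code $B$ into $\Gamma(B)$ on the column for $e$: put $\langle\langle e,b\rangle,\{b\}\rangle$ into $\Gamma$ for all $b$. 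Then $\Gamma(B)\supseteq$ a copy of $B$ on that column. Meanwhile, on this column $\Gamma$ itself is computable, so $\Gamma\oplus A$ reveals no information about $B$ there. If $\Phi_e(\Gamma\oplus A)$ equalled $\Gamma(B)$, then $B$ would be enumerable from $\Gamma\oplus A$. We must then ensure that $B\nleq_e\Gamma\oplus A$; this follows from (1) applied to $X=B\in\mathcal{I}$ with $B\nleq_e A$. So $R^4$ reduces to $R^1$ for $X_n=B$, and the genericity argument above handles it. The whole coding is compatible with $R^3$ provided the $R^3$ witnesses lie on separate columns. The only care needed is to check that the coding axioms, being a computable infinite set, do not destroy the $A$-enumerability of the dichotomy sets, and indeed they do not.
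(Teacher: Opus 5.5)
Your proposal is correct in substance, but it takes a genuinely different route from the paper for item (4). Items (0)--(3) are handled in both by the same density arguments.

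\textbf{How the paper proves (4).} The paper forces with triples $\langle\Gamma_p,B_p,N_p\rangle$, where every new axiom must contain the finite set $B_p\subseteq B$. It proves (4) by a three-case analysis:
\begin{enumerate}
\item Some extension makes $\Theta(\Gamma\oplus A)$ enumerate a fresh $m$ using only new axioms containing some $d\notin B$. Then $m$ is added to $N$.
\item Adding some $d\in B$ to $B_p$ forces $m\notin\Theta(\Gamma\oplus A)$. Then the axiom $\langle m,B_p\cup\{d\}\rangle$ is added.
\item Otherwise, $B$ is enumerated from $A$ by a positive search, contradicting $B\nleq_e A$.
\end{enumerate}

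\textbf{How you prove (4).} You place the computable family of axioms $\langle\langle c,b\rangle,\{b\}\rangle$, $b\in\omega$, on a reserved column $c$. This gives $\Gamma\equiv_e$ its generic part while $B\le_e\Gamma(B)$. Item (4) then follows formally from (1) applied to $X=B$. The paper notes the same idea only in passing, in Corollary~\ref{cor:splitting1}, for $B\oplus C$.

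\textbf{Comparison.} Your route eliminates the $B_p$ component and the case analysis entirely, and one coding column suffices rather than one per $e$. The paper's route diagonalizes $\Gamma(B)$ directly instead of making it compute $B$. Nothing later in the paper needs that extra feature, since only (0)--(4) are used in Corollary~\ref{cor:splitting1} and Lemma~\ref{lem:a-splitting-2}.

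\textbf{Cleanups needed.}
\begin{itemize}
\item Delete the sentence saying you ``only ever add axioms $\langle x,D\rangle$ with $D\subseteq B$.'' Taken literally it breaks the argument twice. The dichotomy set for $R^1,R^2$ would only be $\le_e A\oplus B$. And if every axiom of $\Gamma$ had $D\subseteq B$, then $\Gamma(B)=\{x:\exists u\,\langle x,u\rangle\in\Gamma\}\le_e\Gamma$, so (4) would fail. Your own coding axioms already violate that sentence, as they must.
\item The admissible extensions must form a computable family: arbitrary finite sets of axioms whose first coordinates avoid the coding column and avoid a finite set $N$ of forbidden \emph{numbers}. Forbidding a number $x$ means forbidding all axioms $\langle x,\cdot\rangle$, exactly as with the paper's $N_p$; finitely many forbidden ``pairs'' would not keep $x$ out of $\Gamma(B)$.
\item Avoiding the coding column in every extension, including the $R^0$, $R^1$ and $R^2$ steps and not only $R^3$, is what guarantees that $\Gamma(B)$ on column $c$ is exactly $\{\langle c,b\rangle: b\in B\}$. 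This is what $B\le_e\Gamma(B)$ requires.
\end{itemize}
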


\begin{proof}
Obtain $\Gamma$ by forcing with $\langle P, \leq_P \rangle$ where 
\begin{enumerate}
  \item[I.] each condition $p \in P$ is a triple $p=\langle \Gamma_p, B_p, 
      N_p\rangle$, where 
      
\begin{enumerate}
  \item[-] $\Gamma_p$ is a finite set of axioms $\langle n, u\rangle$ 
      where $n\in \omega$ and $u$ is the canonical index of the finite 
      set $D_u$; 
  \item[-] $B_p$ is a finite subset of $B$;
  \item[-] $N_p$ is a finite subset of $\omega$.
\end{enumerate}
  \item[II.] $p\leq_P q$ if
  \begin{enumerate}
    \item[-] $\Gamma_q \subseteq \Gamma_p$ (each axiom of $\Gamma_q$ is 
        an axiom of $\Gamma_p$);
    \item[-] $B_q \subseteq B_p$ and $N_q \subseteq N_p$;
    \item[-] $\QA{\langle n, u\rangle \in \Gamma_p \smallsetminus 
        \Gamma_q} \left[ B_p \subseteq D_u \right]$ (the new axioms must 
        use $B_p$); 
    \item[-] $\QA{\langle n, u\rangle \in \Gamma_p \smallsetminus 
        \Gamma_q} \left[ n \notin N_p \right]$ (no new axiom enumerates 
        elements of $N_p$). 
  \end{enumerate}
\end{enumerate} 

We will exhibit appropriate dense sets, so that if $\Gamma$ comes from a 
sufficiently generic filter then it satisfies the claims of the lemma.

The following sublemma addresses item (0) of Lemma~\ref{lem:a-splitting}. 

\begin{sublemma}\label{lem:0}
$\Gamma \nleq_e A$.
\end{sublemma}

\begin{proof}[Proof~of~Sublemma~\ref{lem:0}]
Let $\Theta$ be an $e$-operator. The set of conditions 
\[ 
\mathcal{C}=\{p: \QE{m}\left[m \in \Theta(A) \,\&\, p \Vdash m 
\notin \Gamma\right]\} \cup
\{\QE{m}\left[m \notin \Theta(A) \,\&\, p \Vdash m \in \Gamma\right]\}
\]
is dense. To see this, suppose $p=\langle \Gamma_p, B_p, N_p\rangle$ is a 
condition. Let $m=\langle m_0, m_1\rangle$ be such that $m, m_0, m_1$ are 
larger than any number mentioned in $p$, with $D_{m_1} \supseteq B_p$.
If $m \in \Theta(A)$, then let
$q=\langle \Gamma_p, B_p, N_p\cup \{m_0\}\rangle$
so that $q\Vdash \langle m_0, m_1\rangle \notin \Gamma$.
If $m \notin \Theta(A)$, then let $q=\langle \Gamma_p \cup \{m\}, B_p, N_p\rangle$.
In either case there is a condition $q \leq_P p$ such that $q \in \mathcal{C}$. 
\end{proof}

The following sublemma addresses items (1) and (2) of 
Lemma~\ref{lem:a-splitting}. 
 
\begin{sublemma}\label{lem:1}
For $X \in \mathcal{I}$,  $\Gamma \oplus A\geq_e X $ if and only if $A \geq_e 
X$. For $X \in \mathcal{G}$, $\Gamma \oplus A \ngeq_e X$. 
\end{sublemma}

\begin{proof}[Proof~of~Sublemma~\ref{lem:1}] 
Let $X \in \mathcal{I} \cup \mathcal{G}$. Suppose that $\Theta$ is an $e$-operator and 
that $\Theta(\Gamma\oplus A)=X$. This implies that the set of conditions 
\begin{align*}
\mathcal{C} &=\{p: p \Vdash \Theta(\Gamma\oplus A)\ne X\}\\
            &=\{p: \QE{m}\left [m \notin X \,\&\, p \Vdash m \in \Theta(\Gamma 
      \oplus A)\right]\} \cup \{p: \QE{m} \left[ m \in X \,\&\, p \Vdash m \notin 
      \Theta(\Gamma \oplus A) \right]\} 
\end{align*}
is not dense. (Here, if $p=\langle \Gamma_p, B_p, N_p \rangle$ is a 
condition, then $p \Vdash m \in \Theta(\Gamma \oplus A)$ means $m \in 
\Theta(\Gamma_p \oplus A)$, and $p \Vdash m \notin \Theta(\Gamma \oplus A)$ 
means that $\QA{q \le_{P} p}[m \notin \Theta(\Gamma_q \oplus A)]$. Analogous 
definitions will be tacitly assumed throughout the proof of 
Lemma~\ref{lem:a-splitting}.) Hence there exists a condition $p$ with no 
extension in this set, which implies 
\[
\QA{m} \big[ m \in X \Leftrightarrow \QE{q\leq_P p} 
[m \in \Theta(\Gamma_q \oplus A) ] \big].
\] 
Then $A\geq_e X$: indeed, $A$ enumerates $m \in X$ by searching for a 
condition $q \leq_P p$ forcing $m \in \Theta(\Gamma_q\oplus A)$. The set of 
such conditions $q$ is positively c.e. in $A$. So we conclude that 
\[
\QA{X \in \mathcal{I} \cup \mathcal{G}}[ \Gamma \oplus A \geq_e
X \Leftrightarrow A \geq_e X]. \qedhere
\] 
\end{proof}

The next sublemma addresses item (3) of Lemma~\ref{lem:a-splitting}. 

\begin{sublemma}\label{lem:2}
$\Gamma(B) \notin \mathcal{I}$. 
\end{sublemma}

\begin{proof}[Proof~of~Sublemma~\ref{lem:2}] 
For every condition $p=\langle \Gamma_p, B_p, N_p\rangle$ and $X \in 
\mathcal{I}$, condition $p$ has an extension in the set $\mathcal{C}$ of 
conditions 
\[
\mathcal{C}=\{q: \QE{m} \left[m \in X \,\&\, 
q \Vdash m \notin \Gamma(B)\right]\}
\cup \{q: \QE{m} \left[m \notin X \,\&\, q \Vdash m \in \Gamma(B)\right]\},
\]
which therefore is dense, and thus $X \ne \Gamma(B)$. Indeed, to see density, 
let $p=\langle \Gamma_p, B_p, N_p\rangle$ be a condition, and let $m$ be 
larger than any number mentioned in $p$. If $m\in X$, then the condition 
$q=\langle \Gamma_p, B_p, N_p \cup \{m\} \rangle$ extends $p$ and forces $m 
\notin \Gamma(B)$. If $m\notin X$, then the condition $q=\langle \Gamma_p\cup 
\{\langle m, u\rangle\}, B_p, N_p  \rangle$ where $D_u = B_p$ extends $p$ with $q \Vdash m \in 
\Gamma(B)$. 
\end{proof}

The next sublemma addresses item (4) of Lemma~\ref{lem:a-splitting}. 

\begin{sublemma}\label{lem:3}
$\Gamma\oplus A \ngeq_e \Gamma(B)$.
\end{sublemma}

\begin{proof}[Proof~of~Sublemma~\ref{lem:3}] 
Suppose that $p=\langle \Gamma_p, B_p, N_p\rangle$ is a condition, and let 
$\Theta$ be an $e$-operator. Let $\mathcal{C}$ be the set of conditions 
\begin{multline*}
\mathcal{C}=\{q: \QE{m} \left[ q \Vdash m \in \Theta(\Gamma \oplus A) \,\&\,
q \Vdash m \notin \Gamma(B)\right]\},\\
\cup \{q: \QE{m} \left[q \Vdash m \in \Gamma(B) \,\&\, q \Vdash m \notin 
\Theta(\Gamma \oplus A) \right]\}.
\end{multline*}
In Case~1 and Case~2 below, $m$ and $d$ vary through the numbers greater than 
any number mentioned in $p$. 

\underline{Case~1.} $\QE{m}\QE{d \notin B}\QE{q \leq_P p}
\bigl[m \in \Theta(\Gamma_q\oplus A) \,\&\, \QA{\langle n, u 
\rangle \in \Gamma_q \smallsetminus \Gamma_p}[d \in D_u]\bigr]$.

Let
\[
r=\langle \Gamma_q, B_q,  N_q \cup\{m\}\rangle.
\]
Then $r <_P p$, $r\Vdash   m \notin \Gamma(B)$ since $\Gamma_p$ does not 
mention $m$ and the axioms in $\Gamma_q \smallsetminus \Gamma_p$ do not apply 
to $B$, and $ r \Vdash m \in \Theta(\Gamma \oplus A)$. Hence $p$ has an 
extension $r \in \mathcal{C}$. 

\underline{Case~2.} $\QE{m}\QE{d \in B}[ \langle \Gamma_p, B_p\cup \{d\}, 
N_p\rangle \Vdash m \notin \Theta(\Gamma \oplus A)]$ (thus requiring that 
using $d$ in all new axioms makes it impossible for $\Theta(\Gamma \oplus A)$ 
to enumerate $m$). 

Let $r=\langle \Gamma_p \cup \{\langle m, u \rangle\}, 
B_p\cup\{d\}, N_p \rangle$, where $D_u = B_p\cup \{d\}$. Then $r<_P p$, 
$r\Vdash m \notin \Theta(\Gamma 
\oplus A)$, and $r \Vdash m \in \Gamma(B)$. 

Again, $p$ has an extension $r \in \mathcal{C}$.

\underline{Case~3.} Otherwise. 

Then, for all sufficiently large $d$, 
\begin{equation}\label{eq:case3}
d \in B \Leftrightarrow\\
\QE{m}\QE{q \leq_P p}
\big[m \in \Theta (\Gamma_q \oplus A)\;\&\; \QA{\langle n, u\rangle \in 
\Gamma_q \smallsetminus \Gamma_p} 
    [d \in D_u]\big].
\end{equation}
Indeed, the righthand side of (\ref{eq:case3}) implies that $d\in B$, since 
otherwise Case~1 would hold. On the other hand, if $d \in B$, then 
$\langle\Gamma_p, B_p\cup \{d\}, N_p\rangle \nVdash m \notin \Theta(\Gamma 
\oplus A)$ for every sufficiently large $m$ since Case~2 does not hold. This 
means that there is a $q \leq_P \langle\Gamma_p, B_p\cup \{d\}, N_p\rangle 
\leq_P p$ with $m \in \Theta(\Gamma_q \oplus A)$, and $\QA{\langle n, 
u\rangle \in \Gamma_q \smallsetminus \Gamma_p}[d \in D_u]$ by the definition 
of extension.  Thus the righthand side of (\ref{eq:case3}) holds. 

But then, contradicting the assumption, it follows that $B\leq_e A$ since the 
description in the righthand side of (\ref{eq:case3}) is positive in $A$. 

It follows that every $p$ has an extension in $\mathcal{C}$, so $\mathcal{C}$ 
is a dense set of conditions. 
\end{proof}

In conclusion, if $\Gamma$ is sufficiently generic and intersects the dense 
sets of conditions exhibited in the proof then the generalized enumeration 
operator $\Gamma$ has the desired properties, which satisfy 
Lemma~\ref{lem:a-splitting}. 
\end{proof}

\begin{corollary}\label{cor:splitting1}
Let $\mathcal{I},$ $\mathcal{G}$, $A,B$ be as in the hypotheses of 
Lemma~\ref{lem:a-splitting}, and let $\Gamma$ be the generalized enumeration 
operator built in the proof of the lemma. Define $\mathcal{I}_+= \mathcal{I} 
\cup \{X: X \leq_e C\}$ where $C=\Gamma \oplus A$, and define $\mathcal{G}_+= 
\mathcal{G} \cup \{X: X \equiv_e \Gamma(B)\}$. Then the following hold: 
$\mathcal{I}_+$ and $\mathcal{G}_+$ are countable, $\mathcal{I}_+ \supseteq 
\mathcal{I}$, $\mathcal{G}_+ \supseteq \mathcal{G}$, $\mathcal{I}_+$ is 
downwards~$\leq_e$-closed, $\mathcal{G}_+$ is closed under $\equiv_e$,  
$\mathcal{G}_+ \subseteq (\mathcal{I}_+)^c$, $\Gamma(B) \in \mathcal{G}_+$ 
(hence $\Gamma(B) \notin \mathcal{I}_+$). Finally, $C  >_e A$, $C \in 
\mathcal{I}_+$ (hence $C \notin \mathcal{G}_+$), and $B\oplus C \notin  
\mathcal{I}_+$. 
\end{corollary}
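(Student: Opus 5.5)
The plan is to read each clause of Corollary~\ref{cor:splitting1} off the corresponding item of Lemma~\ref{lem:a-splitting}, together with elementary facts about enumeration reducibility. Countability comes first: $\mathcal{I}$ and $\mathcal{G}$ are countable by hypothesis. The set $\{X : X \leq_e C\}$ is countable, since it is the image of $C$ under the countably many enumeration operators. Likewise the $\equiv_e$-class of $\Gamma(B)$ is countable. The inclusions $\mathcal{I}_+ \supseteq \mathcal{I}$ and $\mathcal{G}_+ \supseteq \mathcal{G}$ are immediate. $\mathcal{I}_+$ is downwards $\leq_e$-closed because it is a union of two downwards closed sets, using transitivity of $\leq_e$ for the second. $\mathcal{G}_+$ is closed under $\equiv_e$ because $\mathcal{G}$ is (hypothesis (b)) and the added part is a full $\equiv_e$-class. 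Trivially $\Gamma(B) \in \mathcal{G}_+$ and $C \in \mathcal{I}_+$. Finally, $C >_e A$ holds because $A \leq_e \Gamma \oplus A$ always, and item (0) gives $C \nleq_e A$.

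The substantive clause is $\mathcal{G}_+ \subseteq (\mathcal{I}_+)^c$. I will split it into four checks.
\begin{enumerate}
\item $\mathcal{G} \cap \mathcal{I} = \emptyset$ holds by hypothesis (b).
\item $\mathcal{G} \cap \{X : X \leq_e C\} = \emptyset$ is exactly item (2).
\item If $X \equiv_e \Gamma(B)$ and $X \in \mathcal{I}$, then $\Gamma(B) \leq_e X$, so $\Gamma(B) \in \mathcal{I}$ by downward closure. This contradicts item (3).
\item If $X \equiv_e \Gamma(B)$ and $X \leq_e C$, then $\Gamma(B) \leq_e \Gamma \oplus A$. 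This contradicts item (4).
\end{enumerate}
These four checks give $\mathcal{G}_+ \cap \mathcal{I}_+ = \emptyset$. The parenthetical consequences $\Gamma(B) \notin \mathcal{I}_+$ and $C \notin \mathcal{G}_+$ then follow at once.

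The last clause, $B \oplus C \notin \mathcal{I}_+$, is the step that needs a small extra idea. The point is that $\Gamma(B) \leq_e B \oplus \Gamma \subseteq_e B \oplus C$ uniformly: to enumerate $\Gamma(B)$ from $B \oplus \Gamma$, enumerate $x$ whenever some $\langle x, u\rangle$ appears in the $\Gamma$-half and $D_u$ appears in the $B$-half. This is an honest enumeration operator, since $\Gamma$ is now treated as an oracle, and $\Gamma \leq_e C$. So if $B \oplus C$ were in $\mathcal{I}_+$, downward closure would put $\Gamma(B)$ in $\mathcal{I}_+$, contradicting $\Gamma(B) \in \mathcal{G}_+ \subseteq (\mathcal{I}_+)^c$.

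I expect no real obstacle. The only point to state carefully is this uniformity: the generalized operator $\Gamma$ is not c.e., so we must use $\Gamma$ as part of the oracle rather than as an operator.
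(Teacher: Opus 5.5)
Your proof is correct and follows essentially the same route as the paper: each clause is read off the corresponding item of Lemma~\ref{lem:a-splitting}, and $B \oplus C \notin \mathcal{I}_+$ follows from $\Gamma(B) \leq_e B \oplus C$ together with downward closure. Your explicit check that $\mathcal{G}$ is disjoint from $\{X : X \leq_e C\}$ via item (2) fills a step the paper passes over when it says it suffices to show $\Gamma(B) \notin \mathcal{I}_+$.
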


\begin{proof}
By construction $\mathcal{I}_+$ and $\mathcal{G}_+$ are countable, 
$\mathcal{I}_+$ is downwards~$\leq_e$-closed, and $\mathcal{G}_+$ is closed 
under $\equiv_e$. To show that $\mathcal{G}_+ \subseteq (\mathcal{I}_+)^c$, it 
is enough to show that $\Gamma(B)\notin \mathcal{I}_+$: this follows from the 
fact that $\Gamma(B)\notin \mathcal{I}$ by Lemma~\ref{lem:a-splitting}(3), 
and $\Gamma(B) \nleq_e \Gamma \oplus A$ by Lemma~\ref{lem:a-splitting}(4). By 
Lemma~\ref{lem:a-splitting}(0) we have that $C >_e A$.  By construction we 
have $C \in \mathcal{I}_+$. Finally, notice that $\Gamma(B) \leq_e B\oplus 
C$: thus, if $B\oplus C \in  \mathcal{I}_+$ then $\Gamma(B)\in \mathcal{I}$ 
or $\Gamma(B)\leq_e \Gamma \oplus A$, but both of these two cases have
already been excluded. In fact, $B \oplus C \leq_e \Gamma \oplus A$ can be 
excluded also for the fact that it would give $B\leq_e \Gamma\oplus A$ and 
thus, by Sublemma~\ref{lem:1}, $B \leq_e A$. 
\end{proof} 

\begin{remark}\label{rem:can-look}
We can look at the construction described in the proof of 
Lemma~\ref{lem:a-splitting} as the partial assignment $s_+$, defined on all 
quadruples $(\mathcal{I}, \mathcal{G}, A, B)$ as in the hypotheses of 
Lemma~\ref{lem:a-splitting}, such that
\[
s_+(\mathcal{I}, \mathcal{G}, A, B)= (\mathcal{I}_+, \mathcal{G}_+,
\Gamma),
\] 
where
\begin{itemize}
  \item $\mathcal{I}_+$ is the set constructed in 
      Corollary~\ref{cor:splitting1} starting from $\mathcal{I}, 
      \mathcal{G}, A, B$; 
  \item $\mathcal{G}_+$ is the set constructed in 
      Corollary~\ref{cor:splitting1} starting from $\mathcal{I}, 
      \mathcal{G}, A, B$; 
  \item $\Gamma$ is the generalized enumeration operator constructed in 
      Corollary~\ref{cor:splitting1} from $\mathcal{I}, \mathcal{G}, A, B$. 
\end{itemize} 
\end{remark}

\begin{lemma}\label{lem:a-splitting-2}
Suppose that $\mathcal{I},\mathcal{G}$ are sets of sets of numbers, $A$ is a
set, and $\mathcal{B}=\{B_1, \ldots, B_k\}$ is a finite set of sets such that 

\begin{enumerate}
  \item[(a)] $\mathcal{I}$ is countable and downwards~$\leq_e$-closed;
  \item[(b)] $\mathcal{G} \subseteq \mathcal{I}^c$ is countable and closed 
      under $\equiv_e$; 
  \item[(c)] $A \in \mathcal{I}$ and, for every $1\leq  i \leq k$, $B_i \in 
      \mathcal{I}$ and $B_i \nleq_e A$. 
\end{enumerate}
Then there exist sets $\mathcal{I}^+$, $\mathcal{G}^+$, and a set $C$
such that $\mathcal{I}^+$ and 
$\mathcal{G}^+$ are countable, $\mathcal{I}^+ \supseteq \mathcal{I}$, 
$\mathcal{G}^+ \supseteq \mathcal{G}$, $\mathcal{I}^+$ is downwards~ 
$\leq_e$-closed, $\mathcal{G}^+$ is closed under $\equiv_e$,  $\mathcal{G}^+ 
\subseteq (\mathcal{I}_+)^c$, $C 
>_e A$, $C \in \mathcal{I}^+$ (hence $C \notin \mathcal{G}^+$), 
and for every $1\leq i \leq k$, there is a $D \leq_e B_i \oplus C$
with $D \in \mathcal{G}^+$ (hence $D \notin \mathcal{I}^+$).
\end{lemma}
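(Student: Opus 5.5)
We obtain Lemma~\ref{lem:a-splitting-2} by iterating the one-step construction $s_+$ of Remark~\ref{rem:can-look} $k$ times. The subtlety is that each step produces a new $C$ above the current one, and the base of the next step must be that new set rather than $A$. So we build a chain $A=C_0 <_e C_1 <_e \cdots <_e C_k$ and take $C=C_k$. Since $C_{i} \geq_e C_{i-1}$, the witness $D_i=\Gamma_i(B_i) \leq_e B_i\oplus C_i$ will also satisfy $D_i \leq_e B_i\oplus C$.

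Set $\mathcal{I}_0=\mathcal{I}$, $\mathcal{G}_0=\mathcal{G}$, $C_0=A$. Given $(\mathcal{I}_{i-1},\mathcal{G}_{i-1},C_{i-1})$ satisfying (a)--(b) of Lemma~\ref{lem:a-splitting}, with $C_{i-1}\in\mathcal{I}_{i-1}$ and $B_i\nleq_e C_{i-1}$, apply $s_+(\mathcal{I}_{i-1},\mathcal{G}_{i-1},C_{i-1},B_i)=(\mathcal{I}_i,\mathcal{G}_i,\Gamma_i)$. Put $C_i=\Gamma_i\oplus C_{i-1}$ and $D_i=\Gamma_i(B_i)$. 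Corollary~\ref{cor:splitting1} then gives the following: $\mathcal{I}_i\supseteq\mathcal{I}_{i-1}$ is countable and downwards closed; $\mathcal{G}_i\supseteq \mathcal{G}_{i-1}$ is countable, $\equiv_e$-closed and disjoint from $\mathcal{I}_i$; $C_i>_e C_{i-1}$; $C_i\in\mathcal{I}_i$; and $D_i\in\mathcal{G}_i$ with $D_i\leq_e B_i\oplus C_i$. Hypothesis (c) for the next step needs $B_{i+1}\in \mathcal{I}_i$, which holds since $\mathcal{I}_i\supseteq\mathcal{I}$. It also needs $B_{i+1}\nleq_e C_i$.

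That last requirement is the main point to check, and it is exactly what item (1) of Lemma~\ref{lem:a-splitting} provides. Since $B_{i+1}\in\mathcal{I}\subseteq\mathcal{I}_{i-1}$, item (1) for the $i$-th step gives $\Gamma_i\oplus C_{i-1}\geq_e B_{i+1}$ iff $C_{i-1}\geq_e B_{i+1}$. Inductively, we get $B_{j}\leq_e C_i$ iff $B_j\leq_e A$, which is false, for every $j$ and every $i$. So the hypotheses persist through all $k$ steps.

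Finally set $\mathcal{I}^+=\mathcal{I}_k$, $\mathcal{G}^+=\mathcal{G}_k$, $C=C_k$. Countability, monotonicity, downward closure, $\equiv_e$-closure and $\mathcal{G}^+\subseteq(\mathcal{I}^+)^c$ come from the last step. Transitivity gives $C>_e A$, and $C\in\mathcal{I}_k$ directly. The one remaining check is that each $D_i\in\mathcal{G}_i\subseteq\mathcal{G}_k$ is still outside $\mathcal{I}_k$, even though later steps enlarged $\mathcal{I}$. This is handled by the invariant $\mathcal{G}_j\subseteq(\mathcal{I}_j)^c$ maintained at each step $j$ through condition (b) and Corollary~\ref{cor:splitting1}: each later step only adds sets $\leq_e C_j$ to $\mathcal{I}$, and Lemma~\ref{lem:a-splitting}(2) ensures $C_j$ computes no element of $\mathcal{G}_{j-1}$. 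Hence $D_i\leq_e B_i\oplus C_i\leq_e B_i\oplus C$ and $D_i\in \mathcal{G}^+$, as required.
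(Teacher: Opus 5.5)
Your proposal is correct and follows essentially the same route as the paper. Both iterate $s_+$ $k$ times, using as the next base $C_i=\Gamma_i\oplus C_{i-1}$ (which is $\equiv_e$ to the paper's $\hat{\Gamma}_i\oplus A$, where $\hat{\Gamma}_i=\Gamma_i\oplus\hat{\Gamma}_{i-1}$), and both take $C=C_k$, $\mathcal{I}^+=\mathcal{I}_k$, $\mathcal{G}^+=\mathcal{G}_k$ and $D_i=\Gamma_i(B_i)\leq_e B_i\oplus C$. You are in fact slightly more explicit than the paper in checking, via item~(1) of Lemma~\ref{lem:a-splitting}, that the hypothesis $B_{i+1}\nleq_e C_i$ survives each step; the paper leaves this implicit in its inductive claims.
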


\begin{proof}
Let us start with $\mathcal{I},\mathcal{G}$ and a pair $(A, \mathcal{B})$ as 
in the statement of this lemma. Define a sequence 
$\{(\mathcal{I}_i, \mathcal{G}_i, \Gamma_i): 1 \leq i \leq k\}$ as follows, 
where $s_+$ is the partial function defined in Remark~\ref{rem:can-look}:
\begin{itemize}
  \item $(\mathcal{I}_1, \mathcal{G}_1, \Gamma_1) = 
      s_+(\mathcal{I}_0,\mathcal{G}_0, A, B_1)$ 
      
  \item $(\mathcal{I}_{i+1}, \mathcal{G}_{i+1}, \Gamma_{i+1}) = 
      s_+(\mathcal{I}_i, \mathcal{G}_i, \hat{\Gamma}_i \oplus A, B_{i+1})$ 
      for $1 \leq i < k$,
\end{itemize}

where we understand that $\mathcal{I}_0=\mathcal{I}$, 
$\mathcal{G}_0=\mathcal{G}$,  $\hat{\Gamma}_1 = \Gamma$, and 
$\hat{\Gamma}_{i+1}=\Gamma_{i+1}\oplus \hat{\Gamma}_i$. 

The following claims are proved by induction on $i$ with $1 \le i \leq k$, 
using Lemma~\ref{lem:a-splitting} and Corollary~\ref{cor:splitting1}. 

\begin{enumerate}
  \item[(0)] $\hat{\Gamma}_i \oplus A \nleq_e A$ if $i=1$, and 
      $\hat{\Gamma}_i \oplus A \nleq_e \hat{\Gamma}_{i-1} \oplus A$ if 
      $i>1$; 
      
  \item if $X \in \mathcal{I}_{i-1}$, then $\hat{\Gamma}_i \oplus A \geq_e 
      X$ if and only if $\hat{\Gamma}_{i-1} \oplus A \geq_e X$; 
      
  \item if $X \in \mathcal{G}_{i-1}$, then $\hat{\Gamma}_i \oplus A \ngeq_e 
      X$; 
  
  \item $\Gamma_i(B_i) \notin \mathcal{I}_{i-1}$;
  
  \item $\hat{\Gamma}_i \oplus A \ngeq_e \Gamma_i(B_i)$.
\end{enumerate}

Moreover, it immediately follows from the construction and 
Corollary~\ref{cor:splitting1} that for every $1\leq i \leq k$, we have 
$\mathcal{I}_{i-1} \subseteq \mathcal{I}_{i}$, $\mathcal{G}_{i-1} \subseteq 
\mathcal{G}_{i}$, $\hat{\Gamma}_i \oplus A \in \mathcal{I}_i$,
$\Gamma_i(B_i) \in \mathcal{G}_i$, and for every $i \leq 
k$, $\mathcal{G}_{i} \subseteq (\mathcal{I}_i)^c$. 

Define $C = \hat{\Gamma}_k \oplus A$, $\mathcal{I}^+= \mathcal{I}_k$, and 
$\mathcal{G}^+= \mathcal{G}_k$. For each $1 \leq i \leq k$, define $D_i = 
\Gamma_i(B_i)$. Then $\mathcal{I}^+$ and $\mathcal{G}^+$ are countable, 
extend $\mathcal{I}$ and $\mathcal{G}$ respectively, $\mathcal{I}^+$ is 
downwards~$\leq_e$-closed, $\mathcal{G}^+$ is closed under $\equiv_e$,  
$\mathcal{G}^+ \subseteq (\mathcal{I^+})^c$, $C  >_e A$, and $C \in 
\mathcal{I}^+$.  Furthermore, for each $1 \leq i \leq k$, $D_i \in 
\mathcal{G}_i \subseteq \mathcal{G}^+$ and $D_i \leq_e B_i \oplus \Gamma_i 
\leq_e B_i \oplus C$. 
\end{proof}

\begin{remark}\label{rem:final}
We can look at the construction described in the proof of 
Lemma~\ref{lem:a-splitting-2} as the (partial) assignment 
\[
s_{++}(\mathcal{I}, \mathcal{G}, A, \mathcal{B})= 
(\mathcal{I}^+, \mathcal{G}^+, C)
\] 
where $\mathcal{I}^+$, $\mathcal{G}^+$, and $C$ are as in the
conclusion of Lemma~\ref{lem:a-splitting-2} when starting from
$\mathcal{I}$, $\mathcal{G}$, $A$, and $\mathcal{B}$.
\end{remark}

Finally, 
\begin{theorem}\label{thm:final}
For any pair  $\mathcal{I},\mathcal{G}$ of sets of numbers such that 
\begin{enumerate}
  \item[(a)] $\mathcal{I}$ is countable and downwards~$\leq_e$-closed;
  \item[(b)] $\mathcal{G} \subseteq \mathcal{I}^c$ is countable and closed 
      under $\equiv_e$,
\end{enumerate} 
there exists a pair of countable sets $(\mathcal{I}_f, \mathcal{G}_f)$ such 
that $(\mathcal{I}_f, \mathcal{G}_f)\supseteq (\mathcal{I}, \mathcal{G})$ 
(meaning that $\mathcal{I}_f \supseteq \mathcal{I}$ and $\mathcal{G}_f 
\supseteq \mathcal{G}$), $\mathcal{I}_f$ is downwards~$\leq_e$-closed, 
$\mathcal{G}_f$ is closed under $\equiv_e$, $\mathcal{G}_f \subseteq 
(\mathcal{I}_f)^c$, and $\mathcal{I}_f$ forms a splitting class. 
\end{theorem}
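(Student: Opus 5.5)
We will build $(\mathcal{I}_f,\mathcal{G}_f)$ as the union of an increasing $\omega$-sequence of pairs obtained by iterating the assignment $s_{++}$ of Remark~\ref{rem:final}, using a bookkeeping device so that every requirement eventually gets attended to. Put $(\mathcal{I}^0,\mathcal{G}^0)=(\mathcal{I},\mathcal{G})$. At each stage $s$ the pair $(\mathcal{I}^s,\mathcal{G}^s)$ will satisfy hypotheses (a) and (b): $\mathcal{I}^s$ countable and downwards $\leq_e$-closed, $\mathcal{G}^s\subseteq(\mathcal{I}^s)^c$ countable and closed under $\equiv_e$. Since every $\mathcal{I}^s$ is countable, the requirements that arise are the pairs $(A,\mathcal{B})$ with $A\in\mathcal{I}^s$ and $\mathcal{B}$ a finite subset of $\{B\in\mathcal{I}^s: B\nleq_e A\}$. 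We fix a bijection $\pi:\omega\to\omega\times\omega$ with $\pi(s)=(t,n)$ implying $t\le s$. At stage $t$ we enumerate the countably many requirements available at that stage as $R_{t,0},R_{t,1},\dots$.

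At stage $s+1$, let $\pi(s)=(t,n)$ and $R_{t,n}=(A,\mathcal{B})$. Because $\mathcal{I}^t\subseteq\mathcal{I}^s$, we still have $A\in\mathcal{I}^s$ and $\mathcal{B}\subseteq\mathcal{I}^s$. The condition $B\nleq_e A$ involves only $A$ and $B$, so it is unaffected by the enlargement. We then set $(\mathcal{I}^{s+1},\mathcal{G}^{s+1},C_s)=s_{++}(\mathcal{I}^s,\mathcal{G}^s,A,\mathcal{B})$. If $\mathcal{B}=\emptyset$, we simply use Lemma~\ref{lem:a-splitting} with some dummy $B$, or add $C=A\oplus G$ for a generic $G$ chosen to keep $\mathcal{G}^s$ avoided; the easiest route is to apply Lemma~\ref{lem:a-splitting-2} with $k=0$, for which the chain is trivial, so we have to produce a $C>_e A$ separately. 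One concrete option is Sublemma~\ref{lem:0} together with Sublemma~\ref{lem:1}, applied with a dummy $B$. Lemma~\ref{lem:a-splitting-2} guarantees that the new pair again satisfies (a) and (b), and it yields $C_s>_e A$ with $C_s\in\mathcal{I}^{s+1}$. For each $B_i\in\mathcal{B}$ it also yields some $D_i\leq_e B_i\oplus C_s$ with $D_i\in\mathcal{G}^{s+1}$.

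Finally we set $\mathcal{I}_f=\bigcup_s\mathcal{I}^s$ and $\mathcal{G}_f=\bigcup_s\mathcal{G}^s$. Countability, downward closure, closure under $\equiv_e$ and the inclusions $\mathcal{I}_f\supseteq\mathcal{I}$, $\mathcal{G}_f\supseteq\mathcal{G}$ are all preserved by unions of chains. Disjointness also passes to the union: an $X\in\mathcal{I}^s\cap\mathcal{G}^{s'}$ would lie in $\mathcal{I}^{m}\cap\mathcal{G}^m$ for $m=\max(s,s')$, which is impossible. Nonemptiness holds because $\emptyset\in\mathcal{I}^1$; alternatively, we add the c.e.\ degree at the start, which is harmless since $\mathcal{G}$ avoids $\mathcal{I}$ and we may assume $\mathcal{I}\neq\emptyset$ by first closing $\mathcal{I}\cup\{\emptyset\}$ downwards. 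Here one has to check that $\emptyset\notin\mathcal{G}$. This cannot fail if $\mathcal{I}$ is nonempty, since then $\emptyset\in\mathcal{I}$; if $\mathcal{I}$ is empty, we replace it by the c.e.\ sets, which requires that $\mathcal{G}$ contain no c.e.\ set. I expect this degenerate edge case to need a remark.

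For the splitting property, take $A\in\mathcal{I}_f$ and a finite $\mathcal{B}\subseteq\mathcal{I}_f$ with $B\nleq_e A$ for all $B\in\mathcal{B}$. All these sets lie in some $\mathcal{I}^t$, so $(A,\mathcal{B})=R_{t,n}$ for some $n$, and it is handled at the stage $s+1$ where $\pi(s)=(t,n)$. This produces $C=C_s\in\mathcal{I}_f$ with $C>_e A$. For each $B\in\mathcal{B}$ there is $D\leq_e B\oplus C$ with $D\in\mathcal{G}^{s+1}\subseteq\mathcal{G}_f\subseteq(\mathcal{I}_f)^c$. Since $\mathcal{I}_f$ is downwards closed, it follows that $B\oplus C\notin\mathcal{I}_f$. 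This is exactly Definition~\ref{def:splittingclass} read in degrees. The essential point, and the reason $\mathcal{G}$ is carried along, is that later stages must never put a witness $D$ back into $\mathcal{I}$. The invariant $\mathcal{G}^s\subseteq(\mathcal{I}^s)^c$ for every $s$ secures this, and that invariant is exactly what Lemma~\ref{lem:a-splitting-2} provides. So the main obstacle is only the bookkeeping, which is standard dovetailing.
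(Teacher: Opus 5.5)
Your proposal is correct and is essentially the paper's argument. You iterate $s_{++}$ over all requirements $(A,\mathcal{B})$, take unions, and use the invariant $\mathcal{G}^s\subseteq(\mathcal{I}^s)^c$ together with downward closure to keep each $B\oplus C$ out of $\mathcal{I}_f$. The only difference is bookkeeping: the paper runs a transfinite iteration of length $\omega^2$ and handles all requirements from $\mathcal{I}_{\omega\cdot j}$ in the block $\omega\cdot j+1,\omega\cdot j+2,\dots$, whereas you dovetail them into one $\omega$-sequence, which is purely cosmetic.

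Your remarks on the degenerate cases go beyond the paper and are right. The construction in Lemma~\ref{lem:a-splitting-2} produces no $C$ when $\mathcal{B}=\emptyset$; Sublemmas~\ref{lem:0} and~\ref{lem:1} do not use hypothesis (c), so they supply a $C>_e A$ avoiding $\mathcal{G}$. The statement also genuinely needs $\mathcal{I}\neq\emptyset$ or $\mathcal{G}$ free of c.e.\ sets, since otherwise no nonempty downward closed class can avoid $\mathcal{G}$.
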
  

\begin{proof}
Let $\mathcal{I},\mathcal{G}$ be as in the statement of the theorem. We 
define by transfinite induction on $\beta \leq \omega^2$, a sequence of pairs 
$(\mathcal{I}_\beta, \mathcal{G}_\beta)$ such that $(\mathcal{I}_\beta, 
\mathcal{G}_\beta)\subseteq (\mathcal{I}_\gamma, \mathcal{G}_\gamma)$ for 
every $\beta < \gamma \leq \omega^2$. Our desired final pair $(\mathcal{I}_f, 
\mathcal{G}_f)$ will be $(\mathcal{I}_{\omega^2}, \mathcal{G}_{\omega^2})$. 

\medskip
\paragraph{\emph{$\beta=0$.}}
Let $(\mathcal{I}_0, \mathcal{G}_0)= (\mathcal{I}, \mathcal{G})$.

\medskip
\paragraph{\emph{$\beta$ successor.}} 
Suppose that $\beta=\omega\cdot j +i$ where $i,j$ are natural numbers such 
that $j \ge 0$ and $i>0$. Let $\{(A_r^{\omega \cdot j}, \mathcal{B}_r^{\omega 
\cdot j}): r>0\}$ be an enumeration of all pairs such that $A_r^{\omega \cdot 
j}\in \mathcal{I}_{\omega\cdot j}$ and $\mathcal{B}_r^{\omega \cdot j}$ is a 
finite subset of $\mathcal{I}_{\omega\cdot j}$ such that $B \nleq_e 
A_r^{\omega \cdot j}$ for all $B \in \mathcal{B}_r^{\omega \cdot j}$. 

Let $s_{++}$ be the partial function defined in Remark~\ref{rem:final}, and 
suppose 
\[
s_{++}(\mathcal{I}_{\omega\cdot j+i-1}, \mathcal{G}_{\omega\cdot j+i-1}, 
A_i^{\omega\cdot j},  \mathcal{B}_i^{\omega\cdot j}) 
= (\mathcal{I}^+, \mathcal{G}^+, C).
\]
Define $\mathcal{I}_{\beta} = \mathcal{I}^+$ and $\mathcal{G}_{\beta} = 
\mathcal{G}^+$, and also record that $C_\beta = C$.

\medskip
\paragraph{\emph{$\beta$ limit.}}
Define $\mathcal{I}_{\beta}= \bigcup_{\gamma < \beta} \mathcal{I}_{\gamma}$ 
and $\mathcal{G}_{\beta}= \bigcup_{\gamma < \beta} \mathcal{G}_{\gamma}$. 

\medskip

It is easy to see that each of $\mathcal{I}_f$ and $\mathcal{G}_f$ is union 
of a chain of sets. Each of the addenda in the union yielding $\mathcal{I}_f$ 
is downwards~$\le_e$-closed, and so is $\mathcal{I}_f$. Each of the addenda 
in the union yielding $\mathcal{G}_f$ is $\equiv_e$-closed, and so is 
$\mathcal{G}_f$. 

In order to show that $\mathcal{G}_f \subseteq (\mathcal{I}_f)^c$, let us 
prove by ordinal induction on $\beta \leq \omega^2$ that $\mathcal{G}_\beta 
\subseteq (\mathcal{I}_\beta)^c$. The claim is straightforward if $\beta=0$ 
or $\beta$ is limit. So, suppose that $\beta$ is a successor and has the form 
$\beta=\omega \cdot j+i$ with $j \ge 0$ and $i>0$. We are assuming that 
$\mathcal{I}_{\omega\cdot j}$, $\mathcal{G}_{\omega\cdot j}$, 
$A_i^{\omega\cdot j}$, and $\mathcal{B}_i^{\omega\cdot j}$ satisfy the 
hypotheses of Lemma~\ref{lem:a-splitting-2}, so $\mathcal{I}_{\omega \cdot 
j+i-1}$, $\mathcal{G}_{\omega \cdot j+i-1}$, $A_i^{\omega\cdot j}$, and 
$\mathcal{B}_i^{\omega\cdot j}$ also satisfy these hypotheses because 
$\mathcal{I}_{\omega \cdot j} \subseteq \mathcal{I}_{\omega \cdot j+i-1}$ and 
$\mathcal{G}_{\omega \cdot j+i-1} \subseteq (\mathcal{I}_{\omega \cdot 
j+i-1})^c$. Thus $s_{++}(\mathcal{I}_{\omega\cdot j+i-1}, 
\mathcal{G}_{\omega\cdot j+i-1}, A_i^{\omega\cdot j},  
\mathcal{B}_i^{\omega\cdot j})$ produces $\mathcal{I}_\beta$ and 
$\mathcal{G}_\beta$ with $\mathcal{G}_\beta \subseteq (\mathcal{I}_\beta)^c$. 

Finally, let us show that $\mathcal{I}_f$ is a splitting class. Suppose that 
$A \in \mathcal{I}_f$ and that $\mathcal{B}=\{B_1, \ldots, B_k\} \subseteq 
\mathcal{I}_f$ is a finite set such that $B_r \nleq_e A$ for every $1\le r 
\le k$. Suppose that $j \geq 0$ is least such that $A \in \mathcal{I}_{\omega 
\cdot j}$ and $\mathcal{B} \subseteq \mathcal{I}_{\omega \cdot j}$. Hence, 
for some $i >0$, $A=A_i^{\omega \cdot j}$ and 
$\mathcal{B}=\mathcal{B}_i^{\omega \cdot j}$. At stage $\beta = {\omega \cdot 
j} + i$, we produced $(\mathcal{I}_\beta, \mathcal{G}_\beta, C_\beta) = 
s_{++}(\mathcal{I}_{\beta-1}, \mathcal{G}_{\beta-1}, A_i^{\omega \cdot j}, 
\mathcal{B}_i^{\omega \cdot j})$ via Lemma~\ref{lem:a-splitting-2}.  Thus 
$C_\beta  >_e A_i^{\omega \cdot j}$ and $C_\beta \in \mathcal{I}_\beta 
\subseteq \mathcal{I}_f$.  Furthermore, $B_r \oplus C_\beta \notin 
\mathcal{I}_f$ for every $1 \leq r \leq k$.  This is because $B_r \oplus 
C_\beta \geq_e D$ for some $D \in \mathcal{G}_\beta \subseteq \mathcal{G}_f$. 
So if $B_r \oplus C_\beta \in \mathcal{I}_f$, then $D$ would be in 
$\mathcal{I}_f$ too because $\mathcal{I}_f$ is downwards~$\le_e$-closed, 
which would contradict that $\mathcal{G}_f \subseteq (\mathcal{I}_f)^c$. Thus 
$C_\beta$ is the required witness for $A$ and $\mathcal{B}$. 
\end{proof}

\subsection{More on splitting classes in the $e$-degrees}\label{ssct:c}

Kuyper in \cite{Kuyper-Muchnik} exhibited several ``natural'' classes of sets 
which form splitting classes in the Turing degrees, including the low  Turing 
degrees, the $1$-generic Turing degrees below the halting set $K$, and the 
Turing degrees of $low_2$ hyperimmune-free functions. He also generalized the 
notion to splitting classes of cardinality $\aleph_1$ (see 
\cite[Definition~6.2]{Kuyper-Muchnik}) for which 
Theorem~\ref{cor:fund-splitting} is still valid, and he showed that under CH 
the full class of Turing degrees of hyperimmune-free functions form an 
$\aleph_1$ splitting class.

We have exhibited a splitting class in the $e$-degrees, but one might object 
that this is, after all, only an ad hoc example. On the other hand, we have 
tested several natural downward $\leq_e$-closed classes of sets, but 
unfortunately they all turned out not to be splitting classes because of 
properties of $\mathcal{K}$-pairs. To illustrate these unsuccessful attempts, 
we need to recall some basic facts about $\mathcal{K}$-pairs in the 
$e$-degrees. 

A pair $\{B_1, B_2\}$ of sets form a \emph{$\mathcal{K}$-pair relative  to a 
set $C$} if there exists a \emph{$\mathcal{K}$-pair-defining} set $M\leq_e C$ 
for the pair, meaning that $B_1 \times B_2 \subseteq M$ and for every 
$\langle x,y\rangle \in M$, either $x \in B_1$ or $y \in B_2$. A 
$\mathcal{K}$-pair relative to $C$ is \emph{nontrivial} if $B_1 \nleq_e C$ 
and $B_2\nleq_e C$. Clearly if $C\leq_e D$, then $\{B_1, B_2\}$ forms also a 
$\mathcal{K}$-pair relative to $D$. A pair $\{B_1, B_2\}$ is a 
\emph{$\mathcal{K}$-pair} if it is a $\mathcal{K}$-pair relative to 
$\emptyset$. Consequently, a $\mathcal{K}$-pair is nontrivial if neither of 
the two sets is c.e. 

In the following, for a given set $X$, we denote $K_X=\{e: e \in \Phi_e(X)\}$ 
and $J_e(X)= K_X \oplus (K_X)^c$. The set $J_e(X)$ is the \emph{enumeration 
jump} of $X$. A set $X$ is \emph{$e$-low} if $J_e(X)\equiv_e J_e(\emptyset)$, 
or equivalently $J_e(X)\equiv_e K^c$. A set $A$ is \emph{quasi-minimal} if 
$A$ is not c.e., and $f \leq_e A$ implies $f$ is computable. 

The following lemma collects several facts on $\mathcal{K}$-pairs, due to 
Kalimullin~\cite{Kalimullin}, used in the next four corollaries. 

\begin{lemma}\label{lem:Kprinc}
Let $\{B_1, B_2\}$ form a $\mathcal{K}$-pair relative to a set $C$. Then 
\begin{enumerate}
  
  \item $\{B_1\oplus C, B_2 \oplus C\}$ form a $\mathcal{K}$-pair relative 
      to $C$; 
  
  \item If $X_1 \leq_e B_1$ and $X_2 \leq_e B_2$, then $\{X_1, X_2\}$ form 
      a $\mathcal{K}$-pair relative to $C$; 
  
  \item If $B_2 \nleq_e C$, then $B_1 \leq_e C \oplus B_2^c$ and  $B_1^c 
      \le_e J_e(C) \oplus B_2$; 
  
  \item If $\{B_1, B_2\}$ is a nontrivial $\mathcal{K}$-pair relative to 
      $C$, then the $e$-degree of $C$ is the meet of the $e$-degrees of $B_1 
      \oplus C$ and $B_2 \oplus C$. (In fact, for every $X\geq_e C$, the 
      $e$-degree of $X$ is the meet of the $e$-degrees of $B_1 \oplus X$ 
      and $B_2 \oplus X$.)  
      
  \item If $\{B_1, B_2\}$ is a nontrivial $\mathcal{K}$-pair, then both 
      $B_1$ and $B_2$ are quasi-minimal.  
\end{enumerate}
\end{lemma}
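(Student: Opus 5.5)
The five items are proved separately by elementary manipulations of $\mathcal{K}$-pair-defining sets. Items (1) and (2) are closure properties, (3) is the key computation, and (4) and (5) follow from (2) and (3). Throughout, let $M \leq_e C$ be a $\mathcal{K}$-pair-defining set for $\{B_1,B_2\}$, and write $M_x=\{y: \langle x,y\rangle \in M\}$.

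For (1), I would build $M'$ from $M$ and $C$ as follows. Put $\langle 2x,2y\rangle$ into $M'$ whenever $\langle x,y\rangle\in M$. Put $\langle u,v\rangle$ into $M'$ whenever $u=2c+1$ or $v=2c+1$ with $c\in C$. Clearly $M'\leq_e C$. Every pair in $(B_1\oplus C)\times(B_2\oplus C)$ lies in $M'$, and every pair in $M'$ has its first coordinate in $B_1\oplus C$ or its second in $B_2\oplus C$.

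For (2), fix $e$-operators with $X_1=\Phi(B_1)$ and $X_2=\Psi(B_2)$. Let $M'$ be the set of $\langle x,y\rangle$ for which there are finite $D_1,D_2$ with $\langle x,D_1\rangle\in\Phi$, $\langle y,D_2\rangle\in\Psi$ and $D_1\times D_2\subseteq M$; then $M'\leq_e C$. If $x\in X_1$ and $y\in X_2$, choose axioms with $D_1\subseteq B_1$ and $D_2\subseteq B_2$; then $D_1\times D_2\subseteq M$. Conversely, suppose $\langle x,y\rangle\in M'$ via $D_1,D_2$ and $x\notin X_1$. Then some $a\in D_1$ lies outside $B_1$. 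For each $b\in D_2$ we have $\langle a,b\rangle\in M$, so $b\in B_2$. Hence $D_2\subseteq B_2$ and $y\in X_2$.

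Item (3) is the heart of the matter. If $x\in B_1$ then $M_x\supseteq B_2$, and if $x\notin B_1$ then $M_x\subseteq B_2$. Moreover $M_x\neq B_2$ for every $x$, since otherwise $B_2\leq_e M\leq_e C$. So $x\in B_1$ iff $(\exists y)[y\in M_x \,\&\, y\in B_2^c]$, which gives $B_1\leq_e C\oplus B_2^c$. Similarly $x\notin B_1$ iff $(\exists y)[y\in B_2 \,\&\, \langle x,y\rangle\in M^c]$. Since $M^c\leq_e K_C^c\leq_e J_e(C)$, this gives $B_1^c\leq_e J_e(C)\oplus B_2$. The only delicate point is the observation $M_x\ne B_2$, which turns the inclusion information into strict inclusions.

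For (4), take $X\geq_e C$. Then $\{B_1,B_2\}$ is a $\mathcal{K}$-pair relative to $X$, so by (1) $\{B_1\oplus X,B_2\oplus X\}$ is a $\mathcal{K}$-pair relative to $X$. Clearly $X$ lies below both. Suppose $Z\leq_e B_1\oplus X$ and $Z\leq_e B_2\oplus X$. By (2), $\{Z,Z\}$ is a $\mathcal{K}$-pair relative to $X$ via some $N\leq_e X$. Then $x\in Z$ iff $\langle x,x\rangle\in N$, so $Z\leq_e X$. Nontriviality is only needed so that the two degrees strictly exceed $\deg_e(X)$.

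For (5), $B_1$ is not c.e. by nontriviality. Let $f$ be total with $\graph(f)\leq_e B_1$. By (2), $\{\graph(f),B_2\}$ is a $\mathcal{K}$-pair. Suppose $\graph(f)$ is not c.e. Applying (3) with the roles of the two sets swapped gives $B_2\leq_e \graph(f)^c$. Since $f$ is total, $\graph(f)^c\leq_e\graph(f)$, so $B_2\leq_e \graph(f)\leq_e B_1$. Then (2) makes $\{B_2,B_2\}$ a $\mathcal{K}$-pair, and the diagonal argument from (4) shows $B_2$ is c.e., contradicting nontriviality. Hence $f$ is computable, and symmetrically for $B_2$.
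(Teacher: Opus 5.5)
Your proof is correct. The paper gives no argument of its own for this lemma; it just refers to Kalimullin. So your write-up is a self-contained replacement for that citation, and it uses the standard elementary arguments:
\begin{itemize}
\item sections $M_x$ of the defining set for (3);
\item the observation that a $\mathcal{K}$-pair of the form $\{Z,Z\}$ relative to $X$ forces $Z\leq_e X$, since $z\in Z$ iff $\langle z,z\rangle\in N$, which with (1) and (2) gives (4) and then (5).
\end{itemize}
Compared with the bare citation, your version makes visible two small facts that are used tacitly. First, $\mathcal{K}$-pairs are symmetric, by transposing $M$; you need this to apply (3) ``with the roles swapped'' in (5). Second, $M\leq_e C$ implies $M\leq_m K_C$ by the s-m-n theorem, and this is what justifies $M^c\leq_e K_C^c\leq_e J_e(C)$ in (3).

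Your argument also shows that nontriviality plays no role in the meet statement of (4). Your closing remark there, that nontriviality makes the two degrees strictly exceed $\deg_e(X)$, is accurate only for $X=C$. For larger $X\geq_e C$, nontriviality relative to $C$ does not rule out $B_i\leq_e X$. In that case the meet is trivially $\deg_e(X)$, so nothing breaks, but the remark should be restricted to $X=C$.
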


\begin{proof}
See \cite{Kalimullin}.
\end{proof}

\begin{corollary}\label{cor:Kprinc}
Let $\{B_1, B_2\}$ form a $\mathcal{K}$-pair below $K^c$. Then 
\[
\QA{C \ngeq_e B_2}[J_e(B_1 \oplus C)\equiv_e J_e(C)].
\] 
\end{corollary}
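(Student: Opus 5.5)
The plan is to push the jump of $B_1\oplus C$ down to $J_e(C)$ using the $\mathcal{K}$-pair machinery of Lemma~\ref{lem:Kprinc}, with $K^c$ absorbing the extra information coming from $B_1$ and $B_2$. Here I read ``$\mathcal{K}$-pair below $K^c$'' as: $\{B_1,B_2\}$ is a $\mathcal{K}$-pair (relative to $\emptyset$) with $B_1,B_2\le_e K^c$.

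The inequality $J_e(C)\le_e J_e(B_1\oplus C)$ is immediate from $C\le_e B_1\oplus C$ and the monotonicity of the enumeration jump. For the converse, recall that $J_e(\emptyset)\equiv_e K^c$. By monotonicity of the jump, $K^c\le_e J_e(C)$, and hence $B_1,B_2\le_e J_e(C)$. Since $J_e(B_1\oplus C)=K_{B_1\oplus C}\oplus (K_{B_1\oplus C})^c$, it suffices to show that both $K_{B_1\oplus C}$ and $(K_{B_1\oplus C})^c$ are $\le_e J_e(C)$.

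The positive half is easy: $K_{B_1\oplus C}\le_e B_1\oplus C$, and $B_1\oplus C\le_e J_e(C)$ because $B_1\le_e K^c\le_e J_e(C)$ and $C\le_e J_e(C)$. For the negative half:
\begin{enumerate}
\item Since a $\mathcal{K}$-pair relative to $\emptyset$ is a $\mathcal{K}$-pair relative to any $C$, $\{B_1,B_2\}$ is a $\mathcal{K}$-pair relative to $C$.
\item By Lemma~\ref{lem:Kprinc}(1), $\{B_1\oplus C, B_2\oplus C\}$ is a $\mathcal{K}$-pair relative to $C$.
\item By Lemma~\ref{lem:Kprinc}(2), applied with $X_1=K_{B_1\oplus C}\le_e B_1\oplus C$ and $X_2=B_2\oplus C$, the pair $\{K_{B_1\oplus C},\,B_2\oplus C\}$ is a $\mathcal{K}$-pair relative to $C$.
\item The hypothesis $C\ngeq_e B_2$ gives $B_2\oplus C\nleq_e C$. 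Lemma~\ref{lem:Kprinc}(3) then yields
\[
(K_{B_1\oplus C})^c\le_e J_e(C)\oplus B_2\oplus C\le_e J_e(C),
\]
using again $B_2\le_e K^c\le_e J_e(C)$.
\end{enumerate}

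The only delicate point is in step~(4): Lemma~\ref{lem:Kprinc}(3) must be applied with the correct roles. The second member of the pair must be $B_2\oplus C$, the one not reducible to $C$, while the set whose complement we bound is $K_{B_1\oplus C}$, which plays the role of ``$B_1$'' there. Once the pair is arranged this way, everything else is routine monotonicity of $J_e$ together with the observation that $K^c\le_e J_e(C)$ for every $C$.
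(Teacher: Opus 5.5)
Your proof is correct and follows essentially the same route as the paper: pass to $\{B_1\oplus C, B_2\oplus C\}$ via Lemma~\ref{lem:Kprinc}(1), shrink it via (2) to a pair containing $K_{B_1\oplus C}$, bound $(K_{B_1\oplus C})^c$ via (3), and absorb $B_1,B_2$ into $J_e(C)$ using $K^c\le_e J_e(C)$. The only difference is cosmetic: the paper takes the second member of the shrunken pair to be $B_2$ rather than $B_2\oplus C$, which changes nothing since $C\le_e J_e(C)$.
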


\begin{proof}
Let $\{B_1, B_2\}$ be a $\mathcal{K}$-pair such that $B_1, B_2 \leq_e K^c$. 
Then $\{B_1, B_2\}$ is also a $\mathcal{K}$-pair relative to any set $C$. 
Hence by Lemma~\ref{lem:Kprinc}(1), $\{B_1\oplus C, B_2 \oplus C\}$ is a 
$\mathcal{K}$-pair relative to $C$, and thus by Lemma~\ref{lem:Kprinc}(2)  
$\{K_{B_1\oplus C}, B_2\}$ is a $\mathcal{K}$-pair relative to $C$. If $C 
\ngeq_e B_2$, by Lemma~\ref{lem:Kprinc}(3) applied to this 
$\mathcal{K}$-pair, we have that $\left( K_{B_1 \oplus C}\right)^c \leq_e 
J_e(C) \oplus B_2$. But $B_2  \leq J_e(\emptyset) \leq_e J_e(C)$, so $\left( 
K_{B_1 \oplus C}\right)^c \leq_e J_e(C)$. On the other hand, $K_{B_1 \oplus 
C} \leq_e B_1 \oplus C \leq_e B_1 \oplus J_e(C) \leq_e J_e(C)$ since $B_1  
\leq_e J_e(\emptyset) \leq_e J_e(C)$. It follows that $J_e(B_1 \oplus 
C)=K_{B_1 \oplus C}\oplus \left( K_{B_1 \oplus C}\right)^c\leq_e J_e(C)$, 
from which we get $J_e(B_1 \oplus C)\equiv_e J_e(C)$. 
\end{proof}

Let $\mathcal{QM}$ be the class of quasi-minimal sets $e$-below 
$J_e(\emptyset)$, plus the c.e.\ sets. 

\begin{corollary}\label{cor:qm}
$\mathcal{QM}$ is not a splitting class. 
\end{corollary}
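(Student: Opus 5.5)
We argue by contradiction, choosing a witness pair $(A,\mathcal{B})$ for which no $C$ can satisfy the splitting condition of Definition~\ref{def:splittingclass}. We first record that $\mathcal{QM}$ is downwards $\leq_e$-closed, so the definition applies. If $X\leq_e Y\in\mathcal{QM}$, then $X\leq_e J_e(\emptyset)$. Moreover, either $X$ is c.e., or $X$ is not c.e.; in the latter case every total $f\leq_e X$ satisfies $f\leq_e Y$, hence is computable (or $Y$ itself is c.e., forcing $X$ c.e.). It is also countable.

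The plan is to use Kalimullin's theory. First fix a nontrivial $\mathcal{K}$-pair $\{B_1,B_2\}$ with $B_1,B_2\leq_e K^c$. The existence of such pairs below $K^c\equiv_e J_e(\emptyset)$ is standard; for instance, one can take a semirecursive non-c.e.\ $\Delta^0_2$ set $B_1$ and $B_2=B_1^c$. By Lemma~\ref{lem:Kprinc}(5), both $B_1$ and $B_2$ are quasi-minimal, and being $\leq_e K^c$ they lie in $\mathcal{QM}$. Take $A=B_1$ and $\mathcal{B}=\{B_2\}$. Then $B_2\nleq_e B_1$: otherwise Lemma~\ref{lem:Kprinc}(3), applied with $C=\emptyset$, gives $B_1\leq_e B_2^c$, and combining this with $B_2\leq_e B_1$ would make both sets computable in a way that contradicts nontriviality. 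Alternatively, by Lemma~\ref{lem:Kprinc}(4), $\mathbf 0_e=\deg B_1\wedge\deg B_2$, so $B_2\leq_e B_1$ would force $B_2$ to be c.e.

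Suppose now that $C\in\mathcal{QM}$ with $C>_eB_1$. We must show $C\oplus B_2\in\mathcal{QM}$. The first requirement is immediate: $C\oplus B_2\leq_e K^c$. The second is quasi-minimality of $C\oplus B_2$. It is not c.e., since it bounds $B_2$. So let $f$ be total with $f\leq_e C\oplus B_2$; we must show $f$ is computable. We use Lemma~\ref{lem:Kprinc}(1),(2) relativized to $C$: $\{B_1\oplus C,B_2\oplus C\}=\{C, B_2\oplus C\}$ up to $\equiv_e$ is a $\mathcal{K}$-pair relative to $C$. Since $B_1\leq_e C$, this pair is trivial, so this route needs adjusting. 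The better route is as follows. The graph of $f$ satisfies $\graph(f)\leq_e C\oplus B_2$, and $\{B_1,\graph f\}$ is then a $\mathcal{K}$-pair relative to $C$ by item (2), since $\graph f\leq_e B_2\oplus C$ and $B_1\oplus C\equiv_e C$. If $\graph f\nleq_e C$, then Lemma~\ref{lem:Kprinc}(3) would give $\graph(f)^c\leq_e C\oplus$ (something), which is not directly useful. Instead we exploit totality: $\graph f$ is total, so $\graph(f)^c\leq_e\graph f$. Applying item (3) with the roles arranged so that the total side is the one reduced, we get $\graph f\leq_e C\oplus B_1^c$ (or a similar bound). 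Since $B_1\leq_e C$ and $f$ is total, the aim is to conclude $f\leq_e C$, whence $f$ is computable by quasi-minimality of $C$. (If $C$ is c.e., then $C\oplus B_2\equiv_e B_2$, which is quasi-minimal.)

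The main obstacle is this last step: transferring quasi-minimality from $C$ and $B_2$ to their join, using the $\mathcal{K}$-pair with the total degree of $f$. I would first try the cleaner statement that in a nontrivial $\mathcal{K}$-pair $\{U,V\}$ relative to $C$, any total $f\leq_e V\oplus C$ is already $\leq_e C$. This should follow from item (4) applied to $X=C\oplus\graph f$, or from a direct symmetric use of item (3) with totality. Corollary~\ref{cor:Kprinc} (jump preservation) would serve as a fallback for checking the $\Delta^0_2$ bound if an alternative presentation of $\mathcal{QM}$ is needed. Once $f\leq_e C$ is established, $f$ is computable, so $C\oplus B_2\in\mathcal{QM}$ for every admissible $C$, and the splitting condition fails for $(B_1,\{B_2\})$.
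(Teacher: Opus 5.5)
There is a genuine gap, and it comes from your choice of witness $(A,\mathcal{B})=(B_1,\{B_2\})$. A better argument for the last step will not fix it.

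Every candidate $C$ must satisfy $C>_e B_1$. So $\{B_1,B_2\}$ is automatically a \emph{trivial} $\mathcal{K}$-pair relative to $C$, and Lemma~\ref{lem:Kprinc} then gives no control over $C\oplus B_2$. Item (3) cannot produce $\graph f\leq_e C\oplus B_1^c$, because that use requires $B_1\nleq_e C$. The conclusion you are aiming at, $f\leq_e C$, is in fact false. Take your own suggested pair, $B_1$ semirecursive and $B_2=B_1^c$. The total set $B_1\oplus B_1^c$, which is the graph of $\chi_{B_1}$, satisfies $B_1\oplus B_1^c\leq_e C\oplus B_2$ for every $C\geq_e B_1$. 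It is not c.e., since $B_1$ is not computable. Hence $C\oplus B_2\notin\mathcal{QM}$ for \emph{every} $C\geq_e B_1$. So any quasi-minimal $C>_e B_1$ below $K^c$ is a splitting witness for your pair, not a counterexample.

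The $\mathcal{K}$-pair argument only works when $C$ is not forced above either member of the pair. You get this by taking $A=\emptyset$ and $\mathcal{B}=\{B_1,B_2\}$, with both sets non-c.e.\ and in $\mathcal{QM}$, which is what the paper does. Now let $C\in\mathcal{QM}$.
\begin{enumerate}
\item If $B_i\leq_e C$, then $B_i\oplus C\equiv_e C\in\mathcal{QM}$.
\item Otherwise $\{B_1,B_2\}$ is a nontrivial $\mathcal{K}$-pair relative to $C$, and your ``cleaner statement'' is true and does the job. Suppose $D\leq_e B_2\oplus C$ is total and $D\nleq_e C$. By items (1) and (2), $\{B_1,D\}$ is a nontrivial $\mathcal{K}$-pair relative to $C$. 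Item (3) and totality give $B_1\leq_e C\oplus D^c\leq_e C\oplus D$. Then item (4) yields $B_1\leq_e C$, a contradiction. So every total set below $B_i\oplus C$ is below $C$, hence c.e., and $B_i\oplus C\in\mathcal{QM}$.
\end{enumerate}
The paper's proof is essentially this. It assumes both $B_1\oplus C$ and $B_2\oplus C$ bound total non-c.e.\ sets $D_1,D_2$, and notes that $\{D_1,D_2\}$ is nontrivial relative to $C$. It gets $D_1\leq_e C\oplus D_2$ from item (3) and totality, then applies item (4) to obtain $D_1\leq_e C$, contradicting quasi-minimality of $C$.
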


\begin{proof}
We show that $\mathcal{QM}$ is not a splitting class by showing that there 
exist quasi-minimal sets $B_1, B_2$ for which there is no set $C 
\in \mathcal{QM}$ such that both $B_1 \oplus C$ and $B_2 \oplus C$ lie 
outside of $\mathcal{QM}$. Thus $\mathcal{QM}$ is not a splitting class since 
Definition~\ref{def:splittingclass} is falsified by taking for instance $a$ 
to be the set $A= \emptyset$ and the finite family $\mathcal{B}=\{b_1, 
b_2\}$ to be the family consisting of the above two sets $\{B_1, B_2\}$, for 
which $B_1\nleq_e A, B_2 \nleq_e A$, but there is no $C\in \mathcal{QM}$ such 
that both $B_1 \oplus C$ and $B_2 \oplus C$ lie outside of $\mathcal{QM}$. To 
show this, take a nontrivial $\mathcal{K}$-pair $\{B_1, B_2\}$. 
By Lemma~\ref{lem:Kprinc}(5), both $B_1$ and $B_2$ are quasi-minimal. Towards 
a contradiction, assume $C$ to be a quasi-minimal set such that both $B_1 
\oplus C$ and $B_2 \oplus C$ are not quasi-minimal. Then $B_1$ and $B_2$ are 
not $e$-reducible to $C$, so $\{B_1 \oplus C$, $B_2 \oplus C\}$ is a nontrivial 
$\mathcal{K}$-pair relative to $C$. Let $D_1$, $D_2$ be two total non c.e.\ 
sets $e$-reducible to $B_1 \oplus C$ and $B_2 \oplus C$ respectively. Then 
$D_1, D_2$ are not $e$-reducible to $C$ (as $C$ is quasi-minimal), so by 
Lemma~\ref{lem:Kprinc}(2) they are a nontrivial $\mathcal{K}$-pair relative 
to $C$. From here (again by Lemma~\ref{lem:Kprinc}(3) and by the totality of 
$D_2$), we have $D_1 \leq_e C \oplus D_2^c \leq_e C \oplus D_2$, which, 
together with $D_1 \leq_e C \oplus D_1$, by Lemma~\ref{lem:Kprinc}(4) implies 
$D_1 \leq_e C$, contradicting that $C$ is quasi-minimal. 
\end{proof}

Contrary to what happens in the Turing degrees, where the sets of low Turing 
degree form a splitting class (see \cite{Kuyper-Medvedev}) we have the 
following. 

\begin{corollary}\label{cor:low}
The sets with low $e$-degree do not form a splitting class in the 
$e$-degrees. 
\end{corollary}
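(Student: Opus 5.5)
We argue exactly as in Corollary~\ref{cor:qm}, with Corollary~\ref{cor:Kprinc} supplying the key fact. Let $\mathcal{L}$ be the class of sets of low $e$-degree, i.e.\ $X$ with $J_e(X)\equiv_e J_e(\emptyset)$. This class is downwards $\leq_e$-closed and countable, since every such $X$ satisfies $X\leq_e J_e(X)\equiv_e K^c$. We falsify Definition~\ref{def:splittingclass} with $a=\deg_e(\emptyset)$ and $\mathcal{B}=\{B_1,B_2\}$, where $\{B_1,B_2\}$ is a nontrivial $\mathcal{K}$-pair with $B_1,B_2\leq_e K^c$ and both $B_i$ $e$-low.

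\emph{Step 1: the pair exists.} We take a nontrivial $\mathcal{K}$-pair of $e$-low sets below $K^c$, which is known from Kalimullin's work. Alternatively, we apply Corollary~\ref{cor:Kprinc} directly. Since $B_2$ is not c.e., taking $C=\emptyset$ gives $J_e(B_1)\equiv_e J_e(\emptyset)$, so $B_1$ is automatically low; symmetrically, $B_2$ is low. So any nontrivial $\mathcal{K}$-pair below $K^c$ suffices. Such a pair can be found, for instance, as $\{A,A^c\}$ for a suitable non-computable set $A$ that is $\Delta_2$ (for semirecursive $A$ the pair $\{A,\overline{A}\}$ is a $\mathcal{K}$-pair), with $A$ chosen so that neither $A$ nor $A^c$ is c.e. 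Then $B_1,B_2\in\mathcal{L}$, and neither $B_i\leq_e\emptyset$.

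\emph{Step 2: no witness $C$ exists.} Suppose $C\in\mathcal{L}$ with $C>_e\emptyset$, and suppose both $B_1\oplus C$ and $B_2\oplus C$ lie outside $\mathcal{L}$. If $C\ngeq_e B_2$, then Corollary~\ref{cor:Kprinc} gives $J_e(B_1\oplus C)\equiv_e J_e(C)\equiv_e J_e(\emptyset)$, so $B_1\oplus C\in\mathcal{L}$, a contradiction. If instead $C\geq_e B_2$, then $B_2\oplus C\equiv_e C\in\mathcal{L}$, again a contradiction. Hence no witness exists and $\mathcal{L}$ is not a splitting class.

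The main obstacle is Step 1: producing, or citing correctly, a nontrivial $\mathcal{K}$-pair whose members are $e$-below $K^c$. Once such a pair is secured, lowness of its members and the rest of the argument are immediate from Corollary~\ref{cor:Kprinc}. I would first try the semirecursive construction (a non-computable semirecursive $\Delta_2$ set, e.g.\ a left cut of a $\Delta_2$ real, with $A$ and $A^c$ both non-c.e.). If verifying nontriviality there is delicate, I would instead cite Kalimullin's existence of nontrivial $\mathcal{K}$-pairs of $\Delta_2$ sets.
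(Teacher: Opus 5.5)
Your proof is correct and follows essentially the same route as the paper. Both take a nontrivial $\mathcal{K}$-pair below $K^c$, get that its members are low from Corollary~\ref{cor:Kprinc} with $C=\emptyset$, and then use Corollary~\ref{cor:Kprinc} again to rule out any low witness $C$ for $a=\deg_e(\emptyset)$. Your explicit case $C\geq_e B_2$ (where $B_2\oplus C\equiv_e C$ is low) fills in a step the paper leaves implicit, and your semirecursive left-cut construction (with a $\Delta_2$ real that is neither left- nor right-c.e.) is a valid concrete source for the pair, which the paper simply cites from Kalimullin.
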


\begin{proof}
The argument is similar to the one given in the previous proof. Take a 
nontrivial $\mathcal{K}$-pair $\{B_1, B_2\}$ below $K^c$ 
(see~\cite{Kalimullin}). By Corollary~\ref{cor:Kprinc} both $B_1$ and $B_2$ 
are $e$-low. Assume $C$ to be an $e$-low set such that both $B_1 \oplus C$ 
and $B_2 \oplus C$ are not $e$-low. Then by Corollary~\ref{cor:Kprinc} we 
have $J_e(B_1 \oplus C)\equiv_e J_e(C)$, which implies that $B_1 \oplus C$ is 
low, a contradiction. Therefore the pair $A=\emptyset$ and 
$\mathcal{B}=\{B_1,B_2\}$ witnesses that the $e$-low sets do not form a 
splitting class.   
\end{proof}

\begin{corollary}\label{cor:arithmetical}
The class of arithmetical sets $A\ngeq_e K^c$ (i.e. the arithmetical sets 
with $e$-degree not above the first $e$-jump) is not a splitting class. 
\end{corollary}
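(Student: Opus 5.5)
We follow the pattern of Corollaries~\ref{cor:qm} and \ref{cor:low}. Let $\mathcal{AR}$ denote the class of arithmetical sets $X$ with $X \ngeq_e K^c$. This class is downwards $\leq_e$-closed and countable. To falsify Definition~\ref{def:splittingclass}, we take $A=\emptyset$ and $\mathcal{B}=\{B_1,B_2\}$, where $\{B_1,B_2\}$ is a nontrivial $\mathcal{K}$-pair with $B_1,B_2\leq_e K^c$; such pairs exist by \cite{Kalimullin}.

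First we check that $B_1,B_2\in\mathcal{AR}$. Both sets are arithmetical, being $\leq_e K^c$. Suppose $K^c\leq_e B_1$. Then $B_2\leq_e K^c\leq_e B_1$, so $B_2\leq_e B_1$. By Lemma~\ref{lem:Kprinc}(4) with $C=\emptyset$, the degree $\mathbf 0_e$ is the meet of $\deg_e(B_1)$ and $\deg_e(B_2)$, so $B_2$ would be c.e., contradicting nontriviality. The same argument shows $K^c\nleq_e B_2$. Since both sets are non-c.e., $B_i\nleq_e\emptyset$.

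Now suppose, towards a contradiction, that some $C\in\mathcal{AR}$ (with $C>_e\emptyset$) has $B_1\oplus C\notin\mathcal{AR}$ and $B_2\oplus C\notin\mathcal{AR}$. Each $B_i\oplus C$ is arithmetical, so the failure must be $K^c\leq_e B_1\oplus C$ and $K^c\leq_e B_2\oplus C$.

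If $B_1\leq_e C$, then $K^c\leq_e B_2\oplus C$ together with $B_2\leq_e K^c$ gives $B_2\leq_e B_1\oplus C\equiv_e C$. Hence both $B_1,B_2\leq_e C$, and so $K^c\leq_e B_1\oplus C\equiv_e C$, which is impossible. So we may assume $B_1,B_2\nleq_e C$. Then $\{B_1,B_2\}$ is a nontrivial $\mathcal K$-pair relative to $C$, because the defining c.e.\ set $M$ is $\leq_e C$. By the parenthetical form of Lemma~\ref{lem:Kprinc}(4) with $X=C$, $\deg_e(C)$ is the meet of $\deg_e(B_1\oplus C)$ and $\deg_e(B_2\oplus C)$. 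Since $K^c$ lies below both joins, we get $K^c\leq_e C$, contradicting $C\in\mathcal{AR}$.

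Hence no witness $C$ exists, and $\mathcal{AR}$ is not a splitting class. The only delicate step is invoking the relativized meet property of Lemma~\ref{lem:Kprinc}(4) in the case $B_i\nleq_e C$. The degenerate case, where one $B_i$ is below $C$, is handled separately as above.
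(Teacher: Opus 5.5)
Your proof is correct and is essentially the paper's argument. You take $A=\emptyset$ and $\mathcal{B}=\{B_1,B_2\}$ a nontrivial $\mathcal{K}$-pair in the class, and you verify its membership explicitly, which the paper leaves implicit; both proofs then derive $K^c\leq_e C$ from $K^c\leq_e B_1\oplus C$ and $K^c\leq_e B_2\oplus C$ via the meet property of Lemma~\ref{lem:Kprinc}(4). Your case split on $B_i\leq_e C$ is harmless but unnecessary, because the parenthetical form of (4), for a pair nontrivial relative to $\emptyset$, applies to every $X$; in that case you should also cite $K^c\leq_e B_1\oplus C$ (not $B_2\oplus C$), which with $B_1\oplus C\equiv_e C$ gives the contradiction at once.
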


\begin{proof}
Take a $\mathcal{K}$-pair $\{B_1, B_2\}$ with $B_1, B_2$ in the class. Let 
$C$ be in the class (in particular $K^c \nleq_e C$), and assume that 
$B_1\oplus C$ and $B_2 \oplus C$ are not in the class. Then since $K^c\leq_e 
B_1\oplus C$ and $K^c\leq_e B_2\oplus C$, by Lemma~\ref{lem:Kprinc}(4) we 
have $K^c\leq_e C$, contradiction. Again, the pair $A=\emptyset$ and 
$\mathcal{B}=\{B_1,B_2\}$ witnesses that the arithmetical sets $\ngeq_e 
K^c$ do not form a splitting class. 
\end{proof}

In view of these negative results we ask:

\begin{question}
Are there natural splitting classes in the $e$-degrees?
\end{question}

\section{Evaluating $\Th(\mathscr{M}_{D})$ and 
$\Th(\mathscr{M}_{D,w})$}\label{sec:total} 

Having shown that there exist initial segments of the Dyment lattice 
$\mathscr{M}_D$ and of the Dyment-Muchnik lattice $\mathscr{M}_{D,w}$ whose 
logics coincide with the theorems of the intuitionistic propositional logic, 
we may also infer: 

\begin{theorem} 
$\Th(\mathscr{M}_{D})=\Th(\mathscr{M}_{D,w})=\JAN$. 
\end{theorem}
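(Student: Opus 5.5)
We prove two inclusions for each lattice: $\JAN \subseteq \Th(\mathscr{M})$ and $\Th(\mathscr{M}) \subseteq \JAN$, where $\mathscr{M}$ is either $\mathscr{M}_D$ or $\mathscr{M}_{D,w}$.

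For the inclusion $\JAN \subseteq \Th(\mathscr{M})$, since $\IPC \subseteq \Th(\mathscr{M})$ for every Brouwer algebra, it suffices to show that every instance of the weak law of excluded middle evaluates to $\mathbf{0}$. The key observation is that in both lattices the greatest element $\mathbf{1}$ (the degree of $\emptyset$) has a simple structure. Concretely, $\neg\MP{A} = \MP{A}\imp\emptyset$.
\begin{itemize}
\item If $\MP{A}=\emptyset$, then $\neg\MP{A}$ is the set of all functions (of the form $\cat{e}{\phi}$ in the Dyment case), which is of degree $\mathbf{0}$.
\item If $\MP{A}\neq\emptyset$, then $\neg\MP{A}=\emptyset$, i.e.\ $\neg\mathbf{A}=\mathbf{1}$.
\end{itemize}
Hence $\neg\mathbf{A}\in\{\mathbf{0},\mathbf{1}\}$, and $\neg\neg\mathbf{A}$ is the other one. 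Therefore $\neg\mathbf{A}\wedge\neg\neg\mathbf{A}=\mathbf{0}$ (recall that $\lor$ is interpreted as meet). Closure of identities under modus ponens and substitution then yields all of $\JAN$.

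For the converse inclusion, let $\alpha\notin\JAN$. We use the known characterization of Jankov logic, as in the Medvedev setting (\cite{Sorbi:Embedding}): $\JAN$ is exactly the set of formulas valid in all Brouwer algebras of the form $\mathscr{L}\oplus\{\top\}$, i.e.\ algebras obtained by adjoining a new top above some algebra $\mathscr{L}$ with $\Th(\mathscr{L})=\IPC$. Equivalently, $\JAN$ is the logic of Brouwer algebras whose top is join-irreducible, i.e.\ in which $1$ is a strictly maximal element covering a largest element below it.

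Now take the initial segment $[\mathbf{0},\mathbf{Z}]$ given by Theorem~\ref{thm:main2} for $\mathscr{M}_D$, or $[\mathbf{0}_{D,w},[\mathcal{A}^c]_{D,w}]$ given by Theorem~\ref{cor:fund-splitting} and Theorem~\ref{thm:final} for $\mathscr{M}_{D,w}$. In each case the theory of this segment is $\IPC$. Write $\mathbf{Z}$ for the top of the segment and consider the subset $S=[\mathbf{0},\mathbf{Z}]\cup\{\mathbf{1}\}$, where $\mathbf{Z}\neq\mathbf{1}$ since the problems involved are nonempty.

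The main step is to check that $S$ is a Brouwer subalgebra of $\mathscr{M}$ and that it is isomorphic to $[\mathbf{0},\mathbf{Z}]\oplus\{\top\}$. Closure of $S$ under $\wedge$ and $\lor$ is clear. For $\imp$, four cases need checking:
\begin{itemize}
\item $\mathbf{1}\imp\mathbf{b}=\mathbf{0}$;
\item $\mathbf{a}\imp\mathbf{1}=\mathbf{1}$ whenever $\mathbf{a}\neq\mathbf{1}$, because $\mathbf{a}\lor\mathbf{c}=\mathbf{1}$ forces $\mathbf{c}=\mathbf{1}$ when $\mathbf{a}$ is the degree of a nonempty problem (a join of nonempty problems is nonempty);
\item for $\mathbf{a},\mathbf{b}\le\mathbf{Z}$, we have $\mathbf{a}\imp\mathbf{b}\le\mathbf{b}\le\mathbf{Z}$, and this value coincides with the implication computed in the interval.
\end{itemize}
Since $S$ is a subalgebra, Fact~\ref{fact:1} gives $\Th(\mathscr{M})\subseteq\Th(S)$. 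Moreover, $\Th(S)$ is the logic of $\mathscr{L}\oplus\{\top\}$ with $\Th(\mathscr{L})=\IPC$, and this logic equals $\JAN$ by the cited characterization.

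The step I expect to be the main obstacle is justifying that the logic of $\mathscr{L}\oplus\{\top\}$ with $\Th(\mathscr{L})=\IPC$ is exactly $\JAN$, rather than merely containing it. My plan is to invoke the standard Jankov/Sorbi result: if $\alpha\notin\JAN$, then $\alpha$ fails in some finite algebra $\mathscr{B}\oplus\{\top\}$. Since $\Th(\mathscr{L})=\IPC$, the algebra $\mathscr{B}$ belongs to the variety generated by $\mathscr{L}$, so it is a homomorphic image of a subalgebra of a power of $\mathscr{L}$. Adding a top commutes with these constructions in the appropriate sense, so $\alpha$ fails in $\mathscr{L}\oplus\{\top\}$ as well. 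If needed, I would cite \cite{Sorbi:Embedding}, where exactly this argument is carried out for the Medvedev lattice.
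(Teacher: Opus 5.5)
Your proof is correct, and your lower bound $\JAN\subseteq\Th(\mathscr{M})$ is the paper's own argument: $\neg\mathbf{A}\in\{\mathbf{0},\mathbf{1}\}$ because $\mathbf{1}$ contains only the empty problem and is join-irreducible.

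For the upper bound you take a different route. The paper only observes that the positive operations of $\mathscr{M}$, applied to elements of $[\mathbf{0},\mathbf{Z}]$, stay in that interval. Hence the positive fragment of $\Th(\mathscr{M})$ is that of $\IPC$, and the paper then invokes Jankov's theorem that $\JAN$ is the largest intermediate logic with this positive fragment. You instead exhibit a concrete Brouwer subalgebra $S=[\mathbf{0},\mathbf{Z}]\cup\{\mathbf{1}\}\cong[\mathbf{0},\mathbf{Z}]\oplus\{\top\}$; your case analysis for closure under $\imp$ is right. You then prove the general lemma that $\Th(\mathscr{L})=\IPC$ implies $\Th(\mathscr{L}\oplus\{\top\})\subseteq\JAN$, using two ingredients: completeness of $\JAN$ for finite algebras $\mathscr{B}\oplus\{\top\}$ (finite rooted frames with a greatest point), and Birkhoff's HSP theorem.

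This trades Jankov's maximality theorem for the finite model property of $\JAN$ plus a routine but necessary verification. It also yields the slightly stronger, reusable fact that adjoining a top to any algebra with logic $\IPC$ gives exactly $\JAN$. When you write out the commutation step, note the following. Homomorphic images and subalgebras commute with adjoining a top on the nose. For powers you only get that $\mathscr{L}^I\oplus\{\top\}$ embeds as a subalgebra of $(\mathscr{L}\oplus\{\top\})^I$, by sending $\top$ to the constant sequence. This suffices, since a power has the same theory as its factor.

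Also drop your gloss that a join-irreducible top means $1$ covers a largest element. That holds only for finite algebras: the chain $\omega+1$ has a join-irreducible top and no coatom. Your argument never actually uses the gloss.
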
 

\begin{proof} 
If $\mathscr{A}$ is a Brouwer algebra, and $\mathscr{B}$ is the Brouwer 
algebra obtained by adding to $\mathscr{A}$ an element on top of all the 
elements of $\mathscr{A}$, then for every \emph{positive} propositional 
formula $\sigma$ (i.e. a formula not containing $\neg$) we have that if 
$\sigma$ is true in $\mathscr{B}$ then is true in $\mathscr{A}$ as well. This 
follows from the following observation: assume that $\sigma$ is true in 
$\mathscr{B}$, and consider any evaluation of $\sigma$ in $\mathscr{A}$. But 
this is also an evaluation of $\sigma$ in $\mathscr{B}$, and gives $0$ in 
$\mathscr{B}$. But then it gives $0$ also in $\mathscr{A}$ as the only 
operation that can take elements of $\mathscr{A}$ (viewed as elements of 
$\mathscr{B}$) to $1$ in $\mathscr{B}$ is $\neg$. It follows that the 
positive propositional formulas that are valid in $\mathscr{M}_{D}$ are 
exactly the positive formulas in $\IPC$. On the other hand $\JAN \subseteq 
\Th(\mathscr{M}_{D})$, as it is easy to see that the greatest element of 
$\mathscr{M}_{D}$ is join-irreducible and this implies that $\neg \alpha \vee 
\neg \neg \alpha$ is valid in $\mathscr{M}_D$, as for every Dyment degree 
$\mathbf{A}$ we have that $\neg \mathbf{A}=\mathbf{1}_{D}$ if $\mathbf{A} \ne 
\mathbf{1}_{D}$, otherwise $\neg \mathbf{A}=\mathbf{0}_{D}$. The result 
follows by maximality of $\JAN$, among the intermediate logics whose positive 
fragment coincides with that of $\IPC$: see \cite{Jankov:Weak}. 

By a similar argument, having shown that there exists an initial segment of 
the Dyment-Muchnik lattice $\mathscr{M}_{D,w}$ whose logic coincides with the 
theorems of the intuitionistic propositional logic, we also conclude that 
$\Th(\mathscr{M}_{D,w})=\JAN$. 
\end{proof}

The identity $\Th(\mathscr{M}_{D})=\JAN$ also follows from the following 
theorem (as transfers to the Dyment lattice a result already proved in 
\cite{Sorbi:Embedding} for the Medvedev lattice), characterizing the finite 
Brouwer algebras that are embeddable in the Dyment lattice. 
\begin{theorem}\label{thm:another} The finite Brouwer algebras that 
are embeddable into $\Dym$ coincide with the finite Brouwer algebras in which 
$0$ is meet-irreducible and $1$ is join-irreducible. 
\end{theorem}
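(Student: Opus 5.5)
The proof splits into necessity and sufficiency.

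\emph{Necessity.} Suppose a finite Brouwer algebra $\mathscr{B}$ embeds into $\Dym$ via a Brouwer-algebra embedding $f$. Then $f(0)=\mathbf{0}_D$ and $f(1)=\mathbf{1}_D$, because embeddings preserve $0$ and $1$. So it is enough to check two facts in $\Dym$ itself.

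First, $\mathbf{1}_D$ is join-irreducible. If $\MP{A},\MP{B}$ are nonempty, then $\MP{A}\oplus\MP{B}$ is nonempty, so the join of two degrees below $\mathbf{1}_D$ stays below $\mathbf{1}_D$. The proof of the $\JAN$ theorem already uses this.

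Second, $\mathbf{0}_D$ is meet-irreducible. If $\MP{A}\wedge\MP{B}\equiv_D\mathbf{0}_D$, then some enumeration operator sends a partial computable function into $\cat{0}{\MP{A}}\cup\cat{1}{\MP{B}}$. The image is partial computable, so one of $\MP{A}$, $\MP{B}$ contains a partial computable function. Alternatively, $\mathbf{0}_D$ is the degree of the set of all partial functions, which is Dyment-Muchnik, so Sublemma~\ref{sublem:2} applies directly.

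Injectivity transfers both properties back to $\mathscr{B}$. Suppose $0=a\wedge b$ with $a,b>0$. Then $\mathbf{0}_D=f(a)\wedge f(b)$ with $f(a),f(b)>\mathbf{0}_D$, a contradiction. The join case is dual.

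\emph{Sufficiency.} This is the main part, and I would follow the pattern of Corollary~\ref{cor:all} together with the Skvortsova/Sorbi representation. Let $\mathscr{B}$ be finite with $0$ meet-irreducible and $1$ join-irreducible.

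\begin{enumerate}
\item Represent $\mathscr{B}$ by its meet-irreducible elements. The set $M$ of meet-irreducible elements other than $1$ is a finite poset, and every $b\in\mathscr{B}$ is the meet of the meet-irreducibles above it. Equivalently, by finite duality, $\mathscr{B}$ is isomorphic to the algebra $\up(\mathscr{P})$ of upsets of a finite poset $\mathscr{P}$, ordered by reverse inclusion. Meet-irreducibility of $0$ means $\mathscr{P}$ has a least element $\bot$. Join-irreducibility of $1=\emptyset$ means that $\mathscr{P}\smallsetminus\{\bot\}$ carries the non-degenerate structure.
\item Realize $\mathscr{P}$ inside $\mathscr{P}_e$. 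I want partial functions $\{\psi_p: p\in\mathscr{P}\}$, with $\psi_\bot$ partial computable, and a downwards $\leq_e$-closed countable class $\mathcal{A}$ such that:
\begin{itemize}
\item $p\leq q$ if and only if $\psi_p\leq_e\psi_q$;
\item if $p,q$ have no common upper bound in $\mathscr{P}$, then $\psi_p\oplus\psi_q\notin\mathcal{A}$;
\item more precisely, the upsets of $\langle\{\psi_p\}_{d\leq_e}\cap\mathcal{A},\leq_e\rangle$ admit the finite poset $\mathscr{P}$ as a $p$-morphic image.
\end{itemize}
A concrete way to get this is to take total functions forming an independent family, as in Lemma~\ref{lem:main22}, and use joins of columns to code the upsets. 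The class $\mathcal{A}$ is then the complement of the upward closure of a jump, as in the proof of Theorem~\ref{thm:main2}. Only finitely many functions are needed, so existence is easy.
\item Define the map. For an upset $U$, set $\gamma(U)=[\mathcal{A}^c\cup\{\psi_p:p\in U\}_{u\leq_e}]_D$, following the $\alpha$ of Definition~\ref{def:alpha}. Each value is the degree of a Dyment-Muchnik mass problem. Lemma~\ref{lem:can1} and Lemma~\ref{lem:canonical-new} then show that $\gamma$ preserves $\vee$, $\to$ and order, exactly as in the final computation of Corollary~\ref{cor:all}.
\item Fix the endpoints. The map built so far lands in an interval $[[\alpha(I)]_D,[\alpha(\emptyset)]_D]$, but a Brouwer embedding into $\Dym$ must send $0\mapsto\mathbf{0}_D$ and $1\mapsto\mathbf{1}_D$. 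For $0$, use $\psi_\bot$ computable, so the image of the full upset is $\mathbf{0}_D$. For $1$, send it to $\emptyset$ rather than to $\mathcal{A}^c$. This is legitimate because $1$ is join-irreducible: the elements below $1$ form a sub-Brouwer algebra with a new top element adjoined. The implication $a\to 1$ equals $1$ for $a<1$ in $\mathscr{B}$, and the same identity holds in $\Dym$ for non-top elements, by the computation of $\neg$ in the proof of the $\JAN$ theorem.
\end{enumerate}

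The hard step is step 2 together with the meet-preservation it must guarantee. Genuine meets of $\mathscr{B}$, as opposed to meets of canonical generators, have to be mirrored by $\wedge$ in $\Dym$. Since Dyment-Muchnik degrees are meet-irreducible, I would instead embed $\mathscr{B}$ as the free-distributive-lattice-style meet-closure of canonical elements. So I would choose $\mathscr{P}$ as the poset of meet-irreducibles and check, via Lemma~\ref{lem:canonical-new}(1), that the order relations among meets of their images match those in $\mathscr{B}$.
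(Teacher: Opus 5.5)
Your necessity argument is correct; it is the direction the paper calls trivial. The sufficiency part has a genuine gap, located where you flag it at the end. In step~3, $\gamma$ takes only Dyment--Muchnik values, and by Sublemma~\ref{sublem:2} these are meet-irreducible in $\Dym$. So $\gamma$ cannot preserve $\wedge$ unless $\mathscr{B}$ is a chain. A surjective $p$-morphism onto $\mathscr{P}$, as in step~2, would at best give a Brouwer embedding into $\up(\mathscr{P}_e)\simeq\mathscr{M}_{D,w}$ (Theorem~\ref{thm:iso-usl-muchnik}). The passage $\mathcal{X}\mapsto[\mathcal{X}]_D$ into $\Dym$ preserves $\vee$ and $\to$ on such problems, but not $\wedge$. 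There are also two smaller problems. Join-irreducibility of $1$ dualizes to $\mathscr{P}$ having a greatest element, so your ``no common upper bound'' clause is vacuous. And the realization claimed in step~2 is asserted rather than proved. Your step~4, splitting off the top, is sound.

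The repair in your last paragraph also fails in general: you send the meet-irreducibles of $\mathscr{B}$ to canonical degrees and extend by meets. By Sublemma~\ref{sublem:3}, Dyment--Muchnik degrees are closed under $\vee$ and $\to$, and they are meet-irreducible. So $\gamma(m\to m')=\gamma(m)\to\gamma(m')$ would be meet-irreducible. An injective lattice homomorphism sends meet-reducible elements to meet-reducible ones, so this forces $m\to m'$ to be meet-irreducible in $\mathscr{B}$; the same holds for $m\vee m'$. This already fails for $\mathscr{B}=\mathbf{2}^3$ with a new least and a new greatest element adjoined, where $0$ is meet-irreducible and $1$ is join-irreducible. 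Let $t$ be the top of the Boolean part and $m_1,m_2,m_3$ its coatoms; all four are meet-irreducible in $\mathscr{B}$. Yet $m_1\to t=m_2\wedge m_3$ with $m_2,m_3>m_1\to t$.

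So some meet-irreducible elements of $\mathscr{B}$ must be sent to meets of canonical degrees. What is missing is an algebraic step allowing this, for instance an embedding of $\mathscr{B}$ below its top into a free distributive lattice over a finite implicative upper semilattice whose generators are closed under $\vee$ and $\to$, with that semilattice realized inside the canonical set of Lemma~\ref{lem:can1}. The paper does not rebuild this. It imports Sorbi's purely algebraic argument for the Medvedev lattice, whose only computability input is a family of canonical degrees that are also join-irreducible. Here that family is the Dyment--Muchnik problems $\mathcal{B}_\psi=\{\phi:\psi\nleq_e\phi\}$, and the rest carries over unchanged.
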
 
\begin{proof} (Sketch) In one direction (i.e., to show that every finite 
Brouwer algebra embeddable in $\Dym$ has meet-irreducible $0$ and 
join-irreducible $1$) the claim is trivial since $\Dym$ has meet-irreducible 
$0$ and join-irreducible $1$. For the other direction, we observe that the 
proof given in \cite{Sorbi:Embedding} for the Medvedev lattice makes use of 
Medvedev mass problems of the form $\mathcal{B}_f=\{g: g\nleq_T f\}$ and is 
based on the fact that the degrees of these mass problems are 
join-irreducible and form a canonical set. But this is true also for the 
Dyment mass problems of the form $\mathcal{B}_\phi=\{\phi: \psi \nleq_e 
\phi\}$. Indeed, checking join-irreducibility in this case is 
straightforward, and these mass problems are Dyment–Muchnik mass problems, so 
Lemma~\ref{lem:can1} applies to them. The remainder of the proof in 
\cite{Sorbi:Embedding} is purely algebraic and carries over unchanged to the 
setting of the Dyment lattice. 
\end{proof}


\end{document}